\numberwithin{equation}{section}
\newcommand{\1}{\mathds{1}}
\newcommand{\K}{\mathds{K}}
\newcommand{\R}{\mathds{R}}
\newcommand{\C}{\mathds{C}}
\newcommand{\N}{\mathds{N}}
\newcommand{\A}{\mathcal{A}}
\newcommand{\B}{\mathcal{B}}
\newcommand{\G}{\mathcal{G}}
\newcommand{\D}{\mathcal{D}}
\newcommand{\V}{\mathcal{V}}
\newcommand{\fra}{\mathfrak{a}}
\newcommand{\frb}{\mathfrak{b}}
\newcommand\norm[1]{\lVert #1 \rVert}
\newcommand\abs[1]{\lvert #1 \rvert}
\theoremstyle{plain}
\newtheorem{theorem}{Theorem}[section]
\newtheorem{proposition}[theorem]{Proposition}
\newtheorem{lemma}[theorem]{Lemma}
\newtheorem{corollary}[theorem]{Corollary}
\theoremstyle{remark}
\newtheorem{remark}[theorem]{Remark}
\title{Non-Autonomous Maximal Regularity in Hilbert Spaces}
\author{Dominik Dier, Rico Zacher}
\begin{document}

\maketitle

\begin{abstract}\label{abstract}
\noindent We consider  non-autonomous evolutionary problems of the form
\[
u' (t)+A(t)u(t)=f(t), \quad u(0)=u_0,
\]
on $L^2([0,T];H)$, where $H$ is a Hilbert space.
We do not assume that the domain of the operator $A(t)$ is constant in time $t$, 
but that $A(t)$ is associated with a sesquilinear form $\fra(t)$.
Under sufficient time regularity of the forms $\fra(t)$ we prove well-posedness with maximal regularity in $L^2([0,T];H)$.
Our regularity assumption is significantly weaker than those from previous results 
inasmuch as we only require a fractional Sobolev regularity with arbitrary small Sobolev index.
\end{abstract}

\bigskip
\noindent  
\textbf{Key words:} Sesquilinear forms, non-autonomous evolution equations, maximal regularity.\medskip

\noindent
\textbf{MSC:} 35K90, 35K50, 35K45, 47D06.

\section{Introduction}

Let $\K$ be the field $\R$ or $\C$ and let $V$ and $H$ be Hilbert spaces over the field $\K$ such that $V \overset d \hookrightarrow H$; i.e., $V$ is continuously and densely embedded in $H$. 
Then $H \overset d \hookrightarrow V'$ via $v\mapsto (v \, \vert \, \cdot )_H$, where $V'$ denotes the antidual (or dual if $\K=\R$) of $V$.
Let $I:=[0,T]$ where $T>0$.
Suppose $\fra \colon I \times V \times V \to \K$ is a bounded quasi-coercive non-autonomous form;
i.e., $\fra(t, \cdot, \cdot)$ is sesquilinear for all $t \in I$, $\fra(\cdot,v,w)$ is measurable for all $v,w \in V$,
and there exist constants $M, \eta >0$ such that
\begin{align*}
	&\abs{\fra(t,v,w)} \le M \norm{v}_V \norm{w}_V  &(t \in I,\ v,w \in V),\\
	&\operatorname{Re} \fra(t,v,v) \ge \eta \norm{v}_V^2 - M \norm{v}_H^2 &(t \in I,\ v \in V).
\end{align*}
We define the operator $\A(t) \in \mathcal{L}(V,V')$ by $\A(t)v := \fra(t,v,\cdot)$ and the operator $A(t) \colon D(A(t)) \to H$ by $D(A(t)) := \{v \in V: \A(t)v \in H \}$, $A(t)v := \A(t)v$
for all $t \in I$.

A famous result due to J.~L.~Lions (see \cite[p.\ 513]{DL92}) states that the non-autonomous Cauchy problem
\begin{equation}\label{eq:CP}
	u'+ \A(\cdot) u(\cdot) = f, \quad u(0) = u_0
\end{equation}
is \emph{well-posed with maximal regularity in $V'$} and the \emph{trace space} is $H$;
i.e., for every $f \in L^2(I;V')$ and $u_0\in H$ there exists a unique $u \in H^1(I;V') \cap L^2(I;V) \hookrightarrow C(I;H)$ that solves \eqref{eq:CP}.

We say that $\fra$ has $H$ \emph{maximal regularity}  if 
for all $f \in L^2(I;H)$ and $u_0=0$ the solution $u$ of \eqref{eq:CP} is in $H^1(I;H)$,
and consequently in
\[
	\textit{MR} := \{ u \in L^2(I;V) \cap H^1(I;H) : \A u \in L^2(I;H) \}.
\]
It is easy to see that if $\fra$ has $H$ maximal regularity, then the solution $u$ of \eqref{eq:CP} is in $H^1(I;H)$ for every $f \in L^2(I;H)$ and $u_0 \in \textit{Tr}$,
where the trace space $\textit{Tr}$ is defined by $\textit{Tr} = \{v(0) : v \in \textit{MR} \}$.

The problem of non-autonomous $H$ maximal regularity has been studied extensively in the literature.
In the autonomous case, i.e.\ $\fra(\cdot,v,w)$ is constant for every $v,w \in V$, additional regularity of the inhomogeneity $f$ and the initial value $u_0$ leads to higher regularity of the solution $u$.
In particular, it is known that one has maximal regularity in $H$ with $\textit{Tr} = D(A(0)^{1/2})$.
As shown recently in \cite[p.~36]{Die14}, the property of $H$ maximal regularity fails in general in the non-autonomous case, that is without further regularity assumptions on the form $\fra$.
If the form is additionally \emph{symmetric}, i.e.\ $\fra(t,v,w) = \overline{\fra(t,w,v)}$ for all $t \in I$, $v,w \in V$,
the problem of $H$ maximal regularity was explicitly asked by Lions and is still open (see \cite[p.\ 68]{Lio61}).

Lions himself proved $H$ maximal regularity if $\fra$ is symmetric and $\A(\cdot) \in C^1(I;\mathcal{L}(V;V'))$ (see \cite[p.\ 65]{Lio61}).
Using a different approach, $H$ maximal regularity was established in \cite{OS10}, assuming that $\A(\cdot) \in C^{\alpha}(I; \mathcal{L}(V,V'))$ for some $\alpha > 1/2$, without symmetry assumption.
This result was further improved in \cite{HO14}, where the aforementioned Hölder condition is replaced by a weaker ``Dini'' condition for $\fra$, which can be viewed as a generalization of the Hölder condition above to the limiting case $\alpha = 1/2$.
Moreover, they established $L^p(I;H)$ maximal regularity for $1<p<\infty$.

Lions' result was recently generalized in another direction in \cite{Die15}.
Assume in addition that $\fra$ is symmetric and of \emph{bounded variation}; i.e., there exists a bounded and non-decreasing function $g \colon I \to \R$ such that
\begin{equation*}
	\abs{\fra(t,v,w)- \fra(s,v,w)} \le [g(t)-g(s)] \norm{v}_V \norm{w}_V \quad (0\le s \le t \le T,\ v,w \in V).
\end{equation*}
Then $\fra$ has maximal regularity in $H$ with $\textit{Tr}= V$, and $\textit{MR}$ is continuously embedded in $C(I;V)$.

More recent further contributions to maximal regularity for non-autonomous problems are \cite{ADO14}, \cite{ADLO14}, \cite{ACFP07}, \cite{SP01}, \cite{Ama04}.

The main contribution of the present article is a general result on higher regularity of solutions to the non-autonomous problem \eqref{eq:CP}, see Theorem~\ref{thm:MRinRforformsonI} below.
As a special case it contains the following result on $H$ maximal regularity.
\begin{corollary}\label{cor:intro}
	Suppose that in addition $\A(\cdot)$ belongs to the homogeneous fractional Sobolev space $\mathring W^{1/2+\delta, 2}(I; \mathcal{L}(V,V'))$ for some $\delta>0$; i.e.,
	\[
		\int_I\int_I \frac{\norm{\A(t)-\A(s)}_{\mathcal{L}(V,V')}^2}{\abs{t-s}^{2+2\delta}}  \  \mathrm{d}{t}\ \mathrm{d}{s} < \infty.
	\]
	Then \eqref{eq:CP} has $H$ maximal regularity with $\textit{Tr} = D(A(0)^{1/2})$. Moreover, $\textit{MR}$ embeds continuously in $H^{1/2}(I;V)$.
\end{corollary}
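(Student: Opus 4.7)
The plan is a freezing-of-coefficients argument combined with a fixed-point iteration. Lions' theorem immediately gives a unique solution $u \in H^1(I;V') \cap L^2(I;V)$ for $f \in L^2(I;H) \subset L^2(I;V')$ and $u_0 \in D(A(0)^{1/2}) \subset H$; the task is to upgrade this to $u \in H^{1/2}(I;V) \cap H^1(I;H)$ with $\A(\cdot)u(\cdot) \in L^2(I;H)$. Setting $B := \A(0)$, I would rewrite \eqref{eq:CP} as
\begin{equation*}
	u' + Bu = f + [B - \A(\cdot)]u,\quad u(0) = u_0.
\end{equation*}
Since $B$ is autonomous with $u_0 \in D(B^{1/2})$, the solution operator $g \mapsto v$ associated to $v' + Bv = g$, $v(0) = u_0$ is bounded from $L^2(I;H)$ into the autonomous maximal regularity space. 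Defining $\Phi \colon w \mapsto v$ with $v' + Bv = f + [B-\A(\cdot)]w$, $v(0) = u_0$, the plan is to show $\Phi$ is a contraction on an appropriate closed subspace of $\textit{MR}$; localizing to a short subinterval is what produces the small coupling constant, since the fractional Sobolev seminorm of $\A$ is absolutely continuous on $I$, and solutions are then patched across finitely many subintervals of $I$.

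The analytic heart of the argument is the estimate
\begin{equation*}
	\bigl\| [\A(\cdot) - B]\, w(\cdot) \bigr\|_{L^2(I;H)} \le C \, \|w\|_{\textit{MR}},
\end{equation*}
which has to transfer the only $\mathcal{L}(V,V')$-valued fractional regularity of $\A$ into an $H$-valued $L^2$ bound. I would decompose
\begin{equation*}
	[\A(t)-\A(s)]w(t) = [\A(t)-\A(s)][w(t)-w(s)] + [\A(t)-\A(s)]w(s),
\end{equation*}
average in $s$ against the kernel $|t-s|^{-2-2\delta}$ entering the Slobodeckij seminorm of $\A$, and use the interpolated time regularity of $w$ (stemming from $w \in L^2(I;V)$ and $w' \in L^2(I;H)$) to furnish the extra half-derivative needed to balance the Sobolev index $1/2+\delta$ of $\A$. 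The continuous embedding $\textit{MR} \hookrightarrow H^{1/2}(I;V)$ promised by the corollary would be obtained en route, as a by-product of the same Slobodeckij-seminorm computation applied to $w$ itself rather than to $\A$.

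The main obstacle—and the reason $\delta > 0$ cannot be dropped—is precisely this $V' \to H$ lift: no pointwise-in-$t$ bound is available, so one must use essentially the joint time regularity of $w$ and the fractional regularity of $\A$, with the strict inequality $\delta > 0$ producing an absolutely convergent integral in a Hardy-type step. This is exactly the mechanism that bridges the gap with the Hölder results of \cite{OS10}, where $\alpha > 1/2$ supplies a pointwise operator-norm bound in place of an $L^2$ double integral. Once the displayed perturbation estimate is in hand, the Banach fixed-point theorem on short intervals, autonomous maximal regularity for $B$, and a patching step finish the proof.
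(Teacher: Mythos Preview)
Your freezing-of-coefficients plan has a structural gap: the perturbation $\A(t)-\A(0)$ is only known to lie in $\mathcal L(V,V')$, and nothing in the hypotheses promotes it to an operator with range in $H$. For $w$ in the autonomous maximal regularity class $L^2(I;D(A(0)))\cap H^1(I;H)$ one has $Bw\in L^2(I;H)$, but $\A(\cdot)w$ is a priori only in $L^2(I;V')$ because $w(t)\in D(A(0))$ gives no information about membership in the (possibly different) domain $D(A(t))$. Hence the right-hand side $f+[B-\A(\cdot)]w$ need not belong to $L^2(I;H)$, and $\Phi$ is not even well defined as a map into the autonomous MR space. Your decomposition of $[\A(t)-\A(s)]w(t)$ and the averaging in $s$ against the Slobodeckij kernel does not repair this: integrating in $s$ produces (a bound for) a Gagliardo-type seminorm of $t\mapsto\A(t)w(t)$, not the quantity $\|[\A(\cdot)-\A(0)]w\|_{L^2(I;H)}$ you actually need; the frozen value $s=0$ simply does not emerge from integrating a singular kernel over $s$. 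This $V'\to H$ lift is precisely the obstruction created by non-constant domains, and it cannot be overcome by a pointwise or averaged operator-norm argument.

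The paper avoids this obstruction by an entirely different mechanism. Instead of perturbing around a frozen operator, it sets up a sesquilinear form on $H^{1}(\R;H)\cap H^{1/2}(\R;V)$ and applies Lax--Milgram; coercivity is obtained by testing the equation against $(1-\delta\mathcal H)u'$, where $\mathcal H$ is the Hilbert transform, which after Plancherel produces the term $\operatorname{Re}\int_\R\langle\partial^{1/2}\A u,\partial^{1/2}u\rangle\,\mathrm{d}t$. Your Slobodeckij decomposition is exactly the right tool at \emph{that} stage, but for a different purpose: it controls the commutator between $\partial^{1/2}$ and multiplication by $\A(\cdot)$ and yields the lower bound $\operatorname{Re}\int\langle\partial^{1/2}\A u,\partial^{1/2}u\rangle\ge\eta_1\|\partial^{1/2}u\|_{L^2_V}^2-M\|u\|_{L^2_V}^2$ (this is Lemma~\ref{lem:comest} feeding into condition~\eqref{eq:alphaA1coercive}). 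So the fractional regularity of $\A$ is spent on a commutator inside a bilinear-form estimate, never on a pointwise $L^2(I;H)$ bound for $[\A(\cdot)-\A(0)]w$.
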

Note that $W^{1/2+\delta, 2}(I; \mathcal{L}(V,V')) \hookrightarrow C(I; \mathcal{L}(V,V'))$ and we identify $\A(\cdot)$ with its continuous version.

This result closes the gap between the Hölder and the bounded variation assumption on the form $\fra$ in the following sense:
It holds $C^{1/2+\delta}(I;\mathcal{L}(V;V')) \hookrightarrow \mathring W^{1/2+\delta, 2}(I; \mathcal{L}(V,V'))$.
Moreover, without symmetry of the form, by the counterexample mentioned above, bounded variation does not suffice for $H$ maximal regularity.
However, if we replace this assumption by the slightly stronger assumption $\A \in \mathring W^{1,1+\delta}(I;\mathcal{L}(V,V'))$, for some $\delta >0$,
we also obtain that $\mathring W^{1,1+\delta}(I;\mathcal{L}(V,V')) \hookrightarrow \mathring W^{1/2+\delta', 2}(I; \mathcal{L}(V,V'))$ with $\delta' = \frac\delta{1+\delta}$.
Thus Corollary~\ref{cor:intro} applies and yields $H$ maximal regularity.

We like to point out, that Theorem~\ref{thm:MRinRforformsonI} does not only treat the case of $H$ maximal regularity, but
covers the whole range of complex interpolation spaces $[H,V']_{1-2\alpha}$, where $\alpha \in (0,1/2]$.
Maximal regularity with respect to this space will be obtained under the additional assumption that $\A(\cdot)$ is in $\mathring W^{\alpha+\delta, 1/\alpha}(I; \mathcal{L}(V,V'))$ for some $\delta>0$.

Further, we investigate perturbations of lower order as in \cite{AM14}, \cite{Ouh14}.
As an application they treat non-autonomous Robin boundary conditions.
Again we significantly relax the regularity assumption from a Hölder condition to a fractional Sobolev space condition.

Our approach relies on elementary Hilbert space methods, such as the Lax--Milgram lemma and Plancherel's theorem.
A key idea in the proof of Corollary~\ref{cor:intro} is to test equation \eqref{eq:CP} 
not only with $u$ and $u'$ but also with $\mathcal H u'$, where $\mathcal H$ denotes the Hilbert transform.
This is crucial to obtain a bound for the $H^{1/2}(\R;V)$ norm of $u$.

The present article is organized as follows.
Section~2 is of preliminary character. We provide some well known results about fractional powers of operators associated with forms and complex interpolation spaces.
Section~3 is concerned with abstract maximal regularity results on $I=\R$. Here we discuss conditions on operators 
$\A \in \mathcal{L}(L^2(\R;V), L^2(\R;V'))$ such that the Cauchy problem of the form $u'+\A u =f$ is well posed with maximal regularity in $V'$, $H$ and in the spaces `in between'.
Section~4 is devoted to non-autonomous forms and their associated operators.
In Section~5 we apply our abstract maximal regularity result of Section~3 to non-autonomous forms on $I=\R$, and in 
Section~6 we treat initial value problems by reducing them to the situation of Section~5.
In Section~7 we illustrate our results from Section~6 with applications to parabolic problems in divergence form (scalar equations and systems)
and to problems related to generalized fractional Laplacians.
Finally, in the appendix we collect some facts about Banach space valued fractional Sobolev spaces on the real line.
\section{Interpolation of the Gelfand triple}
Let $V$ and $H$ be Hilbert spaces over the field $\K$,
such that $V \overset d \hookrightarrow H$; i.e., $V$ is continuously and densely embedded in $H$. 
Then there exists a constant $c_H$ such that
\begin{equation}\label{eq:VinH}
	\norm{v}_H \le c_H \norm{v}_V \quad (v \in V).
\end{equation}
We denote by $V'$ the antidual (or dual if $\K=\R$) of $V$. Furthermore, we embed $H$ into $V'$ by the mapping
\begin{equation}\label{eq:embedding}
	j\colon v\mapsto (v \, \vert \, \cdot )_H.
\end{equation}
Then $(u \, \vert \, v)_H = \langle j(u), v \rangle$ for all $u \in H$ and $v \in V,$ where $\langle \cdot , \cdot \rangle$ denotes the duality pairing between $V'$ and $V$.
Moreover, $H$ is dense in $V'$ and 
\[
	\norm{j(u)}_{V'} \le c_H \norm u_H \quad (u \in H),
\]
where $c_H$ is the same constant as in \eqref{eq:VinH}.
It is convenient to identify $V$ and $H$ as subspaces of $V'$.
This means with respect to \eqref{eq:embedding} that we identify $u \in H$ with $j(u) \in V'$.

We define $\B \in \mathcal{L}(V,V')$ by $\B v = (v\, \vert \, \cdot)_V$.
Note that this is the associated operator of the scalar product in $V$.
Since this is a symmetric and coercive sesquilinear form $\B$ is invertible, defines a sectorial operator on $V'$ and
the part $B$ in $H$ of $\B$ defines a self adjoint operator (see \cite[p.~15]{Ouh05}).

We define the Hilbert space $H_\gamma$, where $\gamma \in [-1,1]$ by
\[
	H_\gamma:= \{ v \in D(\mathcal B^{\gamma/2}) : \B^{\gamma/2} v \in H \},\quad \norm{v}_{H_\gamma} := \norm{\B^{\gamma/2} v}_H,
\]
where $\B^{\gamma/2}$ denotes the fractional power of the sectorial operator $\B$ (see e.g.\ \cite[p.\ 163]{ABHN11}).

\begin{proposition}\label{prop:complexinterpolation}
	If $\gamma \in [0,1]$, then $H_\gamma = [H,V]_\gamma$ and $H_{-\gamma} = [H, V']_\gamma$,
	where we denote by $[\cdot, \cdot]_\gamma$ the complex interpolation space of order $\gamma$.
	In particular $H_{-1}=V'$, $H_0 =H$ and $H_1=V$.
\end{proposition}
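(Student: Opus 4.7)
The plan is to identify $H_\gamma$ for $\gamma\in[0,1]$ with the domain $D(B^{\gamma/2})$ of a fractional power of the self-adjoint operator $B$ on $H$, apply the classical spectral-theoretic description of complex interpolation between $H$ and $D(B^{1/2})$, and pass to negative exponents via Calderón's duality theorem.

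Since $\B$ is associated with the symmetric coercive form $(\cdot\,\vert\,\cdot)_V$, its part $B$ in $H$ is self-adjoint and strictly positive; the first representation theorem gives $D(B^{1/2})=V$ with $\norm{B^{1/2}v}_H=\norm{v}_V$. The first technical step is to verify that for $\gamma\in[0,1]$ the part of $\B^{\gamma/2}$ in $H$ is precisely $B^{\gamma/2}$. Because $\B^{-1}$ and $B^{-1}$ agree on $H$ (both invert $B$ there), a Balakrishnan-type representation forces $\B^{-s}=B^{-s}$ on $H$ for every $s\ge 0$; and whenever $\B^{\gamma/2}v\in H$ one has $v=\B^{-\gamma/2}(\B^{\gamma/2}v)\in H$, since $\B^{-\gamma/2}$ sends $H$ into $H$, interpolating between the identity on $H$ at $\gamma=0$ and $\B^{-1/2}=B^{-1/2}\colon H\to V\hookrightarrow H$ at $\gamma=1$. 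Hence $H_\gamma=D(B^{\gamma/2})$ with equal norms. The standard result for positive self-adjoint operators, proved via the analytic family $\{B^{iz}:z\in\C\}$ of isometries provided by the bounded $H^\infty$ functional calculus, then yields $[H,D(B^{1/2})]_\gamma=D(B^{\gamma/2})$; combining with $D(B^{1/2})=V$ we obtain $[H,V]_\gamma=H_\gamma$.

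For $\gamma\in(0,1]$ a parallel argument shows that $H_{-\gamma}$ is the completion of $H$ under the norm $v\mapsto\norm{B^{-\gamma/2}v}_H$, and that the $H$-pairing extends to an isometric antiduality between $H_{-\gamma}$ and $H_\gamma$; self-adjointness of $B$ gives directly $\sup\{\abs{(v\,\vert\,w)_H}:\norm{w}_{H_\gamma}\le 1\}=\norm{B^{-\gamma/2}v}_H$. Calderón's duality theorem for reflexive compatible couples then yields $[H,V']_\gamma=([H,V]_\gamma)'=H_\gamma'=H_{-\gamma}$, after identifying $H$ with its antidual via the Riesz map. The boundary identifications $H_{-1}=V'$, $H_0=H$, $H_1=V$ are immediate from the preceding observations about $\B^{\pm 1/2}$.

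The main subtlety, and the principal obstacle I expect, is the compatibility claim that the part of $\B^{\gamma/2}$ in $H$ equals $B^{\gamma/2}$: the operator $\B$ lives on $V'$ while $B$ lives on $H$, so one must show that the fractional power construction commutes with passage to the part in $H$. Once this point is settled, the remainder reduces to the standard interpolation theory of a single positive self-adjoint operator combined with Calderón duality.
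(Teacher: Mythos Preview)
Your argument is correct and shares the same core idea as the paper---both rely on the fact that the self-adjoint operator $B$ has bounded imaginary powers, so complex interpolation between $H$ and $D(B^{1/2})=V$ gives the fractional domains. The execution, however, differs. You work entirely with $B$ on $H$: you first establish the compatibility $H_\gamma=D(B^{\gamma/2})$ for $\gamma\in[0,1]$ (correctly identifying this as the delicate point), interpolate, and then reach the negative scale by Calder\'on duality $[H,V']_\gamma=([H,V]_\gamma)'=H_{-\gamma}$. The paper instead transfers bounded imaginary powers from $B$ to $\B$ on $V'$ via the similarity $\B=\B^{1/2}B\,\B^{-1/2}$, and then a single application of the BIP--interpolation theorem on the ambient space $V'$ yields the entire scale $D(\B^\theta)=[V',V]_\theta$ at once; reiteration then gives both $H_\gamma=[H,V]_\gamma$ and $H_{-\gamma}=[H,V']_\gamma$ without a separate duality step. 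Your route is slightly longer but more self-contained spectrally; the paper's route is shorter and sidesteps the compatibility issue you flag, at the cost of the similarity observation for $\B$.
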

\begin{proof}
	Let $v\in D(B)$. Since $B$ is self adjoint, we have $\norm{B^{1/2}v}^2_H = (Bv \, \vert \, v)_H = \norm{v}_V^2$.
	Thus $D(B^{1/2}) =V$ and since $\B^{-1/2} = B^{1/2} \B^{-1}$ we also have  $D(\B^{1/2})=H$.
	Since $B$ is self adjoint it has bounded imaginary powers.
	It follows that $\B= \B^{1/2} B B^{-1/2}$ has also bounded imaginary powers.
	Thus the claim follows by \cite[Theorem~4.17]{Lun09}.
\end{proof}

Let $\fra \colon V \times V \to \K$ be a sesquilinear form.
Moreover, we assume that $\fra$ is \emph{bounded}, i.e.\ there exists some $M\ge 0$ such that
\begin{equation*}
	\abs{\fra(v,w)} \le M \norm{v}_V \norm{w}_V \quad (v,w \in V)
\end{equation*}
and \emph{coercive}, i.e. there exists $\eta>0$ such that
\begin{equation*}
	\operatorname{Re} \fra(v,v) \ge \eta \norm{v}_V^2 \quad (v \in V).
\end{equation*}
Let $\A \in \mathcal{L}(V,V')$, $\A v= \fra(v, \cdot)$ be its \emph{associated operator} and $A \in \mathcal{L}(D(A),H)$, $D(A) = \{v \in V : \A v \in H\}$ its part in $H$.
Note that $\A$ defines a sectorial operator on $V'$.
\begin{proposition}\label{prop:trace}
	Let $\alpha \in (-1/2, 1/2)$.
	Then there exist constants $c,C>0$ depending only on $\alpha$, $M$ and $\eta$ such that
	\begin{equation*}
		c \norm{v}_{H_{2\alpha}} \le \norm{\A^\alpha v}_H \le C \norm{v}_{H_{2\alpha}} \quad (v \in H_{2\alpha}).
	\end{equation*}
\end{proposition}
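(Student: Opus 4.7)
The plan is to reduce the claimed norm equivalence to Proposition~\ref{prop:complexinterpolation} combined with the fact that $\A$, like $\B$, has bounded imaginary powers (BIP) on $V'$. The two key ingredients will be the identity $D(\A^{1/2}) = H$ with $\A^{1/2}\colon H \to V'$ an isomorphism, and the composition rule $\A^{\alpha+1/2} = \A^{1/2} \A^{\alpha}$ for fractional powers of the sectorial invertible operator $\A$.

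The first step is to establish that $\A$ has BIP on $V'$, in analogy with the argument used for $\B$ in Proposition~\ref{prop:complexinterpolation}. A clean route is to introduce the operator $T := \B^{-1/2} \A B^{-1/2}$ on $H$, where the inner $B^{-1/2}\colon H \to V$ is the inverse of the Kato square root of the self-adjoint $B$, and the outer $\B^{-1/2}\colon V' \to H$ is the inverse of the extension $\B^{1/2}\colon H \to V'$. Using the duality between $B^{1/2}$ and $\B^{-1/2}$ together with $\norm{B^{-1/2} h}_V = \norm{h}_H$, one checks that $(Th\mid h)_H = \fra(B^{-1/2} h, B^{-1/2} h)$, so boundedness and coercivity of $\fra$ confine the numerical range of $T$ to a sector of angle strictly less than $\pi/2$. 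By the classical BIP estimate for accretive operators on Hilbert space, $T$ has BIP on $H$, and via the factorization $\A = \B^{1/2} T B^{1/2}$ together with the BIP of $\B$ already obtained in Proposition~\ref{prop:complexinterpolation}, this transfers to $\A$.

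Once BIP of $\A$ is available, \cite[Theorem~4.17]{Lun09} yields $D(\A^\beta) = [V', V]_\beta$ for $\beta \in [0,1]$ with equivalent norms $\norm{\A^\beta v}_{V'} \sim \norm{v}_{[V', V]_\beta}$ (the graph norm collapses to the seminorm because $\A$ is invertible). Reiteration of complex interpolation within the scale $H_\gamma = D(B^{\gamma/2})$ of powers of the positive self-adjoint operator $B$, combined with Proposition~\ref{prop:complexinterpolation}, identifies $[V', V]_\beta = H_{2\beta - 1}$. For $\alpha \in (-1/2, 1/2)$, setting $\beta := \alpha + 1/2 \in (0,1)$ gives $D(\A^\beta) = H_{2\alpha}$ and $\norm{\A^\beta v}_{V'} \sim \norm{v}_{H_{2\alpha}}$. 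The composition rule for fractional powers of a sectorial invertible operator then shows $\A^\beta v = \A^{1/2}(\A^\alpha v)$ with $\A^\alpha v \in D(\A^{1/2}) = H$; since $\A^{1/2}\colon H \to V'$ is an isomorphism, one obtains
$$\norm{v}_{H_{2\alpha}} \sim \norm{\A^\beta v}_{V'} = \norm{\A^{1/2}(\A^\alpha v)}_{V'} \sim \norm{\A^\alpha v}_H,$$
which gives both inequalities at once.

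The main obstacle is Step~1: because $\A$ is not self-adjoint, the direct similarity argument used for $\B$ does not apply, and one must invoke a BIP result for sectorial accretive operators on Hilbert space. The remaining steps are routine applications of the interpolation machinery, although some care is needed for the range $\alpha \in (-1/2, 0)$, where $\A^\alpha$ is already bounded on $V'$ and $H_{2\alpha} \supset H$, so that the composition rule must be applied to a bounded and an unbounded factor.
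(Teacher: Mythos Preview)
Your approach is genuinely different from the paper's, and the main step has a gap.

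The paper does not go through BIP at all. For $\alpha\in[0,\tfrac12)$ it simply invokes Kato's classical result \cite[Theorem~3.1]{Kat61}, which gives directly that $D(A^\alpha)=H_{2\alpha}$ with the norm equivalence and with constants depending only on $\alpha$, $M$, $\eta$. The range $\alpha\in(-\tfrac12,0)$ is then obtained by a short duality argument: one applies the case $\beta=-\alpha$ to the adjoint form $\fra^*$, and computes $(\A^\alpha v\mid w)_H=(v\mid(\A^*)^{-\alpha}w)_H$ together with the $H_{2\alpha}$--$H_{-2\alpha}$ pairing to transfer the estimates. No interpolation identification of $D(\A^{1/2})$, no BIP of $\A$, and in particular no statement at the endpoint $\alpha=\tfrac12$ is needed.

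Your route can in principle be made to work, but the transfer of BIP from $T$ to $\A$ is not justified as written. The factorisation $\A=\B^{1/2}\,T\,B^{1/2}$ is \emph{not} a similarity: $\B^{1/2}\colon H\to V'$ and $B^{1/2}\colon V\to H$ are not mutual inverses as maps between the same pair of spaces, so there is no isomorphism $S$ with $\A=S\,T\,S^{-1}$. Consequently one cannot conclude $\A^{is}=\B^{1/2}\,T^{is}\,B^{1/2}$ or anything of the sort, and BIP of $T$ does not pass to $\A$ by the argument you indicate. (If instead you conjugate $\A$ by the isomorphism $\B^{1/2}\colon H\to V'$, the resulting operator on $H$ is $\B^{-1/2}\A\,\B^{1/2}$ with domain $D(B)$, whose numerical range you have not controlled.) A correct argument that $\A$ has BIP on $V'$ exists, but it requires additional work; once that is in place your interpolation chain and the use of $D(\A^{1/2})=[V',V]_{1/2}=H$ do give the result. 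Still, this is considerably heavier than the paper's two-line reduction to \cite{Kat61}, and you also need to track that the BIP bound (and hence all constants) depends only on $M$ and $\eta$.
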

\begin{proof}
	For the case $\alpha \in [0, 1/2)$ see \cite[Theorem 3.1]{Kat61}.
	The case $\alpha \in (-1/2, 0)$ follows by a duality argument.
	We define the adjoint sesquilinear form $\fra^* \colon V \times V \to \K$ by $\fra^*(v,w) = \overline{\fra(w,v)}$.
	Then $\fra^*$ is bounded and coercive with the same constants $M$ and $\eta$.
	Moreover, the part in $H$ of the associated operator $\A^*$ of $\fra^*$ is the adjoint operator of $A$ and 
	\begin{equation}\label{eq:Katobeta}
		c \norm{v}_{H_{2\beta}} \le \norm{(\A^*)^\beta v}_H \le C \norm{v}_{H_{2\beta}} \quad (v \in H_{2\beta}).
	\end{equation}
	for $\beta \in [0, 1/2)$, by the first part of the proof.
	Let $v,w \in V$, then we obtain by \eqref{eq:Katobeta} with $\beta=-\alpha$
	\begin{equation*}
		(\A^{\alpha}v \, \vert \, w)_H = (v \, \vert \, (\A^*)^{\alpha} w)_H \le \norm{v}_{H_{-2\alpha}} \norm{(\A^*)^{\alpha} w}_{H_{2\alpha}} \le  \norm{v}_{H_{-2\alpha}} \tfrac 1 c \norm{w}_{H}
	\end{equation*}
	and
	\begin{equation*}
		\langle v , w \rangle_{H_{2\alpha}, H_{-2\alpha}} =( \A^{\alpha} v  \, \vert \, (\A^*)^{-\alpha} w )_H \le \norm{\A^{\alpha} v}_H C \norm{w}_{H_{-2\alpha}}.
	\end{equation*}
	Taking the supremum over all $w \in V$ with $\norm{w}_H \le 1$ in the first inequality and with $\norm{w}_{H_{-2\alpha}} \le 1$ in the second proves the claim.
\end{proof}
\section{Maximal regularity on $\R$}

Let $V$ and $H$ be Hilbert spaces over the field $\K$ such that $V \underset d \hookrightarrow H$.
Furthermore, let $\A \in \mathcal{L}(L^2(\R;V), L^2(\R;V'))$ and set $M:= \norm{\A}$.
Suppose there exists some $\eta>0$ such that
\begin{equation}\label{eq:Acoercive}
	\operatorname{Re} \int_\R \langle \A v, v \rangle \ \mathrm{d}{t} \ge \eta \norm{v}_{L^2_V}^2 \quad (v \in L^2(\R;V)).
\end{equation}
Note that we denote the norm of $L^2(\R;X)$ by $\norm\cdot _{L^2_X}$ for any Hilbert space $X$.
\begin{theorem}\label{thm:weaksolutions}
	For every $f \in L^2(\R;V')$ there exists a unique $u \in \textit{MR}_0(\A):= L^2(\R;V)\cap H^1(\R;V')$ such that
	\begin{equation}\label{eq:weaksolution}
		u'+ \A u = f
	\end{equation}
	in $L^2(\R;V')$. In addition $u \in H^{1/2}(\R;H)$.
\end{theorem}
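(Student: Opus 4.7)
My plan is to prove uniqueness via an energy estimate, existence via Lions's projection theorem applied to a cleverly chosen sesquilinear form, and the additional regularity $u\in H^{1/2}(\R;H)$ via Plancherel's theorem. For the energy estimate I would test the equation against $u$ itself; the embedding $\textit{MR}_0(\A)\hookrightarrow C_0(\R;H)$ (a standard consequence of the identity $[V,V']_{1/2}=H$ coming from Proposition~\ref{prop:complexinterpolation}) guarantees that the boundary contributions at $\pm\infty$ vanish, so that $2\operatorname{Re}\int_\R\langle u',u\rangle\,dt=0$. Combining this with~\eqref{eq:Acoercive} yields
\[
	\eta\norm{u}_{L^2_V}^2 \le \operatorname{Re}\int_\R\langle f,u\rangle\,dt \le \norm{f}_{L^2_{V'}}\norm{u}_{L^2_V},
\]
so $\norm{u}_{L^2_V}\le\eta^{-1}\norm{f}_{L^2_{V'}}$, and then $\norm{u'}_{L^2_{V'}}\le(1+M\eta^{-1})\norm{f}_{L^2_{V'}}$; uniqueness is immediate.

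The existence step is the main content of the theorem. I would set $F=L^2(\R;V)$, take as test space $\Phi=H^1(\R;V)$ equipped with the ambient $F$-norm, and consider the sesquilinear form
\[
	b(u,\phi) := \int_\R \langle\A u,\phi\rangle\,dt - \int_\R (u\,\vert\,\phi')_H\,dt \qquad (u\in F,\ \phi\in\Phi).
\]
The point of this choice is to shift the time derivative off $u$ and onto $\phi$ by a formal integration by parts, so that (in the weak sense) $b(u,\phi)=\int_\R\langle u'+\A u,\phi\rangle\,dt$ for sufficiently regular $u$. Since $\phi\in H^1(\R;V)\hookrightarrow C_0(\R;H)$, we have $2\operatorname{Re}\int_\R(\phi\,\vert\,\phi')_H\,dt=[\norm{\phi}_H^2]_{-\infty}^{+\infty}=0$; combined with~\eqref{eq:Acoercive} this provides the coercivity bound $\abs{b(\phi,\phi)}\ge\eta\norm{\phi}_F^2$ required by Lions's lemma. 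Continuity of $u\mapsto b(u,\phi)$ on $F$ and of the linear form $L(\phi)=\int_\R\langle f,\phi\rangle\,dt$ on $(\Phi,\norm{\cdot}_F)$ are both straightforward. Lions's projection theorem then produces $u\in F$ satisfying $b(u,\phi)=L(\phi)$ for every $\phi\in\Phi$; specializing to $\phi\in C_c^\infty(\R;V)$ and using the identification $H\hookrightarrow V'$ to rewrite $(u\,\vert\,\phi')_H=\langle u,\phi'\rangle$, one reads off $u'=f-\A u$ in $\mathcal{D}'(\R;V')$. Since $f-\A u\in L^2(\R;V')$, this forces $u\in H^1(\R;V')$, hence $u\in\textit{MR}_0(\A)$ solves~\eqref{eq:weaksolution}.

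For the additional regularity $u\in H^{1/2}(\R;H)$ I would use Plancherel's theorem together with the elementary inequality $\norm{v}_H^2=\langle v,v\rangle\le\norm{v}_{V'}\norm{v}_V$ for $v\in V$. Applying this pointwise in $\tau$ to the Fourier transform $\hat u(\tau)\in V$ and invoking Young's inequality yields
\[
	(1+\abs{\tau})\norm{\hat u(\tau)}_H^2 \le C\bigl(\norm{\hat u(\tau)}_V^2+(1+\abs{\tau}^2)\norm{\hat u(\tau)}_{V'}^2\bigr),
\]
so integrating in $\tau$ bounds $\norm{u}_{H^{1/2}(\R;H)}^2$ by a multiple of $\norm{u}_{L^2_V}^2+\norm{u'}_{L^2_{V'}}^2$, which is finite by the previous step. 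The main obstacle is engineering the Lions setup: a naive Lax--Milgram argument on $L^2(\R;V)\times L^2(\R;V)$ cannot work because $\int\langle u'+\A u,v\rangle\,dt$ is not even well defined without $u'\in L^2(\R;V')$, which is precisely what we are trying to prove. Moving the derivative onto the test function in the form $b$ trades this regularity demand on $u$ for one on $\phi$, and the asymmetric variant of Lax--Milgram due to Lions is exactly tailored to this situation; once it is applied, the distributional interpretation of the equation and the $H^{1/2}$ regularity are routine.
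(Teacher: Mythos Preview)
Your argument is correct, but it follows a different route from the paper's.

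The paper works directly on the Hilbert space $\V_0 := H^{1/2}(\R;H)\cap L^2(\R;V)$ and applies the \emph{standard} Lax--Milgram lemma to the form
\[
	E(v,w)=\int_\R(\partial^{1/2}v\mid\partial^{(1/2)*}(1-\delta\mathcal H)w)_H\,\mathrm{d}t+\int_\R\langle\A v,(1-\delta\mathcal H)w\rangle\,\mathrm{d}t,
\]
where $\mathcal H$ is the Hilbert transform and $\delta=\eta/(M+1)$. The point is that the time-derivative part has Fourier symbol $|\xi|(i\operatorname{sign}\xi+\delta)$, whose real part is $\delta|\xi|$; this is what makes $E$ coercive on $\V_0$. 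Existence in $\V_0$, the embedding $\textit{MR}_0(\A)\hookrightarrow\V_0$, and uniqueness then all fall out of the single Lax--Milgram identity. By contrast, you use Lions's asymmetric projection lemma on $F=L^2(\R;V)$ with test space $\Phi=H^1(\R;V)$, obtain existence in $L^2(\R;V)$ first, then recover $u'\in L^2(\R;V')$ from the equation, prove uniqueness by an energy estimate, and only afterwards deduce $u\in H^{1/2}(\R;H)$ from the Fourier interpolation inequality. Your approach is more elementary in that it avoids the Hilbert transform altogether and uses only classical tools; the paper's approach, on the other hand, is more unified and---more importantly for the rest of the article---introduces the idea of testing against $(1-\delta\mathcal H)w$, which is precisely the device that drives the higher-regularity Theorem~\ref{thm:MRonR}.
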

\begin{proof}
	We define the Hilbert space $\V_0 :=H^{1/2}(\R;H)\cap L^2(\R;V)$ with norm $\norm{v}_{\V_0}^2 := \norm{\partial^{1/2}v}_{L^2_H}^2 + \norm{v}_{L^2_V}^2$.
	Furthermore, we define the bounded sesquilinear form $E \colon \V_0 \times \V_0 \to \K$ by
	\begin{equation*}
		E(v,w) := \int_\R (\partial^{1/2}v \, \vert \,  \partial^{(1/2)*} ( 1 - \delta \mathcal{H} ) w)_H \ \mathrm{d}{t}
			 + \int_\R \langle \A v, ( 1 - \delta \mathcal{H} ) w \rangle \ \mathrm{d}{t}
	\end{equation*}
	where $\delta:=\frac{\eta}{M+1}$ and $\mathcal H$ is the Hilbert transform, i.e. the operator with Fourier symbol $-i \operatorname{sign} \xi$.
	Note that $E$ is bounded.
	Moreover, $E$ is coercive, since for $v \in \V_0$ we obtain by the boundedness of $\A$, \eqref{eq:Acoercive} and Parseval's relation that
	\begin{align*}
		\operatorname{Re} E(v,v) &\ge \operatorname{Re} \int_\R \abs{\xi} (i\operatorname{sign}(\xi)+\delta) \norm{\hat v}_H^2 \ \mathrm{d}{\xi} +  (\eta- \delta M) \norm{v}_{L^2_V}^2 \\
			&= \delta \norm{\partial^{1/2}v}_{L^2_H}^2 + (\eta- \delta M) \norm{v}_{L^2_V}^2 = \delta \norm{v}_{\V_0}^2,
	\end{align*}
	where $\hat v(\xi) = \frac 1 {\sqrt{2\pi}} \int_\R e^{-i t \xi} v(t) \, \mathrm{d} t$ denotes the Fourier transform of $v$.
	
	Let $f \in L^2(\R;V')$ and define $F \in \V_0'$ by
	\[
		F(w) := \int_\R \langle f , ( 1 - \delta \mathcal{H} ) w \rangle \ \mathrm{d}{t}.
	\]
	Then by the Lax--Milgram Lemma, there exists a unique $u \in \V_0$ such that
	\begin{equation}\label{eq:LMidentity}
		E(u,w) = F(w) \quad (w \in \V_0).
	\end{equation}
	Since $0<\delta<1$ we obtain that $0<1-\delta \le \abs{\delta i \operatorname{sign} \xi +1}\le 1 +\delta$ for all $\xi \in \R$.
	Thus, by Plancherel's theorem $1- \delta \mathcal{H}$ defines an isomorphism on $H^1(\R;V)$.
	Now \eqref{eq:LMidentity} implies
	\[
		-\int_\R \langle u , v' \rangle \ \mathrm{d}{t} + \int_\R\langle \A u, v \rangle \ \mathrm{d}{t}= \int_\R \langle f, v \rangle \ \mathrm{d}{t} \quad (v \in H^1(\R;V)).
	\]
	Hence $u \in H^1(\R;V')$ and $u$ satisfies \eqref{eq:weaksolution} by density of $H^1(\R;V)$ in $L^2(\R;V)$.
	On the other hand, any solution of \eqref{eq:weaksolution} satisfies \eqref{eq:LMidentity}, since $\textit{MR}_0(\A) \hookrightarrow \V_0$.
	This embedding is a consequence of the estimate
	\[
		\int_\R \big\lVert \abs{\xi}^{1/2}\hat u(\xi) \big\rVert^2_H \ \mathrm{d}{\xi} = \int_\R \langle \abs{\xi}\hat u(\xi), \hat u(\xi) \rangle  \ \mathrm{d}{\xi}  \le \norm{\xi \hat u}_{L^2_{V'}} \norm{\hat u}_{L^2_V}
	\]
	and Plancherel's theorem. 
	Thus $u$ is unique.
\end{proof}
In order to model evolutionary problems we introduce the following `causality' condition.
\begin{proposition}\label{prop:evolution}
	Suppose the operator $\A$ is as above and commutes with the function $\1_{(-\infty,t)}$ for all $t \in \R$ in the sense that
	\[
		\1_{(-\infty,t)} \A v =  \A (\1_{(-\infty,t)} v) \quad (v \in L^2(\R;V),\ t \in \R).
	\]
	Then for any $t \in \R$ and $u \in \textit{MR}_0(\A)$ 
	we have $u(s) = 0$ for all $s \le t$ if and only if $f(s):=u'(s)+\A u(s) = 0$ for a.e. $s < t$.
	Here, we identify $u$ with its continuous version with values in $V'$.
\end{proposition}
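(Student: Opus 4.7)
The plan is to test the equation against $\1_{(-\infty,t)}u$ and exploit coercivity \eqref{eq:Acoercive} together with the causality hypothesis. A useful preliminary is the embedding $\textit{MR}_0(\A)\hookrightarrow C_0(\R;H)$: from the standard identity $\frac{\mathrm{d}}{\mathrm{d}s}\norm{u(s)}_H^2=2\operatorname{Re}\langle u'(s),u(s)\rangle$ on $L^2(\R;V)\cap H^1(\R;V')$, the real-valued function $\norm{u(\cdot)}_H^2$ is absolutely continuous with $L^1$ derivative (since $\norm{u'}_{V'}\norm{u}_V\in L^1$), hence of bounded variation on $\R$; combined with $\norm{u(\cdot)}_H\in L^2(\R)$ (via $V\hookrightarrow H$), the limits at $\pm\infty$ must vanish. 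This justifies integration by parts on half-lines and makes the boundary term at $-\infty$ disappear.

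For the ``only if'' direction, assume $u(s)=0$ for $s\le t$. Then $\1_{(-\infty,t)}u=0$ in $L^2(\R;V)$, so causality gives $\1_{(-\infty,t)}\A u=\A(\1_{(-\infty,t)}u)=0$, i.e.\ $\A u=0$ a.e.\ on $(-\infty,t)$. The vanishing of $u$ on $(-\infty,t]$ also forces $u'=0$ a.e.\ there directly from the definition of the weak $V'$-valued derivative. Adding the two gives $f=0$ a.e.\ on $(-\infty,t)$. For the converse, assume $f=0$ a.e.\ on $(-\infty,t)$. I pair $u'+\A u=f$ with $\1_{(-\infty,t)}u\in L^2(\R;V)$, integrate over $\R$ and take real parts. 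The right-hand side vanishes. The derivative term gives $\operatorname{Re}\int_{-\infty}^t\langle u',u\rangle\,\mathrm{d}s=\tfrac12\norm{u(t)}_H^2$, using the decay at $-\infty$. For the $\A$-term, the key chain of identities is
\[
\int_\R\langle \A u,\1_{(-\infty,t)}u\rangle\,\mathrm{d}s=\int_{-\infty}^t\langle \A u,u\rangle\,\mathrm{d}s=\int_{-\infty}^t\langle \A(\1_{(-\infty,t)}u),u\rangle\,\mathrm{d}s=\int_\R\langle \A(\1_{(-\infty,t)}u),\1_{(-\infty,t)}u\rangle\,\mathrm{d}s,
\]
where the second equality uses causality and the third uses that $\1_{(-\infty,t)}u$ vanishes on $[t,\infty)$. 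Coercivity \eqref{eq:Acoercive} then yields
\[
0\ge \tfrac12\norm{u(t)}_H^2+\eta\norm{\1_{(-\infty,t)}u}_{L^2_V}^2,
\]
and since both summands are non-negative they must both vanish. Thus $u(t)=0$ in $H$ and $u=0$ a.e.\ on $(-\infty,t)$; continuity of $u$ with values in $V'$ upgrades this to $u\equiv 0$ on $(-\infty,t]$.

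The main obstacle is the manipulation of the $\A$-term: causality only intertwines $\A$ with the one-sided projection $\1_{(-\infty,t)}$, so one must rewrite the left-half-line integral carefully in two steps to produce a quantity of the form $\int_\R\langle\A v,v\rangle$ with $v\in L^2(\R;V)$, to which the global coercivity hypothesis applies. Once this is in place, the remainder is routine Hilbert space bookkeeping, the only other subtle point being the $C_0(\R;H)$ embedding used to drop the boundary term at $-\infty$.
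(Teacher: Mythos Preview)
Your proof is correct and follows essentially the same route as the paper's: both directions hinge on the energy identity $(\norm{u}_H^2)'=2\operatorname{Re}\langle u',u\rangle$ and on using causality to convert global coercivity \eqref{eq:Acoercive} into the half-line estimate $\eta\int_{-\infty}^t\norm{u}_V^2\,\mathrm{d}s\le\operatorname{Re}\int_{-\infty}^t\langle\A u,u\rangle\,\mathrm{d}s$. The paper presents this last step in one line without spelling out the causality manipulation, whereas you make the chain $\int_\R\langle\A u,\1_{(-\infty,t)}u\rangle=\int_\R\langle\A(\1_{(-\infty,t)}u),\1_{(-\infty,t)}u\rangle$ fully explicit, which is arguably clearer.
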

\begin{proof}
	Let $t \in \R$ and $u \in \textit{MR}_0(\A)$.
	Note that we have $\norm{u(\cdot)}_H^2 \in W^{1,1}(\R)$ with $\left(\norm{u(\cdot)}_H^2\right)' = 2 \operatorname{Re} \langle u' , u \rangle$ (see \cite[Proposition~1.2]{Sho97}).
	
	First suppose that $u(s) = 0$ for all $s \le t$. Then we obtain that 
	\[
		f(s) = u'(s)+ \A u(s) = 0 + \1_{(-\infty,t)}(s) \A u(s) = 0 \quad (\text{a.e.\ } s < t).
	\]
	Now suppose that $f(s) = 0$  for a.e.\ $s < t$.  We have
	\begin{multline*}
		\eta  \int_{-\infty}^t \norm{u}_V^2 \ \mathrm{d}{s} \le \operatorname{Re} \int_{-\infty}^t \langle \A u, u \rangle \ \mathrm{d}{s} = \operatorname{Re} \int_{-\infty}^t \langle f- u', u \rangle \ \mathrm{d}{s}\\
			 = - \frac 1 2 \int_{-\infty}^t (\norm{u}_H^2)'  \ \mathrm{d}{s} = - \frac 1 2 \norm{u(t)}_H^2.
	\end{multline*}
	Thus $u(s)=0$ for all $s \le t$.
\end{proof}
   
Next we prove higher regularity under stronger conditions on the operator $\A$, where we write $\A$ as the sum of a regular part $\A_1$ and a perturbation $\A_2$. 
Let $\alpha \in (0,1/2]$ and let $\A_1, \A_2 \colon L^2(\R;V) \to L^2(\R;V')$ be linear operators.
Suppose there exist constants $\eta, \eta_1 >0$ and $M \ge 0$ such that
\begin{align}
	\norm{\A_1 v}_{L^2_{V'}} \le M \norm{v}_{L^2_V}& \quad &(v \in L^2(\R;V)), \label{eq:A1bounded}\\
	\A_1 v \in H^\alpha(\R;V') \text{ \& } \norm{\partial^\alpha \A_1 v}_{L^2_{V'}} \le M \norm{v}_{H^\alpha_V}& \quad &(v \in H^\alpha(\R;V)), \label{eq:A1bounded2}\\
	\operatorname{Re}  \int_\R \langle \A_1 v, v \rangle \ \mathrm{d}{t} \ge \eta \norm{v}_{L^2_V}^2& \quad &(v \in L^2(\R;V)), \label{eq:A1coercive} \\ 
	\operatorname{Re} \int_\R \langle \partial^\alpha\A_1 v, \partial^\alpha v \rangle \ \mathrm{d}{t} \ge \eta_1 \norm{\partial^\alpha v}_{L^2_V}^2 -M \norm{v}_{L^2_V}^2& \quad &(v \in H^\alpha(\R;V)).\label{eq:alphaA1coercive}
\end{align}
Moreover, suppose that there exists $M_2 \ge 0$ and $\eta_2 < \eta$ such that
\begin{align}
		\norm{\A_2 v}_{L^2_{V'}} \le M_2 \norm{v}_{L^2_V}& \quad &(v \in L^2(\R;V)), \label{eq:A2bounded}\\
	\operatorname{Re} \int_\R \langle \A_2 v, v \rangle \ \mathrm{d}{t} \ge - \eta_2 \norm{v}_{L^2_V}^2 & \quad &(v \in L^2(\R;V)), \label{eq:A_2}
\end{align}
and that for every $\varepsilon >0$ there exists a constant $c_\varepsilon$ with
\begin{multline}\label{eq:A_22}
	\left\lvert \int_\R \langle \A_2 v,  \partial^\alpha \partial^{\alpha*} w \rangle  \ \mathrm{d}{t} \right\rvert \le \left[\varepsilon \norm{\partial^\alpha v}_{L^2_V} + c_\varepsilon \norm{v}_{L^2_V}\right] \\
		\times \left[ \norm{w}_{H^\alpha_V} + \norm{\partial^{2\alpha} w}_{L^2_{H_{1-2\alpha}}} \right] \quad
			(v \in H^{\alpha}(\R;V),\, w \in H^{2\alpha}(\R;V) ).
\end{multline}

Note that the operator $\A:=\A_1+\A_2$ is in $\mathcal{L}(L^2(\R;V), L^2(\R;V'))$ by \eqref{eq:A1bounded} and \eqref{eq:A2bounded}  and satisfies \eqref{eq:Acoercive} by \eqref{eq:A1coercive} and \eqref{eq:A_2}.
Thus, we may apply Theorem~\ref{thm:weaksolutions} to the operator $\A$.

We define the Hilbert space $\V_\alpha := H^{1/2+\alpha}(\R;H) \cap H^{\alpha}(\R;V)$ with norm
\[
	\norm{v}_{\V_\alpha}^2 := \norm{\partial^{1/2+\alpha}v}_{L^2_H}^2 + \norm{v}_{H^\alpha_V}^2.
\]
Furthermore, we define the maximal regularity space 
\[
	\textit{MR}_{\alpha}(\A) := \{v \in H^1(\R;H_{2\alpha-1}) \cap L^2(\R;V) : \A v \in L^2(\R;H_{2\alpha-1})\}
\]
with norm
\[
	\norm{v}_{\textit{MR}_{\alpha}(\A)}^2 := \norm{v'}_{L^2_{H_{2\alpha-1}}}^2 + \norm{\A v}_{L^2_{H_{2\alpha-1}}}^2. 
\]
Note that $\textit{MR}_{\alpha}(\A)$ is a Hilbert space.

\begin{theorem}\label{thm:MRonR}
	Let $\alpha\in(0,\frac 1 2]$ and $\A:= \A_1+ \A_2$, where $\A_1, \A_2 \colon L^2(\R;V) \to L^2(\R;V')$ are linear operators satisfying \eqref{eq:A1bounded}--\eqref{eq:A_22}.
	Then, for every $f \in L^2(\R;H_{2\alpha-1})$, there exists a unique $u \in \textit{MR}_{\alpha}(\A)$ such that 
	\begin{equation}\label{eq:CPonR}
		u'+ \A u = f
	\end{equation}
	in $L^2(\R;H_{2\alpha-1})$. Moreover, $\textit{MR}_{\alpha}(\A) \hookrightarrow \V_\alpha$.
\end{theorem}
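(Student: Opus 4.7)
I would mimic the Lax--Milgram scheme of Theorem~\ref{thm:weaksolutions} on the larger Hilbert space $\V_\alpha$, which already encodes the extra regularity. Applying Theorem~\ref{thm:weaksolutions} to $\A=\A_1+\A_2$ already produces a unique weak solution $u\in\textit{MR}_0(\A)\subset H^{1/2}(\R;H)\cap L^2(\R;V)$ of \eqref{eq:CPonR} in $L^2(\R;V')$; since $H_{2\alpha-1}\hookrightarrow V'$ we have $\textit{MR}_\alpha(\A)\hookrightarrow\textit{MR}_0(\A)$, so uniqueness in $\textit{MR}_\alpha(\A)$ is automatic. The real content is therefore the existence of a solution in $\V_\alpha$ together with the strong identity $u'+\A u=f$ in $L^2(\R;H_{2\alpha-1})$.

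The estimate I would aim for arises from three testings, ultimately packaged into one coercive bounded sesquilinear form $E_\alpha$ on $\V_\alpha$. Testing with $u$ and $(1-\delta\mathcal H)u$ as in Theorem~\ref{thm:weaksolutions} controls $\norm{u}_{L^2_V}$ and $\norm{\partial^{1/2}u}_{L^2_H}$. Fractionally differentiating the equation by $\partial^\alpha$ and pairing with $\partial^\alpha u$ combines \eqref{eq:A1bounded2} with the enhanced coercivity \eqref{eq:alphaA1coercive} to yield a bound on $\norm{\partial^\alpha u}_{L^2_V}$; the accompanying $\A_2$-contribution has precisely the shape $\int_\R\langle\A_2 u,(\partial^\alpha)^*\partial^\alpha u\rangle\,dt$ of the left-hand side of \eqref{eq:A_22} and so is absorbable after fixing $\varepsilon$ small enough. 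To climb from $H^{1/2}(\R;H)$ to the full $H^{1/2+\alpha}(\R;H)$, I would finally pair $\partial^\alpha u'+\partial^\alpha\A u=\partial^\alpha f$ with $\mathcal H\partial^\alpha u$; a symbol computation on the Fourier side identifies the real part of $\int_\R(\partial^\alpha u'\mid\mathcal H\partial^\alpha u)_H\,dt$ with $\norm{\partial^{1/2+\alpha}u}_{L^2_H}^2$, providing the missing half-derivative.

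On the right-hand side, $f\in L^2(\R;H_{2\alpha-1})$ is to be paired with test functions in $L^2(\R;H_{1-2\alpha})$; Proposition~\ref{prop:complexinterpolation} ensures $(\partial^\alpha)^*\partial^\alpha(1-\delta\mathcal H)w\in L^2(\R;H_{1-2\alpha})$ for $w\in\V_\alpha$, so the functional built from $f$ is continuous on $\V_\alpha$ and Lax--Milgram produces a unique weak solution in $\V_\alpha$. A density argument, as at the end of the proof of Theorem~\ref{thm:weaksolutions}, then upgrades the weak identity to $u'+\A u=f$ in $L^2(\R;H_{2\alpha-1})$, placing $u$ in $\textit{MR}_\alpha(\A)$. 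The embedding $\textit{MR}_\alpha(\A)\hookrightarrow\V_\alpha$ follows from what precedes: given $v\in\textit{MR}_\alpha(\A)$, set $f:=v'+\A v\in L^2(\R;H_{2\alpha-1})$; the construction above yields a solution $\tilde v\in\V_\alpha$ with norm bounded by $\norm{f}$, and uniqueness inside $\textit{MR}_0(\A)$ forces $\tilde v=v$. The principal difficulty I anticipate is matching all the fractional orders—those of $\partial^{1/2+\alpha}$, $\partial^\alpha$, the Hilbert transform, and the interpolation powers of $\B$—so that the error terms produced by $\A_2$ land exactly in the form of \eqref{eq:A_22}; the peculiar shape of that hypothesis strongly suggests that $(\partial^\alpha)^*\partial^\alpha(1-\delta\mathcal H)w$ is the intended test function.
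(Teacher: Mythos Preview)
Your overall scheme---Lax--Milgram on $\V_\alpha$ with the test function $(\partial^{\alpha*}\partial^\alpha)(1-\delta\mathcal H)w$ (plus a large multiple $\rho w$ of the plain test, which you allude to but do not make explicit)---is exactly what the paper does, and your coercivity discussion, your identification of the $\A_2$-term with the left-hand side of \eqref{eq:A_22}, and your argument for the embedding $\textit{MR}_\alpha(\A)\hookrightarrow\V_\alpha$ are all correct.

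There is, however, a genuine gap in the step ``a density argument \dots\ upgrades the weak identity to $u'+\A u=f$ in $L^2(\R;H_{2\alpha-1})$, placing $u$ in $\textit{MR}_\alpha(\A)$''. The density argument from Theorem~\ref{thm:weaksolutions} only yields $u'+\A u=f$ in $L^2(\R;V')$, i.e.\ $u\in\textit{MR}_0(\A)\cap\V_\alpha$. Membership in $\textit{MR}_\alpha(\A)$ requires $u'\in L^2(\R;H_{2\alpha-1})$ \emph{individually}, and this does not follow from $u\in\V_\alpha=H^{1/2+\alpha}(\R;H)\cap H^\alpha(\R;V)$: that space gives at best $u'\in H^{\alpha-1/2}(\R;H)$, which for $\alpha<1/2$ is a negative-order space. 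Likewise $\A_1 u\in H^\alpha(\R;V')$ from \eqref{eq:A1bounded2} is not the same as $\A_1 u\in L^2(\R;H_{2\alpha-1})$.

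The paper closes this gap with a separate regularity lemma (Lemma~\ref{lem:MR}): starting from $u\in\textit{MR}_0(\A)\cap\V_\alpha$ one pairs the mollified equation with $\B^{-\delta}\abs\partial^{\delta+2\alpha}u_n$, uses the interpolation identity of Lemma~\ref{lem:interpolation}, and obtains a uniform bound for $u$ in $H^{\frac{\delta+1}{2}+\alpha}(\R;H_{-\delta})$ whenever $u\in H^{\delta+\alpha}(\R;H_{1-2\delta})$. Iterating along $\delta_n=1-2^{-n}$ (finitely many steps, fewer when $\alpha\ge 1/4$) eventually reaches $\delta=1-2\alpha$, which is precisely $u\in H^1(\R;H_{2\alpha-1})$. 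This bootstrap is the missing ingredient in your plan; without it the passage from $\V_\alpha$ to $\textit{MR}_\alpha(\A)$ is unjustified.
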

For the proof of the theorem we begin with two lemmas.
\begin{lemma}\label{lem:interpolation}
	Let $\alpha \in (0, \frac 1 2]$ and $\delta\in [0,1-2\alpha]$.
	Suppose $u \in H^{\frac{\delta+1}2 +\alpha}(\R; H_{-\delta}) \cap H^{\delta +\alpha}(\R; H_{1-2\delta})$.
	Then $u \in H^{\delta + 2\alpha}(\R;H_{1-2\delta-2\alpha})$ and
	\begin{equation}\label{eq:interpolation}
		\norm{\partial^{\delta +2\alpha} u}_{L^2(\R;H_{1-2\delta-2\alpha})} \le  \norm{u}_{H^{\frac{\delta+1}2 +\alpha}(\R; H_{-\delta})}^{\frac{2\alpha}{1-\delta}}
				\norm{u}_{H^{\delta +\alpha}(\R; H_{1-2\delta})}^{1-\frac{2\alpha}{1-\delta}}.
	\end{equation}	
\end{lemma}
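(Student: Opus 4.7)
The plan is to convert everything to the Fourier side via Plancherel's theorem, then interpolate pointwise in the $H_\gamma$-scale at each frequency. Set $\theta:=\tfrac{2\alpha}{1-\delta}$, which lies in $(0,1]$ by the hypotheses on $\delta$ and $\alpha$. A quick check verifies the two affine identities
\begin{equation*}
	(1-\theta)(\delta+\alpha) + \theta\bigl(\tfrac{\delta+1}{2}+\alpha\bigr) = \delta + 2\alpha,\qquad
	(1-\theta)(1-2\delta) + \theta(-\delta) = 1 - 2\delta - 2\alpha,
\end{equation*}
so that both the target differentiation order and the target Gelfand index sit on the line joining those of the two hypothesized spaces, with the common interpolation parameter $\theta$.

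The core estimate is the pointwise Hölder-type inequality on the $H_\gamma$-scale,
\begin{equation*}
	\norm{w}_{H_{1-2\delta-2\alpha}} \le \norm{w}_{H_{1-2\delta}}^{1-\theta}\,\norm{w}_{H_{-\delta}}^{\theta},
\end{equation*}
valid for all $w\in H_{1-2\delta}$. I would derive this from the spectral theorem for the self-adjoint positive operator $B$ (Proposition~\ref{prop:complexinterpolation}): setting $x:=\B^{-1/2}w \in H$, each norm $\norm{\B^{\gamma/2}w}_H$ reduces to $\norm{B^{(\gamma+1)/2}x}_H$, and convexity of $s\mapsto\log\int\lambda^{2s}\,\mathrm{d}\mu_x(\lambda)$ -- with $\mu_x$ the spectral measure of $B$ at $x$ -- gives the inequality with constant one.

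With this in hand, Plancherel rewrites the target as $\int_\R\abs{\xi}^{2(\delta+2\alpha)}\norm{\hat u(\xi)}_{H_{1-2\delta-2\alpha}}^2\,\mathrm{d}\xi$. Factoring the weight as $\abs{\xi}^{2(\delta+2\alpha)}=\abs{\xi}^{2(1-\theta)(\delta+\alpha)}\,\abs{\xi}^{2\theta((\delta+1)/2+\alpha)}$ and inserting the pointwise bound, the integrand becomes $\varphi(\xi)^{1-\theta}\psi(\xi)^{\theta}$ with $\varphi(\xi):=\abs{\xi}^{2(\delta+\alpha)}\norm{\hat u(\xi)}_{H_{1-2\delta}}^2$ and $\psi(\xi):=\abs{\xi}^{2((\delta+1)/2+\alpha)}\norm{\hat u(\xi)}_{H_{-\delta}}^2$. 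Hölder's inequality in $\xi$ with exponents $1/(1-\theta)$ and $1/\theta$, followed by Plancherel in reverse, yields the product of the two differential seminorms; since these are dominated by the corresponding full $H^s$-norms, \eqref{eq:interpolation} follows after taking square roots. The boundary case $\theta=1$ (i.e., $\delta=1-2\alpha$) is trivially true since the two differentiation indices coincide there.

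The main obstacle is the pointwise interpolation \emph{with constant one}: a direct appeal to complex interpolation of the $H_\gamma$-scale would only deliver this up to a norm-equivalence constant, which the statement of \eqref{eq:interpolation} does not allow. The spectral-theoretic route via the part operator $B$ keeps the constant sharp, but it rests on the identification of fractional powers of $\B$ on $V'$ with those of $B$ on $H$ along the relevant domains -- exactly the kind of computation at the heart of Proposition~\ref{prop:complexinterpolation}.
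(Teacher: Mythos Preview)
Your argument is correct. The paper's own proof is a two-line appeal to abstract complex interpolation: it identifies
\[
H^{\delta+2\alpha}(\R;H_{1-2\delta-2\alpha}) = \bigl[H^{\frac{\delta+1}{2}+\alpha}(\R;H_{-\delta}),\, H^{\delta+\alpha}(\R;H_{1-2\delta})\bigr]_{\lambda},\qquad \lambda=\tfrac{2\alpha}{1-\delta},
\]
via Proposition~\ref{prop:complexinterpolation} and then invokes the interpolation inequality from \cite[p.~53]{Lun09}. Your route is genuinely different: you work directly on the Fourier side, use the spectral representation of $B$ to obtain the pointwise three-line bound $\norm{w}_{H_{1-2\delta-2\alpha}}\le\norm{w}_{H_{1-2\delta}}^{1-\theta}\norm{w}_{H_{-\delta}}^{\theta}$ with constant one, and finish with H\"older in the frequency variable. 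The paper's approach is much shorter and reuses standard machinery; yours is more self-contained and, as you note, keeps the constant in \eqref{eq:interpolation} equal to one without having to worry about whether the complex-interpolation identification of the mixed $H^s(\R;H_\gamma)$ spaces is isometric or merely topological. In the paper this point is moot because the lemma is only ever used qualitatively (in Lemma~\ref{lem:MR} and in the coercivity estimate for $E$), so a harmless multiplicative constant would not affect anything downstream.
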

\begin{proof}
		Note that 
		\[
			H^{\delta + 2\alpha}(\R;H_{1-2\delta-2\alpha}) = [H^{\frac{\delta+1}2 +\alpha}(\R; H_{-\delta}), H^{\delta +\alpha}(\R; H_{1-2\delta})]_\lambda,
		\]
		with $\lambda = \frac{2\alpha}{1-\delta} \in [0,1]$. Now the claim follows by Proposition~\ref{prop:complexinterpolation} and \cite[p.~53]{Lun09}.
\end{proof}
\begin{lemma}\label{lem:MR}
	Let $\alpha\in(0,\frac 1 2]$ and $\A:= \A_1+ \A_2$, where $\A_1, \A_2 \colon L^2(\R;V) \to L^2(\R;V')$ are linear operators satisfying \eqref{eq:A1bounded}-\eqref{eq:A_22}.
	If $f \in L^2(\R;H_{2\alpha-1})$ and $u \in \textit{MR}_0(\A) \cap \V_\alpha$ such that $u'+\A u =f$,
			then $u \in \textit{MR}_\alpha(\A)$. 
\end{lemma}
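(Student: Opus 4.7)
Since $u'+\A u=f$ holds in $L^2(\R;V')$ and $f\in L^2(\R;H_{2\alpha-1})$, the condition $u\in \textit{MR}_\alpha(\A)$ is equivalent to the single statement $\A u\in L^2(\R;H_{2\alpha-1})$: the relation $u'=f-\A u$ then forces $u'\in L^2(\R;H_{2\alpha-1})$, while $u\in L^2(\R;V)\subset L^2(\R;H_{2\alpha-1})$ is already part of the hypothesis $u\in\textit{MR}_0(\A)$. The whole proof therefore focuses on bounding $\|\A u\|_{L^2(\R;H_{2\alpha-1})}$.

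From the hypothesis $u\in\V_\alpha$, Lemma~\ref{lem:interpolation} applied with $\delta=0$ gives $u\in H^{2\alpha}(\R;H_{1-2\alpha})$, so that $\partial^\alpha\partial^{\alpha*}u\in L^2(\R;H_{1-2\alpha})$, the antidual of $L^2(\R;H_{2\alpha-1})$. My plan is to test the equation against $\partial^\alpha\partial^{\alpha*}u$. By a Parseval-type identity,
\[
\int_\R\langle g,\partial^\alpha\partial^{\alpha*}u\rangle\,\mathrm{d}t=\int_\R\langle\partial^\alpha g,\partial^\alpha u\rangle\,\mathrm{d}t,
\]
which makes $\operatorname{Re}\int\langle u',\partial^\alpha\partial^{\alpha*}u\rangle\,\mathrm{d}t$ vanish; yields, via \eqref{eq:A1bounded2} and \eqref{eq:alphaA1coercive}, the lower bound $\eta_1\|\partial^\alpha u\|_{L^2_V}^2-M\|u\|_{L^2_V}^2$ for the $\A_1$-contribution; allows \eqref{eq:A_22} with $v=w=u$ to control the $\A_2$-contribution; and, together with the $H_{2\alpha-1}$--$H_{1-2\alpha}$ duality and Lemma~\ref{lem:interpolation}, bounds the $f$-contribution by $\|f\|_{L^2(H_{2\alpha-1})}\|u\|_{H^{2\alpha}(H_{1-2\alpha})}$. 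Absorbing the $\varepsilon\|\partial^\alpha u\|_{L^2_V}^2$-term from \eqref{eq:A_22} into the coercivity term provides a quantitative a priori estimate.

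Finally, to extract $\A u\in L^2(\R;H_{2\alpha-1})$ from these estimates I would argue by duality: for $\phi\in L^2(\R;H_{1-2\alpha})$, represent $\phi$ (on a dense subspace) as $\partial^\alpha\partial^{\alpha*}w$ for a suitable $w\in H^{2\alpha}(\R;V)$ by inverting the Fourier multiplier $|\partial|^{2\alpha}$ (with a low-frequency truncation handled separately), and then apply the Parseval identity together with \eqref{eq:A1bounded2} and \eqref{eq:A_22} to bound $|\int\langle\A u,\phi\rangle\,\mathrm{d}t|$ by $C\|\phi\|_{L^2(H_{1-2\alpha})}$. I expect the hard part to be precisely these two technical points: (i) justifying the use of $w=u$ in \eqref{eq:A_22}, which demands time-mollification of $u$ into $H^{2\alpha}(\R;V)$ and a limit argument using the uniform estimates, and (ii) the density/decomposition step needed to pass from the restricted class of test functions $\{\partial^\alpha\partial^{\alpha*}w\}$ to arbitrary $\phi\in L^2(\R;H_{1-2\alpha})$.
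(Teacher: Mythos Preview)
Your reduction in the first paragraph is correct, and Step~1 (testing against $\partial^\alpha\partial^{\alpha*}u$) is fine as written, but it produces no new information: the hypothesis $u\in\V_\alpha$ already gives $\|\partial^\alpha u\|_{L^2_V}<\infty$, so all the work has to be done by Step~2.

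The duality argument in Step~2 has a genuine gap. Take just the $\A_1$ contribution. Writing $\phi=\lvert\partial\rvert^{2\alpha}w$ and using \eqref{eq:A1bounded2} you obtain
\[
\Bigl|\int_\R\langle \A_1 u,\phi\rangle\,\mathrm{d}t\Bigr|
=\Bigl|\int_\R\langle \partial^\alpha\A_1 u,\partial^\alpha w\rangle\,\mathrm{d}t\Bigr|
\le M\|u\|_{H^\alpha_V}\,\|\partial^\alpha w\|_{L^2_V}
= M\|u\|_{H^\alpha_V}\,\bigl\|\lvert\partial\rvert^{-\alpha}\phi\bigr\|_{L^2_V}.
\]
For your plan to succeed you would need $\bigl\|\lvert\partial\rvert^{-\alpha}\phi\bigr\|_{L^2_V}\le C\|\phi\|_{L^2_{H_{1-2\alpha}}}$, but $\lvert\partial\rvert^{-\alpha}$ is a purely temporal Fourier multiplier and cannot upgrade spatial regularity from $H_{1-2\alpha}$ to $V=H_1$. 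The same obstruction appears in \eqref{eq:A_22}, whose right-hand side contains $\|w\|_{H^\alpha_V}$. In short, the set $\{\lvert\partial\rvert^{2\alpha}w:w\in H^{2\alpha}(\R;V)\}$ is dense in $L^2(\R;H_{1-2\alpha})$, but not with a norm estimate of the form $\|w\|_{H^\alpha_V}\lesssim\|\phi\|_{L^2_{H_{1-2\alpha}}}$, so the functional does not extend boundedly. The assumptions \eqref{eq:A1bounded}--\eqref{eq:A_22} give no direct mapping property of $\A$ into $H_{2\alpha-1}$; that space can only be reached through the equation.

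The paper's proof proceeds quite differently: it targets $u\in H^1(\R;H_{2\alpha-1})$ rather than $\A u\in L^2(\R;H_{2\alpha-1})$, and does so by a bootstrap in a parameter $\delta\in[0,1-2\alpha]$. One tests the mollified equation against $\mathcal H\,\B^{-\delta}\lvert\partial\rvert^{\delta+2\alpha}u_n$; the Hilbert transform turns the $u'$-term into the positive quantity $\|\partial^{(\delta+1)/2+\alpha}u_n\|_{L^2_{H_{-\delta}}}^2$, while the factor $\B^{-\delta}$ is exactly what lets you trade spatial for temporal regularity. Starting from $u\in H^{1/2+\alpha}(\R;H)$ (i.e.\ $\delta=\tfrac12$) one iterates $\delta\mapsto\tfrac{\delta+1}{2}$ until $\delta$ can be taken equal to $1-2\alpha$, which is $u\in H^1(\R;H_{2\alpha-1})$. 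For $\alpha\ge\tfrac14$ a single step suffices; for small $\alpha$ several are needed. Your proposal is missing both ingredients: the Hilbert transform (so that the time-derivative term becomes coercive rather than merely vanishing) and the spatial weight $\B^{-\delta}$ that makes the passage from $V'$ to $H_{2\alpha-1}$ possible.
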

\begin{proof}
	Let $\delta \in [0,1-2\alpha]$ and suppose that $u \in H^{\delta+\alpha}(\R; H_{1-2\delta})$.
	We show that $u$ is in $H^{\frac{\delta+1}2 +\alpha}(\R; H_{-\delta})$.
	Let $\rho \colon \R \to [0,\infty)$ be a mollifier and define the function $\rho_n \colon \R \to [0,\infty)$ by $\rho_n(t):= n \rho(n t)$ for $n \in \N$.
	We set $g_n := g*\rho_n$ for any $n \in \N$ and $g \in L^2(\R;V')$. Moreover, we denote by $\abs\partial$ the operator with Fourier symbol $\abs\xi$.
	Since $u_n \in H^1(\R;V)$, we obtain
	\begin{align*}
		&\norm{\partial^{\frac{\delta+1}2 +\alpha} u_n}_{L^2_{H_{-\delta}}}^2 = \norm{\B^{-\delta/2} \abs\partial^{\frac{\delta+1}2 +\alpha} u_n}_{L^2_H}^2\\
			&= \int_\R(\mathcal{H} u_n' \, \vert \, \B^{-\delta} \abs\partial^{\delta +2\alpha} u_n)_{H} \ \mathrm{d}{t} 
			= \int_\R \langle \mathcal{H} (f_n-(\A u)_n) , \B^{-\delta} \abs{\partial}^{\delta +2\alpha} u_n \rangle  \ \mathrm{d}{t}\\
			&= \int_\R ( \B^{\alpha-1/2}\mathcal{H} f_n \, \vert \, \B^{1/2-\delta-\alpha} \abs\partial^{\delta +2\alpha} u_n)_{H}  \ \mathrm{d}{t}\\
				&\quad- \int_\R \langle \mathcal{H} \abs\partial^\alpha (\A_1 u)_n, \B^{-\delta} \abs\partial^{\delta +\alpha} u_n \rangle\ \mathrm{d}{t}
				- \int_\R \langle (\A_2 u)_n ,  \abs\partial^{2\alpha + \delta}  \mathcal{H} \B^{-\delta} u_n \rangle \ \mathrm{d}{t}\\
			&= : R_1 + R_2+ R_3.
	\end{align*}
	We have
	\begin{equation*}
		\abs{R_1} \le \norm{f}_{L^2(\R;H_{2\alpha-1})} \norm{\partial^{\delta +2\alpha} u_n}_{L^2(\R;H_{1-2\delta-2\alpha})}.
	\end{equation*}
	By \eqref{eq:A1bounded2} we have
	\begin{equation*}
		\abs{R_2} \le M \norm{u}_{H^\alpha(\R;V)} \norm{\partial^{\delta +\alpha} u}_{L^2(\R;H_{1-2\delta})}.
	\end{equation*}
	Moreover, by \eqref{eq:A_22} for every $\varepsilon >0$ there exists some constant $c_\varepsilon$ such that
	\begin{equation*}
		\abs{R_3} \le \big[\varepsilon \norm{\partial^\alpha u}_{L^2_V} + c_\varepsilon \norm{u}_{L^2_V}\big] \big[ \norm{u}_{H^{\delta +\alpha}(\R; H_{1-2\delta})} + \norm{u_n}_{H^{\delta + 2\alpha}(\R;H_{1-2\delta-2\alpha})} \big].
	\end{equation*}
	We apply Lemma~\ref{lem:interpolation} and Young's inequality for products
	and obtain that there exists some constant $c>0$ such that
	\[
		\norm{\partial^{\frac{\delta+1}2 +\alpha} u_n}_{L^2_{H_{-\delta}}} \le c \left[\norm{u}_{H^{\delta +\alpha}(\R; H_{1-2\delta})}  + \norm{u}_{\V_\alpha}+ \norm{f}_{L^2(\R;H_{2\alpha-1})} \right].
	\]
	By this inequality and since $u_n \to u$ in $L^2(\R;V)$, we obtain that every subsequence of $(u_n)$ converges weakly to $u$ in $H^{\frac{\delta+1}2 +\alpha}(\R; H_{-\delta})$.
	Hence $u$ belongs to $H^{\frac{\delta+1}2 +\alpha}(\R; H_{-\delta})$.
	
	If $\alpha \ge \frac 1 4$ we choose $\delta = 1- 2\alpha$ and obtain that $u \in H^1(\R; H_{2\alpha -1})$
	and consequently $u \in \textit{MR}_\alpha(\A)$.
	In the case $\alpha < \frac 1 4$ we have to iterate. 
	We consider the sequence $\delta_n = 1-2^{-n}$.
	If $u \in H^{1-2^{-n}+ \alpha}(\R;H_{2^{1-n}-1})$, which is the case for $n=1$, then we obtain by the consideration above that $u \in H^{1-2^{-(n+1)}+ \alpha}(\R;H_{2^{-n}-1})$, provided that $1-2^{-n}+\alpha \le 1$.
	Now if $n$ is the maximal integer satisfying this inequality we choose $\delta = 1- 2\alpha$ 
	and obtain that $u \in H^1(\R; H_{2\alpha -1})$.
\end{proof}

\begin{proof}[Proof of Theorem~\ref{thm:MRonR}]
	Let  $\alpha \in (0,1/2]$ and $f \in L^2(\R;H_{2\alpha-1})$.
	Note that $\V_\alpha \hookrightarrow H^{2\alpha}(\R;H_{1-2\alpha}) \cap H^{\alpha}(\R;V)$ by Lemma~\ref{lem:interpolation} with $\delta =0$.
	Thus by \eqref{eq:A_22} the sesquilinear form $(v,w) \mapsto \int_\R \langle \A_2 v,  \partial^\alpha \partial^{\alpha*} w \rangle  \, \mathrm{d}{t} $ extends continuously to
	a sesquilinear form from $\V_\alpha \times \V_\alpha$ to $\K$. 
	
	We define the bounded sesquilinear form $E \colon \V_\alpha \times \V_\alpha \to \K$ by
	\begin{multline*}
		E(v,w) := \int_\R (\partial^{1/2+\alpha}v \, \vert \,  \partial^{(1/2-\alpha)*} [\partial^{\alpha*} \partial^\alpha( 1 - \delta \mathcal{H} ) +\rho ] w)_H \ \mathrm{d}{t}\\
			 +  \int_\R \langle \partial^\alpha \A_1 v, \partial^\alpha( 1 - \delta \mathcal{H} ) w \rangle \ \mathrm{d}{t}
			+  \int_\R \langle \A_2 v, \partial^{\alpha*}\partial^{\alpha} ( 1 - \delta \mathcal{H} ) w \rangle \ \mathrm{d}{t}\\
			+ \rho  \int_\R \langle (\A_1+\A_2) v,  w \rangle \ \mathrm{d}{t},
	\end{multline*}
	where we choose $\delta, \rho >0$ appropriately.
	Furthermore, we define $F \in \V_\alpha'$ by
	\[
		F(w) := \int_\R (\B^{\alpha- 1/2} f \, \vert \, \B^{-\alpha+ 1/2}[\partial^{\alpha*} \partial^\alpha( 1 - \delta \mathcal{H} )+\rho] w )_H \ \mathrm{d}{t}.
	\]
	
	We show later that $E$ is coercive. If this is the case, then by the Lax--Milgram Lemma, there exists a unique $u \in \V_\alpha$ such that
	\[
		E(u,w) = F(w) \quad (w \in \V_\alpha).
	\]
	The operator $D\colon H^{1/2+\alpha}(\R;V) \to H^{1/2-\alpha}(\R;V)$, $v \mapsto [\partial^{\alpha*} \partial^\alpha( 1 - \delta \mathcal{H} )+\rho] v$ is invertible, 
	since it has the symbol $(1+\delta i\operatorname{sign}(\xi))\abs{\xi}^{2\alpha} + \rho$.
	Let $v \in H^{1/2-\alpha}(\R;V)$ and set $w= D^{-1}v \in \V_\alpha$.
	Now the identity $E(u,w)=F(w)$ implies, as in the proof of Theorem~\ref{thm:weaksolutions}, that $u \in \textit{MR}_0(\A)$ with $u'+\A u = f$ in $L^2(\R;V')$.
	We conclude by Lemma~\ref{lem:MR} that $u\in\textit{MR}_\alpha(\A)$. Moreover, $u$ is unique by Theorem~\ref{thm:weaksolutions}.
	
	We finish the proof of the theorem by establishing coercivity of $E$.
	Let $v \in \V_\alpha$, then 
	\begin{align*}
		&\operatorname{Re} E(v,v) \ge \operatorname{Re} \int_\R [\abs{\xi}^{1+2\alpha} (i\operatorname{sign}(\xi)+\delta) +\rho i \xi]\norm{\hat v}_H^2 \ \mathrm{d}{\xi} \\ 
				&\quad+  (\eta_1 - \delta M) \norm{\partial^\alpha v}_{L^2_V}^2 + (\rho (\eta- \eta_2) -\delta M- M) \norm{v}_{L^2_V}^2\\
			&\quad - \left[\varepsilon \norm{\partial^\alpha v}_{L^2_V} + c_\varepsilon \norm{v}_{L^2_V}\right] 
					\left[ \norm{( 1 - \delta \mathcal{H} )v}_{H^\alpha(\R;V)} + \norm{\partial^{2\alpha} ( 1 - \delta \mathcal{H} )v}_{L^2_{H_{1-2\alpha}}} \right]\\
			&\ge \delta \norm{\partial^{1/2+\alpha}v}_{L^2_H}^2 +  (\eta_1- \delta M) \norm{\partial^\alpha v}_{L^2_V}^2
				 + (\rho (\eta-\eta_2) -\delta M- M) \norm{v}_{L^2_V}^2\\
			&\quad- \left[\varepsilon \norm{\partial^\alpha v}_{L^2_V} + c_\varepsilon \norm{v}_{L^2_V}\right] 2\sqrt{\delta^2 + 1}\norm{v}_{\V_\alpha}
	\end{align*}		
	by \eqref{eq:A1coercive}, \eqref{eq:alphaA1coercive}, \eqref{eq:A_2}, \eqref{eq:A_22} and Lemma~\ref{lem:interpolation}.
	Hence, $E$ is coercive for sufficiently small $\delta$, $\varepsilon$ and sufficiently large $\rho$ by Young's inequality for products.
\end{proof}

\section{Non-autonomous forms}\label{sec:forms}
Let $V, W, H$ be Hilbert spaces over the field $\K$ with $V, W \overset d \hookrightarrow H$.
Let $I \subset \R$ be a closed interval.
The mapping
\[
	\fra \colon I \times V \times W \to \K
\]
is called a \emph{non-autonomous form} if $\fra(t,\cdot,\cdot)\colon V \times W \to \K$ is sesquilinear for all $t \in I$ and
$\fra(\cdot,v,w) \colon I \to \K$ is measurable for all $v \in V$ and $w \in W$.

We say the non-autonomous form $\fra$ is \emph{bounded} if there exists a constant $M$ such that
\begin{equation}\label{eq:Vbounded}
	\abs{\fra(t,v,w)} \le M \norm v_V \norm w_W \quad (t \in I,\ v \in V,\ w\in W).
\end{equation}

\begin{proposition}\label{prop:assocop}
	There exists a unique operator $\A \in \mathcal{L}(L^2(I;V), L^2(I;W'))$ such that $\fra(t,v(t),w(t)) = \langle (\A v)(t), w(t) \rangle$ for a.e.\ $t \in I$, for all $v\in L^2(I;V)$ and $w \in L^2(I;W)$.
\end{proposition}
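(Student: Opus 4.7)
The plan is to construct $\A$ pointwise in $t$ and then extend by density. For each fixed $t \in I$, the sesquilinearity of $\fra(t,\cdot,\cdot)$ together with the bound \eqref{eq:Vbounded} identifies $v \mapsto \fra(t,v,\cdot)$ with a bounded linear map $\A(t) \colon V \to W'$ of operator norm at most $M$, characterized by $\langle \A(t) v, w \rangle = \fra(t,v,w)$ for $v \in V$, $w \in W$. The task is to assemble the family $\{\A(t)\}_{t \in I}$ into a single bounded operator between the Bochner spaces.

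The key technical step is measurability. For fixed $v \in V$, the scalar function $t \mapsto \langle \A(t) v, w \rangle = \fra(t,v,w)$ is measurable for every $w \in W$ by the definition of a non-autonomous form, so $t \mapsto \A(t) v$ is weakly measurable into $W'$. Since $W$ is a separable Hilbert space (as is implicit throughout the paper), $W'$ is separable as well, and Pettis' theorem upgrades weak measurability to strong measurability. For a simple function $v = \sum_i \1_{E_i} v_i \in L^2(I;V)$ I set $(\A v)(t) := \sum_i \1_{E_i}(t) \A(t) v_i$; this is strongly measurable into $W'$, satisfies the defining identity pointwise by construction, and the uniform bound $\norm{\A(t) x}_{W'} \le M \norm{x}_V$ yields $\norm{\A v}_{L^2_{W'}} \le M \norm{v}_{L^2_V}$.

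Since simple $V$-valued functions are dense in $L^2(I;V)$, this uniformly continuous linear assignment extends uniquely to a bounded operator $\A \colon L^2(I;V) \to L^2(I;W')$ of norm at most $M$. To transfer the pointwise identity to a general $v \in L^2(I;V)$, I approximate $v$ by simple $v_n \to v$ in $L^2(I;V)$ and may assume, after passing to a subsequence, that $v_n(t) \to v(t)$ in $V$ for a.e.\ $t$. Since $\A v_n \to \A v$ in $L^2(I;W')$, a further subsequence converges in $W'$ for a.e.\ $t$, and passing to the limit on both sides of $\fra(t, v_n(t), w(t)) = \langle (\A v_n)(t), w(t) \rangle$ (using continuity of $\fra(t, \cdot, w(t))$ on the left and the norm bound on the right) yields the identity for $v$.

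For uniqueness, if $\A_1$ and $\A_2$ both satisfy the stated identity then, fixing $v \in L^2(I;V)$ and taking $w(t) = \1_E(t) w_0$ for each measurable $E \subset I$ and each $w_0$ in a countable dense subset of $W$, one obtains $\langle ((\A_1 - \A_2) v)(t), w_0 \rangle = 0$ for a.e.\ $t$; intersecting the null sets forces $(\A_1 - \A_2) v = 0$ in $L^2(I;W')$. I expect the only real subtlety to be the measurability step, which rests on Pettis' theorem together with the (implicit) separability of $W$; the remainder is routine density bookkeeping.
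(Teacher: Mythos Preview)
Your argument is correct but follows a genuinely different route from the paper. The paper works globally rather than pointwise: it forms the integrated sesquilinear form $\tilde\fra(v,w)=\int_I \fra(t,v(t),w(t))\,\mathrm{d}t$ on $L^2(I;V)\times L^2(I;W)$, which yields an operator $\tilde\A\colon L^2(I;V)\to (L^2(I;W))'$, and then invokes the isometric isomorphism $\Phi\colon L^2(I;W')\to (L^2(I;W))'$ (a consequence of the Radon--Nikod\'ym property of the Hilbert space $W'$, cited from Diestel--Uhl) to set $\A:=\Phi^{-1}\circ\tilde\A$. The a.e.\ pointwise identity then follows from the integral identity $\int_I\langle(\A v)(t),w(t)\rangle\,\mathrm{d}t=\int_I\fra(t,v(t),w(t))\,\mathrm{d}t$ by testing with $w=\1_E w_0$. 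Your pointwise-plus-density construction is more hands-on and makes the relation $(\A v)(t)=\A(t)v(t)$ transparent, but it leans on Pettis' theorem and hence on separability of $W$. That assumption is \emph{not} actually imposed in the paper; indeed, the remark preceding the proof of Theorem~\ref{thm:weakMRinRforformsonI} indicates that the authors deliberately avoid it. So while your proof is sound under separability, the paper's duality argument is a bit slicker and dispenses with that hypothesis.
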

\begin{lemma}\label{lem:Phi}
	The mapping $\Phi \colon L^2(I;W') \to (L^2(I;W))'$, $v \mapsto \int_I \langle v(t), . \rangle \ \mathrm{d} t$ is an isometric isomorphism.
\end{lemma}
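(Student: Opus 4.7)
The plan is to identify $\Phi$ as a composition of two canonical isometric isomorphisms, both supplied by the Riesz representation theorem, one applied pointwise in $t$ and one applied to the Hilbert space $L^2(I;W)$ itself. Concretely, let $R \colon W \to W'$ denote the Riesz isomorphism, i.e.\ $R(u) := (u \, \vert \, \cdot)_W$, which is a linear (in the complex case, with the antidual convention of the paper) isometric bijection between $W$ and $W'$. Since $R$ is continuous, the pointwise composition $u \mapsto R \circ u$ maps strongly measurable functions to strongly measurable functions and preserves pointwise norms; hence it induces an isometric isomorphism $\tilde R \colon L^2(I;W) \to L^2(I;W')$.

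First, I would check that $\Phi$ is well-defined and a contraction: for $v \in L^2(I;W')$ and $w \in L^2(I;W)$, the function $t \mapsto \langle v(t), w(t)\rangle$ is measurable and bounded by $\|v(t)\|_{W'}\|w(t)\|_W$ in absolute value, so by Cauchy--Schwarz and Hölder,
\[
	\abs{\Phi(v)(w)} \le \norm{v}_{L^2_{W'}} \norm{w}_{L^2_W}.
\]
Thus $\Phi \in \mathcal L(L^2(I;W'), (L^2(I;W))')$ with $\norm{\Phi(v)} \le \norm v_{L^2_{W'}}$.

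The key step is to show that $\Phi$ factors as $\Phi = J \circ \tilde R^{-1}$, where $J \colon L^2(I;W) \to (L^2(I;W))'$ is the Riesz isomorphism for the Hilbert space $L^2(I;W)$. Indeed, given $v \in L^2(I;W')$, set $u := \tilde R^{-1} v \in L^2(I;W)$, so that $R(u(t)) = v(t)$ for a.e.\ $t$; then for any $w \in L^2(I;W)$,
\[
	J(u)(w) = (u \, \vert \, w)_{L^2_W} = \int_I (u(t) \, \vert \, w(t))_W \ \mathrm{d}{t} = \int_I \langle R u(t), w(t)\rangle \ \mathrm{d}{t} = \Phi(v)(w).
\]
Hence $\Phi = J \circ \tilde R^{-1}$. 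Since both $J$ and $\tilde R$ are isometric isomorphisms, so is $\Phi$.

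The only places one might pause are administrative rather than substantial: one must verify the measurability of $R \circ u$ (immediate from continuity of $R$) and keep the (anti)linearity conventions straight so that $R$ indeed maps into the antidual $W'$ used in the paper. Apart from these bookkeeping points, the argument is essentially a packaging of two applications of Riesz representation.
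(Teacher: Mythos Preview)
Your argument is correct. Factoring $\Phi$ as $J \circ \tilde R^{-1}$, with $\tilde R$ the pointwise Riesz map and $J$ the Riesz isomorphism of $L^2(I;W)$, is a clean way to see that $\Phi$ is an isometric isomorphism; the measurability and antilinearity caveats you mention are indeed only bookkeeping.

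The paper itself does not give a proof but simply refers to \cite[p.~98]{DU77}, where the statement appears as a special case of the general duality $(L^p(I;X))' \cong L^{p'}(I;X')$ for Banach spaces $X$ whose dual has the Radon--Nikodym property. That route is considerably heavier, relying on vector-measure machinery to represent bounded functionals on $L^p(I;X)$ by $X'$-valued densities. Your argument is more elementary and tailored to the Hilbert setting: because $W$ is a Hilbert space you can invoke Riesz representation twice and bypass any Radon--Nikodym considerations entirely. The trade-off is that the Diestel--Uhl result would still apply if $W$ were merely a reflexive Banach space, whereas your proof uses the inner product structure in an essential way; for the purposes of this paper, where $W$ is always a Hilbert space, your approach is both sufficient and more transparent.
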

For a proof of the lemma see \cite[p.\ 98]{DU77}.
\begin{proof}[Proof of Proposition~\ref{prop:assocop}.]
	We define the bounded form 
	\[
		\tilde \fra \colon L^2(I;V) \times L^2(I;W) \to \K, \quad \tilde \fra(v,w) = \int_I \fra(t,v,w) \ \mathrm{d}{t},
	\]
	and we define 
	\(
		\tilde \A \in \mathcal{L}(L^2(I;V), (L^2(I;W))')
	\)
	by $\tilde \A v = \tilde\fra(v, \cdot)$.
	We set $\A := \Phi^{-1} \circ \tilde \A$,
	then by the definition of $\Phi$ and $\tilde \A$ we have
	\begin{equation*}\label{eq:defassociatedop}
		\int_I \langle \A v, w \rangle \ \mathrm{d}{t} = \langle \tilde A v, w \rangle = \tilde\fra(v,w) = \int_I \fra(t,v,w) \ \mathrm{d}{t}
	\end{equation*}
	for all $v \in L^2(I;V)$ and all $w \in L^2(I;W)$. 
\end{proof}

Let $\fra \colon I \times V \times W \to \K$ be a bounded non-autonomous form.
Then we call the operator $\A \in \mathcal{L}(L^2(I;V), L^2(I;W'))$ from Proposition~\ref{prop:assocop} the \emph{associated operator} of $\fra$ and we write $\A \sim \fra$.
Moreover we denote by $\A(t) \in \mathcal{L}(V, W')$ the operator $v \mapsto \fra(t,v, \cdot)$.

In the case that $V=W$ we call $\fra$ \emph{quasi-coercive} if there exist $\eta >0$ and $\omega \in \R$ such that
\begin{equation}\label{eq:qcoercive}
	\operatorname{Re} \fra(t,v,v)+\omega \norm{v}_H^2 \ge \eta\norm{v}_V^2 \quad (t \in I,\ v \in V)
\end{equation}
and \emph{coercive} if there exists $\eta >0$ such that
\begin{equation}\label{eq:coercive}
	\operatorname{Re} \fra(t,v,v) \ge \eta\norm{v}_V^2 \quad (t \in I,\ v \in V).
\end{equation}
Note that $\A+\omega$ and $\A(t)+\omega$ are invertible if $\fra \colon I\times V \times V\to \K$ is a non-autonomous bounded quasi-coercive form and $\A \sim \fra$.

\section{Maximal regularity for non-autonomous operators associated with forms on $\R$}
Let $V$ and $H$ be Hilbert spaces over the fied $\K$ with $V \overset d \hookrightarrow H$.
Suppose $\fra \colon \R \times V \times V \to \K$ is a bounded coercive non-autonomous form, where $M\ge 0$ and $\eta>0$ are constants such that \eqref{eq:Vbounded} and \eqref{eq:coercive} hold.
Let $\A \in \mathcal{L}(L^2(\R;V), L^2(\R;V'))$ be the associated operator of $\fra$.
\begin{theorem}\label{thm:weakMRinRforforms}
	For every $f \in L^2(\R;V')$ there exists a unique $u \in \textit{MR}_0(\A)$ such that
	\begin{equation}
		u' + \A u = f.
	\end{equation}
\end{theorem}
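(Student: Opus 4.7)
The plan is to show that the associated operator $\A$ of Proposition~\ref{prop:assocop} satisfies the hypotheses of Theorem~\ref{thm:weaksolutions}, and then to invoke that theorem directly.

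First I would verify boundedness of $\A \in \mathcal{L}(L^2(\R;V), L^2(\R;V'))$. From the definition of the associated operator via Lemma~\ref{lem:Phi}, together with the pointwise bound \eqref{eq:Vbounded}, for $v \in L^2(\R;V)$ and $w \in L^2(\R;V)$ one has
\[
	\Bigl\lvert \int_\R \langle \A v, w \rangle \ \mathrm{d}{t} \Bigr\rvert = \Bigl\lvert \int_\R \fra(t,v(t),w(t)) \ \mathrm{d}{t} \Bigr\rvert \le M \norm{v}_{L^2_V} \norm{w}_{L^2_V},
\]
so $\norm{\A}_{\mathcal{L}(L^2_V, L^2_{V'})} \le M$.

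Second I would verify the coercivity condition \eqref{eq:Acoercive}. Using the pointwise coercivity \eqref{eq:coercive} of $\fra$ and integrating, for every $v \in L^2(\R;V)$,
\[
	\operatorname{Re} \int_\R \langle \A v, v \rangle \ \mathrm{d}{t} = \operatorname{Re} \int_\R \fra(t,v(t),v(t)) \ \mathrm{d}{t} \ge \eta \int_\R \norm{v(t)}_V^2 \ \mathrm{d}{t} = \eta \norm{v}_{L^2_V}^2.
\]
Thus the hypotheses of Theorem~\ref{thm:weaksolutions} are satisfied with constants $M$ and $\eta$.

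Third, I would apply Theorem~\ref{thm:weaksolutions} to the operator $\A$: for any $f \in L^2(\R;V')$ there exists a unique $u \in \textit{MR}_0(\A) = L^2(\R;V) \cap H^1(\R;V')$ with $u' + \A u = f$ in $L^2(\R;V')$, which is exactly the claim. There is essentially no obstacle here, since the work has already been done in Section~3; the only subtle point is that coercivity is required rather than just the quasi-coercivity \eqref{eq:qcoercive} allowed for forms in general. In the quasi-coercive case one would first absorb $\omega$ by passing from $u$ to $e^{-\omega t} u$, but under the stated coercivity assumption the direct reduction above suffices.
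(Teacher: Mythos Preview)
Your proposal is correct and follows exactly the approach of the paper, which merely states that ``it is easy to check that $\A$ satisfies the conditions of Theorem~\ref{thm:weaksolutions}''. You have simply spelled out that easy check in detail; the final remark about the quasi-coercive case is extraneous here (the hypotheses of the theorem already assume coercivity), but harmless.
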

\begin{proof}
	It is easy to check, that $\A$ satisfies the conditions of Theorem~\ref{thm:weaksolutions}.
\end{proof}

Next we consider higher regularity. Let $\alpha \in (0,1/2]$, $\beta \in [0,\alpha)$ and let $\fra_1 \colon \R \times V \times V \to \K,\ \fra_2 \colon \R \times V \times H_{1+2\beta-2\alpha}\to \K$ be bounded non-autonomous forms and $\A_1 \sim\fra_1,\ \A_2 \sim\fra_2$. Thus there exist constants $M, M_2$ such that
\begin{align}
	\abs{\fra_1(t, v, w )} &\le M \norm{v}_{V} \norm{w}_{V}  & (t\in\R ,\, v,w \in V),\label{eq:asA_11}\\
	\abs{\fra_2(t, v, w) } &\le M_2 \norm{v}_{V} \norm{w}_{H_{1+2\beta -2\alpha}}  & (t\in\R ,\, v \in V,\, w \in H_{1+2\beta -2\alpha}).\label{eq:asA_12}
\end{align}
By the definition of $H_{2\alpha-2\beta-1}$ we have that it is a subspace of $V'$ thus the mapping $v \mapsto \A v :=(\A_1+\A_2)v$ defines a bounded operator from $L^2(\R;V)$ to $L^2(\R;V')$.
Moreover, we suppose that there exist constants $\eta>0$ and $\eta_2 < \eta$ such that
\begin{align}
	\operatorname{Re}  \fra_1 (t, v, v ) &\ge \eta \norm{v}_{V}^2  &(t\in\R ,\, v \in V), \label{eq:asA_21}\\
	\operatorname{Re} \fra_2 (t, v, v ) &\ge - \eta_2 \norm{v}_{V}^2 &(t\in\R ,\, v \in V). \label{eq:asA_22}
\end{align}
Note that the form $\fra \colon \R \times V \times V \to \K$, $\fra = \fra_1+\fra_2$ satisfies the conditions of Theorem~\ref{thm:weakMRinRforforms}.

\begin{theorem}\label{thm:MRinRforforms}
	Suppose in addition that $\A_1(\cdot) \in \mathring W^{\alpha+\delta_0, \frac 1 {\alpha}}(\R;\mathcal{L}(V,V'))$ 
	and if $\beta >0$ that $\A_2(\cdot) \in \mathring W^{\beta+\delta_0, \frac 1 {\beta}}(\R;\mathcal{L}(V,H_{2\alpha-2\beta-1}))$ for some $\delta_0 >0$.
	Then for every $f \in L^2(\R;H_{2\alpha-1})$ there exists a unique $u \in \textit{MR}_\alpha(\A)$ such that
	\begin{equation}
		u' + \A u = f.
	\end{equation}
	Moreover, $\textit{MR}_\alpha(\A) \hookrightarrow H^{\alpha}(\R;V)$.
\end{theorem}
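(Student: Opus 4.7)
The plan is to apply the abstract Theorem~\ref{thm:MRonR} to the splitting $\A = \A_1 + \A_2$, so the task reduces to verifying the seven structural hypotheses \eqref{eq:A1bounded}--\eqref{eq:A_22}. The ``static'' conditions \eqref{eq:A1bounded}, \eqref{eq:A1coercive}, \eqref{eq:A2bounded} and \eqref{eq:A_2} follow at once from the pointwise bounds \eqref{eq:asA_11}--\eqref{eq:asA_22} integrated over $\R$; for \eqref{eq:A2bounded} one also uses the continuous embedding $H_{2\alpha-2\beta-1}\hookrightarrow H_{-1}=V'$ that follows from Proposition~\ref{prop:complexinterpolation} since $\beta < \alpha\le 1/2$.

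The substance lies in the three fractional-derivative conditions \eqref{eq:A1bounded2}, \eqref{eq:alphaA1coercive} and \eqref{eq:A_22}. The organising identity is the fractional Leibniz rule
\[
\partial^\alpha(\A_1 v) = \A_1\,\partial^\alpha v + [\partial^\alpha,\A_1]\,v,
\]
interpreted pointwise in $t$. The assumption $\A_1(\cdot)\in\mathring W^{\alpha+\delta_0,1/\alpha}(\R;\mathcal{L}(V,V'))$ sits at the critical Sobolev embedding point ($\alpha\cdot(1/\alpha)=1$) and the slack $\delta_0>0$ gives $\A_1(\cdot)\in C^{\delta_0}\hookrightarrow L^\infty$. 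From the appendix on Banach-valued fractional Sobolev spaces I would invoke a Kato--Ponce-type commutator estimate of the shape
\[
\norm{[\partial^\alpha,\A_1]v}_{L^2_{V'}} \le C\,\norm{\A_1}_{\mathring W^{\alpha+\delta_0,1/\alpha}}\,\norm{v}_{H^{\alpha-\sigma}_V}
\]
for some $\sigma=\sigma(\delta_0)>0$. Condition \eqref{eq:A1bounded2} then drops out together with the trivial bound $\norm{\A_1 \partial^\alpha v}_{L^2_{V'}}\le M\norm{\partial^\alpha v}_{L^2_V}$. For \eqref{eq:alphaA1coercive}, the main term $\operatorname{Re}\int_\R\langle\A_1\partial^\alpha v,\partial^\alpha v\rangle\,\mathrm{d}t\ge\eta\norm{\partial^\alpha v}_{L^2_V}^2$ is immediate from \eqref{eq:asA_21} applied pointwise to $\partial^\alpha v(t)\in V$, and the commutator remainder is absorbed by interpolating $H^{\alpha-\sigma}_V$ between $L^2_V$ and $H^\alpha_V$ and invoking Young's inequality, producing the desired $\varepsilon\norm{\partial^\alpha v}_{L^2_V}^2 + C_\varepsilon\norm{v}_{L^2_V}^2$ split in which $\varepsilon$ is arbitrarily small.

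For \eqref{eq:A_22} the idea is to shift $\beta$ derivatives off $w$ onto $\A_2 v$ by fractional integration by parts; schematically
\[
\Bigl|\int_\R\langle\A_2 v,\partial^\alpha\partial^{\alpha*}w\rangle\,\mathrm{d}t\Bigr| \le \norm{\partial^\beta(\A_2 v)}_{L^2_{H_{2\alpha-2\beta-1}}} \,\norm{T w}_{L^2_{H_{1+2\beta-2\alpha}}},
\]
where $T$ is the Fourier multiplier of order $2\alpha-\beta$ associated with $\partial^{\beta*}\partial^\alpha\partial^{\alpha*}$. The first factor is controlled via the same Leibniz decomposition applied to $\A_2$, now exploiting its $\mathring W^{\beta+\delta_0,1/\beta}$-regularity when $\beta>0$ (if $\beta=0$ only the $L^\infty$ bound \eqref{eq:asA_12} is needed and no commutator arises), yielding $\norm{\partial^\beta v}_{L^2_V}+\norm{v}_{L^2_V}$; the interpolation estimate $\norm{\partial^\beta v}_{L^2_V}\le\norm{\partial^\alpha v}_{L^2_V}^{\beta/\alpha}\norm{v}_{L^2_V}^{1-\beta/\alpha}$ followed by Young's inequality produces precisely the factor $\varepsilon\norm{\partial^\alpha v}_{L^2_V}+c_\varepsilon\norm{v}_{L^2_V}$ required by \eqref{eq:A_22}. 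The second factor is bounded by $\norm{w}_{H^\alpha_V}+\norm{\partial^{2\alpha}w}_{L^2_{H_{1-2\alpha}}}$ by Plancherel together with Lemma~\ref{lem:interpolation} applied at the right interpolation parameter.

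The main obstacle is establishing the Kato--Ponce commutator estimate at the critical Sobolev exponent $1/\alpha$ (respectively $1/\beta$) for operator-valued multipliers, and tracking the small slack $\delta_0$ carefully enough to extract an arbitrarily small prefactor in front of the top-order norm; this is the essential harmonic-analytic input supplied by the appendix. Once those estimates are in place all seven hypotheses of Theorem~\ref{thm:MRonR} are verified, and that theorem directly delivers existence and uniqueness in $\textit{MR}_\alpha(\A)$ together with the embedding $\textit{MR}_\alpha(\A)\hookrightarrow\V_\alpha\hookrightarrow H^\alpha(\R;V)$.
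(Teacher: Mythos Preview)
Your strategy—verify the hypotheses of Theorem~\ref{thm:MRonR}—is exactly the paper's, and your treatment of the static conditions and of \eqref{eq:A_22} matches it closely. The one substantive difference is how you obtain the fractional coercivity \eqref{eq:alphaA1coercive}. You split on the Fourier side, $\partial^\alpha(\A_1 v)=\A_1\,\partial^\alpha v+[\partial^\alpha,\A_1]v$, apply pointwise coercivity to $\partial^\alpha v(t)$, and then want a Kato--Ponce bound with an $H^{\alpha-\sigma}$ gain on the commutator. The paper instead invokes the bilinear Gagliardo identity (Corollary~\ref{cor:isometry}) to rewrite $\int_\R\langle\partial^\alpha\A_1 v,\partial^\alpha v\rangle\,\mathrm{d}t$ as a double integral over $\R^2$ and splits the \emph{integrand} as
\[
\langle\A_1(t)(v(t)-v(s)),v(t)-v(s)\rangle+\langle(\A_1(t)-\A_1(s))v(s),v(t)-v(s)\rangle;
\]
the first piece is $\ge\eta\norm{v(t)-v(s)}_V^2$ pointwise, and the second is controlled by Lemma~\ref{lem:comest} (which sits in Section~5, not the appendix). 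That lemma already delivers the $\varepsilon$--$c_\varepsilon$ split directly, so no intermediate $\sigma$-gain is needed; the same lemma also yields \eqref{eq:A1bounded2}. Your commutator route would work too, but it requires estimating $\norm{[\partial^\alpha,\A_1]v}_{L^2_{V'}}$ as an honest $L^2$ norm rather than as a Gagliardo cross term, which is an extra step not carried out in the paper; the Gagliardo splitting bypasses this and is the cleaner execution of the same idea.
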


For the proof it will be crucial to control the commutator $\abs\partial^\alpha \A_1 v- \A_1 \abs\partial^\alpha v$.
This will be established by the following lemma.
\begin{lemma}\label{lem:comest}
Let $X, Y$ be Hilbert spaces, $\gamma \in (0, 1/2]$ and $\G\in L^\infty(\R;\mathcal{L}(X;Y)) \cap \mathring W^{\gamma+\delta_0, \frac 1 \gamma}(\R;\mathcal{L}(X;Y))$ for some $\delta_0>0$.
We claim that for every $\varepsilon>0$ there exists some constant $c_\varepsilon$ such that
\begin{multline}\label{eq:comest}
	\left(\int_\R \int_\R \frac{\norm{(\G(t)-\G(s))v(s)}_{Y}^2}{\abs{t-s}^{1+2\gamma}} \ \mathrm{d}{s} \ \mathrm{d}{t} \right)^{1/2}\\
		 \le \varepsilon \norm{\partial^\gamma v}_{L^2(\R;X)} + c_\varepsilon \norm{ v}_{L^2(\R;X)} \quad (v \in H^\gamma(\R;X)).
\end{multline}
Moreover, the mapping $u(\cdot) \mapsto \G(\cdot) u(\cdot)$ belongs to $\mathcal{L}(H^\gamma(\R;X), H^\gamma(\R;Y))$.
\end{lemma}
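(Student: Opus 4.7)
The plan is to reduce the multiplier statement to \eqref{eq:comest} and then to prove \eqref{eq:comest} itself by splitting the Slobodeckij integral at a threshold $|t-s|=r$ that is optimised in terms of $\varepsilon$.

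Once \eqref{eq:comest} is in hand, the multiplier statement follows from the telescoping identity
\[
\G(t)u(t)-\G(s)u(s) = (\G(t)-\G(s))u(s) + \G(t)(u(t)-u(s))
\]
inside the Gagliardo seminorm of $\G(\cdot)u(\cdot)$ on $H^\gamma(\R;Y)$: the first summand is controlled by \eqref{eq:comest} applied with $\varepsilon=1$, the second by $\norm{\G}_{L^\infty}[u]_{H^\gamma(\R;X)}$; combined with $\norm{\G u}_{L^2(\R;Y)}\le\norm{\G}_{L^\infty}\norm{u}_{L^2(\R;X)}$ this yields boundedness of the multiplication.

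For \eqref{eq:comest} I would start with the pointwise estimate $\norm{(\G(t)-\G(s))v(s)}_Y\le\norm{\G(t)-\G(s)}_{\mathcal{L}(X;Y)}\norm{v(s)}_X$, switch to $h=t-s$, and split at $|h|=r$. On $|h|\ge r$, the crude bound $\norm{\G(t)-\G(s)}\le 2\norm{\G}_{L^\infty}$ together with the tail of the radial weight contributes $\lesssim\norm{\G}_{L^\infty}^2\, r^{-2\gamma}\norm{v}_{L^2(\R;X)}^2$. On $|h|<r$, assuming first $\gamma<1/2$, I would apply Hölder's inequality in $s$ with exponents $1/(2\gamma)$ and $1/(1-2\gamma)$: the $\G$-factor becomes $(\int\norm{\G(s+h)-\G(s)}^{1/\gamma}\,\mathrm{d}s)^{2\gamma}$, matched to the hypothesised $1/\gamma$-integrability, and the $v$-factor is $\norm{v}_{L^{2/(1-2\gamma)}(\R;X)}^2$, dominated by $\norm{v}_{H^\gamma(\R;X)}^2$ via the Sobolev embedding. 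A second Hölder inequality in $h$ with the same exponents reintroduces the Slobodeckij weight $|h|^{-2-\delta_0/\gamma}$ of the hypothesis; the leftover power of $|h|$ has exponent $(2\delta_0+2\gamma-1)/(1-2\gamma)>-1$ precisely because $\delta_0>0$, so its integral over $|h|<r$ contributes an additional factor $r^{2\delta_0}$. Combining the two regimes yields schematically
\[
\text{LHS of \eqref{eq:comest}}\;\lesssim\; r^{\delta_0}[\G]_{\mathring W^{\gamma+\delta_0,1/\gamma}}\bigl(\norm{\partial^\gamma v}_{L^2(\R;X)}+\norm{v}_{L^2(\R;X)}\bigr)+\norm{\G}_{L^\infty}\, r^{-\gamma}\norm{v}_{L^2(\R;X)},
\]
and choosing $r$ in terms of $\varepsilon$ so that the first prefactor equals $\varepsilon$ produces \eqref{eq:comest} with $c_\varepsilon$ of order $r^{-\gamma}$.

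The main obstacle is the endpoint $\gamma=1/2$, at which $1/(1-2\gamma)=\infty$ and the Sobolev embedding $H^{1/2}(\R)\hookrightarrow L^\infty$ fails. Here I would exploit that $\mathring W^{1/2+\delta_0,2}\hookrightarrow C^{0,\delta_0}$, providing a pointwise Hölder bound $\norm{\G(t)-\G(s)}\le C|t-s|^{\delta_0}$, and interpolate $\norm{\G(s+h)-\G(s)}^2\le C^{2-\theta}|h|^{\delta_0(2-\theta)}\norm{\G(s+h)-\G(s)}^\theta$ for a suitable $\theta\in(2-4\delta_0,2)$. A Hölder step in $s$ (using the still valid embedding $H^{1/2}(\R)\hookrightarrow L^q(\R)$ for every finite $q$) followed by a Hölder step in $h$ against the Slobodeckij weight $|h|^{-2-2\delta_0}$ of $[\G]_{\mathring W^{1/2+\delta_0,2}}$ then runs through as before; the constraint $\theta>2-4\delta_0$ is exactly what ensures convergence of the residual $|h|$-integral near zero and a positive power of $r$ as the small-scale prefactor.
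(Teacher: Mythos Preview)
Your argument is correct and follows the same overall architecture as the paper's proof: split the Slobodeckij double integral at a threshold, control the far part by the $L^\infty$ bound on $\G$, and control the near part by H\"older's inequality against the $\mathring W^{\gamma+\delta_0,1/\gamma}$ seminorm of $\G$ together with a Sobolev embedding for $v$. The difference lies in the choice of H\"older exponents. You use the critical pair $\bigl(\tfrac{1}{2\gamma},\tfrac{1}{1-2\gamma}\bigr)$, which produces the critical Sobolev norm $\norm{v}_{L^{2/(1-2\gamma)}}$ and forces you to extract the $\varepsilon$-smallness from the factor $r^{\delta_0}$; this in turn breaks down at $\gamma=\tfrac12$ and requires your separate endpoint argument via the H\"older continuity of $\G$. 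The paper instead perturbs the exponents slightly, taking $p>\tfrac{1}{\gamma}$ (and correspondingly $q<\tfrac{2}{1-2\gamma}$), so that the $v$-factor lands in a \emph{sub}critical $L^q$ space for which the Ehrling-type estimate $\norm{v}_{L^q}\le\varepsilon\norm{\partial^\gamma v}_{L^2}+c_\varepsilon\norm{v}_{L^2}$ (Lemma~\ref{lem:compembedding}) is available; the $\varepsilon$-smallness then comes from this embedding rather than from the threshold, and the argument goes through uniformly for all $\gamma\in(0,\tfrac12]$ without a case distinction. Your route is more direct in its use of the $\mathring W^{\gamma+\delta_0,1/\gamma}$ hypothesis (no auxiliary $\delta<\delta_0$, no interpolation of $\norm{\G(t)-\G(s)}^p$ against the $L^\infty$ bound), while the paper's route buys a single unified proof and avoids invoking the endpoint Sobolev embedding.
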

\begin{proof}
Let $0<h<1$ and $M:= \norm{\G(t)}_{L^\infty(\R;\mathcal{L}(X,Y))}$.
We obtain by Fubini's theorem that
\begin{multline*}
	\left(\int_\R \int_{\R\setminus (t-h,t+h)} \frac{\norm{(\G(t)-\G(s))v(s)}_{Y}^2}{\abs{t-s}^{1+2\gamma}} \ \mathrm{d}{s} \ \mathrm{d}{t} \right)^{1/2}\\
		\le 2M\left(\int_\R \int_{\R\setminus (t-h,t+h)} \frac{\norm{v(s)}_{X}^2}{\abs{t-s}^{1+2\gamma}} \ \mathrm{d}{s} \ \mathrm{d}{t} \right)^{1/2}\\
		= 2M\left(\int_\R \int_{\R\setminus (s-h,s+h)} \frac{1}{\abs{t-s}^{1+2\gamma}} \ \mathrm{d}{t} \ \norm{v(s)}_{X}^2\ \mathrm{d}{s} \right)^{1/2}
		= \frac{2M}{\sqrt\gamma h^\gamma} \norm{v}_{L^2(\R;X)}
\end{multline*}
for all $v \in L^2(\R;X)$.
Next choose $0<\delta<\delta_0$ and $p> \frac 1 \gamma$ such that $p(\gamma+\delta) \le \frac 1 \gamma(\gamma+\delta_0)$.
Let $q$ be such that $\frac 1 2 = \frac 1 p + \frac 1 q$. For $v \in H^\gamma(\R;X)$, we obtain by Hölder's inequality
\begin{multline*}
	\left(\int_\R \int_{t-h}^{t+h} \frac{\norm{(\G(t)-\G(s))v(s)}_Y^2}{\abs{t-s}^{1+2\gamma}} \ \mathrm{d}{s} \ \mathrm{d}{t} \right)^{1/2}\\
		\le \left(\int_\R \int_{t-h}^{t+h} \frac{\norm{\G(t)-\G(s)}_{\mathcal{L}(X,Y)}^p}{\abs{t-s}^{1+p(\gamma+\delta)}} \ \mathrm{d}{s} \ \mathrm{d}{t}\right)^{1/p}
			  \left(\int_\R  \int_{t-h}^{t+h} \frac{\norm{v(s)}_{X}^{q}}{\abs{t-s}^{1-\delta q}} \ \mathrm{d}{s} \ \mathrm{d}{t}\right)^{1/q}.
\end{multline*}
Using the uniform boundedness of $\G$ by the constant $M$ we obtain that the first term on the right hand side is bounded by
\[
	(2M)^{\frac{\gamma p-1}{\gamma p}} [\G]^{\frac 1{p\gamma}}_{W^{\gamma+\delta_0, \frac 1 \gamma}(\R; \mathcal{L}(X; Y) )}, 
\]
which is finite by our assumptions on the form.
By Fubini's theorem we obtain for the second term that
\begin{multline*}
	\left(\int_\R  \int_{t-h}^{t+h} \frac{\norm{v(s)}_{X}^{q}}{\abs{t-s}^{1-\delta q}} \ \mathrm{d}{s} \ \mathrm{d}{t}\right)^{1/q}
	 = \left(\int_\R  \int_{s-h}^{s+h} \abs{t-s}^{\delta q-1} \ \mathrm{d}{t}  \ \norm{v(s)}_{X}^{q} \ \mathrm{d}{s}\right)^{1/q}\\
	 =  h^{\delta }\left(\frac2{\delta q} \int_\R  \norm{v(s)}_{X}^{q}\ \mathrm{d}{s}\right)^{1/q}.
\end{multline*}
Since $p> \frac 1 \gamma$ we obtain that $q < \frac 2{1-2\gamma}$ if $\gamma < \frac 1 2$ and $q<\infty$ if $\gamma = \frac 1 2$.
Now the inequality \eqref{eq:comest} follows by Lemma~\ref{lem:compembedding}.

Let $v \in H^\gamma(\R;X)$, then by Proposition~\ref{prop:isometry}
\begin{align*}
	C_\gamma \norm{\partial^\gamma \G v}_{L^2(\R;Y)} 
		&= \left(\int_\R\int_\R \frac{\norm{\G v(t)- \G v(s)}^2_{Y}}{\abs{t-s}^{1+2\gamma}} \ \mathrm{d} s \ \mathrm{d}{t} \right)^{1/2}\\
		&\le \left(\int_\R\int_\R \frac{\norm{\G(t) ( v(t)-  v(s))}^2_{Y}}{\abs{t-s}^{1+2\gamma}} \ \mathrm{d} s \ \mathrm{d}{t} \right)^{1/2}\\
			&\quad + \left(\int_\R\int_\R \frac{\norm{(\G (t)- \G(s)) v(s)}^2_{Y}}{\abs{t-s}^{1+2\gamma}} \ \mathrm{d} s \ \mathrm{d}{t} \right)^{1/2}\\
		&\le (M C_\gamma+ 1 ) \norm{\partial^\gamma v}_{L^2(\R;X)} + c_1 \norm{v}_{L^2(\R;X)},
\end{align*}
where $c_1$ is the constant from \eqref{eq:comest}.
Thus the mapping $u(\cdot) \mapsto \G(\cdot) u(\cdot)$ belongs to $\mathcal{L}(H^\gamma(\R;X), H^\gamma(\R;Y))$.
\end{proof}

\begin{proof}[Proof of Theorem~\ref{thm:MRinRforforms}]
It is our goal to show that $\A_1$ and $\A_2$ satisfy the conditions of Theorem~\ref{thm:MRonR}.
First we consider the operator $\A_1$. Note that \eqref{eq:asA_11} implies \eqref{eq:A1bounded} and \eqref{eq:asA_12} implies \eqref{eq:A1coercive}.
By Lemma~\ref{lem:comest} we obtain that $\A_1$ satisfies \eqref{eq:A1bounded2}.
Next we show that $\A_1$ satisfies \eqref{eq:alphaA1coercive}.
Let $v \in H^\alpha(\R;V)$, then by Corollary~\ref{cor:isometry}
\begin{multline*}
	C_\alpha \operatorname{Re} \int_\R \langle \partial^\alpha \A_1 v(t),  \partial^\alpha v(t)\rangle  \ \mathrm{d}{t}\\
		= \operatorname{Re} \int_\R\int_\R \frac{\langle (\A_1 v(t)- \A_1 v(s)), v(t)-v(s) \rangle}{\abs{t-s}^{1+2\alpha}} \ \mathrm{d} s \ \mathrm{d}{t}\\
		=\operatorname{Re} \int_\R\int_\R \frac{\langle \A_1(t) ( v(t)- v(s)), v(t)-v(s) \rangle} {\abs{t-s}^{1+2\alpha}}\ \mathrm{d} s \ \mathrm{d}{t}\\ 
		\quad\qquad+ \operatorname{Re}\int_\R\int_\R \frac{\langle (\A_1(t)-\A_1(s)) v(s), v(t)-v(s) \rangle}{\abs{t-s}^{1+2\alpha}} \ \mathrm{d} s \ \mathrm{d}{t}\\
		\ge \eta C_\alpha\norm{\partial^\alpha v}_{L^2(\R;V)}^2 - \left[\varepsilon \norm{\partial^\alpha v}_{L^2(\R;V)} + c_\varepsilon \norm{v}_{L^2(\R;V)}\right]
			\sqrt{C_\alpha}\norm{\partial^\alpha v}_{L^2(\R;V)}.
\end{multline*}
Here we use again Lemma~\ref{lem:comest} with $\gamma=\alpha$ for some $\varepsilon>0$.
If we choose $\varepsilon < \eta \sqrt{C_\alpha}$, then the desired estimate \eqref{eq:alphaA1coercive} follows by Young's inequality.

Next we consider the operator $\A_2$. The assumptions \eqref{eq:A2bounded} and \eqref{eq:A_2} are satisfied by \eqref{eq:asA_12} and \eqref{eq:asA_22}.
In the case $\beta=0$ the assumption \eqref{eq:A_22} is satisfied without any further conditions on $\A_2$.

For $\beta>0$ we have by Lemma~\ref{lem:comest}
\begin{multline*}
	\left\lvert\int_\R \langle \A_2 v,  \partial^\alpha \partial^{\alpha*} w \rangle \ \mathrm{d}{t} \right\rvert
		= \left\lvert \int_\R ( \B^{\alpha-\beta- 1 / 2} \partial^\beta \A_2 v \, \vert \, \B^{1/2 -\alpha+\beta}  \partial^\alpha \partial^{(\alpha-\beta)*} w )_H \ \mathrm{d}{t} \right\rvert \\
		\le \norm{\partial^\beta \A_2 v}_{L^2_{H_{2\alpha-2\beta-1}}} \norm{\partial^\alpha \partial^{(\alpha-\beta)*} w}_{L^2_{H_{1-2\alpha+2\beta}}}\\
		\le C \norm{v}_{H^\beta(\R;V)} \norm{\partial^{2\alpha-\beta} w}_{L^2_{H_{1-2\alpha+2\beta}}} \\
	  \le \left[\varepsilon \norm{\partial^\alpha v}_{L^2_V} + c_\varepsilon \norm{v}_{L^2_V}\right] 
		\left[ \norm{w}_{H^\alpha(\R;V)} + \norm{\partial^{2\alpha} w}_{L^2(\R;H_{1-2\alpha})} \right].
\end{multline*}
for all $v \in H^{\alpha}(\R;V) ,\, w \in H^{2\alpha}(\R;V)$, for any $\varepsilon>0$ and sufficiently large $c_\varepsilon$.
Here we apply the complex interpolation inequality (see \cite[p.~53]{Lun09}) on the space
\[
	H^{2\alpha-\beta}(\R; H_{1-2\alpha+2\beta}) = [H^\alpha(\R;H_1), H^{2\alpha}(\R;H_{1-2\alpha})]_{\frac{\alpha-\beta}\alpha}
\]
and Young's inequality.
Thus $\A_1$ and $\A_2$ satisfy the conditions of Theorem~\ref{thm:MRonR}.
\end{proof}

\begin{remark}
	The theorem extends to more general perturbations than $\A_2$, e.g.
	sums of such operators, provided that condition \eqref{eq:asA_22} holds for the sum of these operators.
\end{remark}
\begin{remark}
	The statement of Theorem~\ref{thm:MRinRforforms} implies that the mapping 
	\[
		T\colon \textit{MR}_\alpha(\A) \to L^2(\R;H_{2\alpha-1}), \quad Tu =(\partial+\A)u
	\]
	defines an isomorphism.
	Thus $T$ and $T^{-1}$ are bounded operators by the closed graph theorem.
	On the other hand we may see by our proofs that these bounds depend only on the constants 
	appearing in the conditions of the theorem.
\end{remark}
\section{Initial value problems}

Let $V, H$ be Hilbert spaces over the field $\K$ with $V \overset d \hookrightarrow H$ and let $I =[0,T]$ where $T>0$.
Suppose $\fra \colon I \times V \times V \to \K$ is a bounded coercive non-autonomous form, 
where $M\ge 0$, $\eta>0$ and $\omega \ge 0$ are constants such that \eqref{eq:Vbounded} and \eqref{eq:qcoercive} hold.
Let $\A \in \mathcal{L}(L^2(I;V), L^2(I;V'))$ be the associated operator of $\fra$.
For $\alpha \in [0,1/2]$ we define the maximal regularity (Hilbert) space 
\[
	\textit{MR}_{\alpha}(I;\A) := \{v \in H^1(I;H_{2\alpha-1}) \cap L^2(I;V) : \A v \in L^2(I;H_{2\alpha-1})\}
\]
with norm
\[
	\norm{v}_{\textit{MR}_{\alpha}(I;\A)}^2 := \norm{v'}_{L^2(I;H_{2\alpha-1})}^2 + \norm{\A v}_{L^2(I;H_{2\alpha-1})}^2.
\]
Note that $\textit{MR}_0(I;\A) \hookrightarrow C(I; H)$ by \cite[p.~106, Proposition~1.2]{Sho97}.
\begin{theorem}\label{thm:weakMRinRforformsonI}
	For every $f \in L^2(I;V')$ and $u_0 \in H$ there exists a unique $u \in \textit{MR}_0(I;\A) = H^1(I;V')\cap L^2(I;V)$ with
	\begin{equation}\label{eq:weakIVP}
		u' + \A u = f,\quad  u(0) = u_0.
	\end{equation}
\end{theorem}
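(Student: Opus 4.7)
The plan is to reduce the initial value problem on $I$ to the $\R$-problem solved in Theorem~\ref{thm:weakMRinRforforms}. As a preliminary reduction, the substitution $u(t) = e^{\omega t}\tilde u(t)$ transforms the problem into $\tilde u' + (\A+\omega)\tilde u = e^{-\omega t}f$ with $\tilde u(0) = u_0$, whose associated form $\fra + \omega(\cdot\mid\cdot)_H$ is coercive; hence I may assume $\omega=0$, i.e.\ $\fra$ is coercive. I then extend $\fra$ to a bounded coercive non-autonomous form $\tilde\fra$ on $\R \times V \times V$ by freezing the coefficients, e.g.\ $\tilde\fra(t,v,w) = \fra(0,v,w)$ for $t<0$ and $\tilde\fra(t,v,w) = \fra(T,v,w)$ for $t>T$, and denote by $\tilde\A$ its associated operator.

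By linearity it suffices to treat separately (a) the case $u_0 = 0$ and (b) the case $f=0$. For (a), I extend $f$ by zero to $\tilde f \in L^2(\R;V')$ and apply Theorem~\ref{thm:weakMRinRforforms} to obtain $\tilde v \in \textit{MR}_0(\tilde\A)$ solving $\tilde v' + \tilde\A \tilde v = \tilde f$. Because $\tilde\A$ acts pointwise in $t$ through the form, it commutes with multiplication by $\1_{(-\infty,0)}$; Proposition~\ref{prop:evolution} then forces $\tilde v \equiv 0$ on $(-\infty, 0]$, in particular $\tilde v(0)=0$. Since $\tilde\fra = \fra$ on $I$, the restriction $\tilde v|_I$ lies in $\textit{MR}_0(I;\A)$ and solves (a).

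For (b), I use a lifting argument. The trace map $H^1(I;V') \cap L^2(I;V) \to H$, $u \mapsto u(0)$, is surjective by the classical real interpolation identity $(V,V')_{1/2,2} = H$ (see, e.g., \cite[p.~106]{Sho97}), so I can choose $\eta \in H^1(I;V') \cap L^2(I;V)$ with $\eta(0) = u_0$. Setting $g := -\eta' - \A\eta \in L^2(I;V')$ and solving $z' + \A z = g$, $z(0)=0$ via part (a), the function $w := \eta + z$ solves (b).

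Uniqueness follows by the energy argument already used in Proposition~\ref{prop:evolution}: if $u \in \textit{MR}_0(I;\A)$ satisfies $u' + \A u = 0$ and $u(0)=0$, integrating $\frac{d}{dt}\norm{u(t)}_H^2 = 2\operatorname{Re}\langle u'(t), u(t)\rangle$ and using the coercivity of $\fra$ yields $\eta \int_0^t \norm{u}_V^2 \, \mathrm{d}s \le -\tfrac12\norm{u(t)}_H^2 \le 0$, so $u \equiv 0$. The main obstacle in carrying out this plan is ensuring a clean interface between the $\R$-result and the interval problem: the extension of $\fra$ must preserve both boundedness and coercivity, the causality in (a) must be invoked with the right operator, and the trace surjectivity in (b) must be cited correctly. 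Once these standard ingredients are in place, the reduction is essentially mechanical.
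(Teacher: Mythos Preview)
Your overall strategy matches the paper's: rescale to make $\fra$ coercive, extend the form to $\R$, handle $u_0=0$ by zero-extension of $f$ plus causality (Proposition~\ref{prop:evolution}), and treat $u_0\neq 0$ by a trace/lifting argument. Uniqueness via the energy estimate is exactly as in the paper.

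There is one concrete flaw. Your extension $\tilde\fra(t,\cdot,\cdot)=\fra(0,\cdot,\cdot)$ for $t<0$ and $\fra(T,\cdot,\cdot)$ for $t>T$ is not well-defined: the form is only assumed measurable in $t$, so pointwise values $\fra(0,\cdot,\cdot)$, $\fra(T,\cdot,\cdot)$ have no meaning. The paper avoids this by extending with the time average $\tfrac{1}{T}\int_0^T\fra(t,\cdot,\cdot)\,\mathrm{d}t$, which is automatically bounded and coercive with the same constants; any fixed bounded coercive autonomous form (e.g.\ $(\,\cdot\mid\cdot\,)_V$) would work equally well. Once you replace your endpoint-freezing by such an extension, the rest of your argument goes through.

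For the record, the paper organizes the case $u_0\neq 0$ slightly differently: instead of lifting on $I$ and reducing to the homogeneous initial value, it produces $v\in\textit{MR}_0([0,\infty);\A)$ with $v(0)=u_0$ via the trace method, reflects it to $(-\infty,0]$, and uses this to extend $f$ to all of $\R$ before invoking Theorem~\ref{thm:weakMRinRforforms}. Your lifting-on-$I$ variant is equivalent and arguably cleaner.
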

This result is well known, at least in the case that $H$ is separable (see \cite[p.\ 513]{DL92}). In order to illustrate our strategy of reducing the case of an interval to $\R$ and for the sake of completeness we provide a proof.
\begin{proof}[Proof of Theorem~\ref{thm:weakMRinRforformsonI}]
	First note that $u \in \textit{MR}_0(I;\A)$ is a solution of $u' + \A u = f$, $u(0) = u_0$ 
	if and only if $v(t):=e^{-\omega t}u(t) \in \textit{MR}_0(I;\A)$ is a solution of $v' + (\A+\omega) v = e^{-\omega \cdot}f$, $v(0) = u_0$.
	Thus we may assume that $\fra$ is coercive, i.e.\ $\omega = 0$.
	
	It is our goal to apply Theorem~\ref{thm:weakMRinRforforms}.
	We extend $\fra$ on the complement of $I$ by $\frac 1 T \int_0^T \fra(t,\cdot, \cdot) \, \mathrm{d}{t}$.
	We denote this extension again by $\fra$. Then $\fra \colon \R \times V \times V \to \K$ is a bounded coercive non-autonomous form.
	In particular \eqref{eq:Vbounded} and \eqref{eq:qcoercive} hold with the same constants $M\ge 0$ and $\eta>0$.
	
	In the case $u_0=0$ we extend $f$ by $0$ on the complement of $I$, 
	then the restriction to $I$ of the solution $u$ given by Theorem~\ref{thm:weakMRinRforforms} satisfies \eqref{eq:weakIVP} and is unique by
	Proposition~\ref{prop:evolution}.
	
	For the case $u_0\in H\setminus\{0\}$ note that $H=[V,V']_{1/2}=(V,V')_{1/2,2}$ since $V$ and $V'$ are Hilbert spaces.
	By the trace method for the real interpolation spaces (see \cite[Corollary~1.14]{Lun09}) there exists some $v \in \textit{MR}_0([0,\infty);\A)$ with $v(0) =u_0$.
	We set $w(t):=v(-t) \in \textit{MR}_0((-\infty, 0];\A)$ and we extend $f$ to $\R$ by $w'(t)+ (\A w)(t)$ on $(-\infty, 0)$ and by $0$ on $(T,\infty)$.
	Again the restriction to $I$ of the solution $u$ given by Theorem~\ref{thm:weakMRinRforforms} satisfies \eqref{eq:weakIVP} and is unique by
	Proposition~\ref{prop:evolution}.
\end{proof}

Next we consider higher regularity. Let $\alpha \in (0,1/2]$, $\beta \in [0,\alpha)$ and let $\fra_1 \colon I \times V \times V \to \K,\ \fra_2 \colon V \times H_{1+2\beta-2\alpha}\to \K$ be bounded non-autonomous forms and $\A_1 \sim\fra_1,\ \A_2 \sim\fra_2$. Thus there exist constants $M, M_2$ such that
\begin{align}
	\abs{\fra_1(t, v, w )} &\le M \norm{v}_{V} \norm{w}_{V}  & (t\in I,\, v,w \in V),\\
	\abs{\fra_2(t, v, w)} &\le M_2 \norm{v}_{V} \norm{w}_{H_{1+2\beta -2\alpha}}  & (t\in I,\, v \in V,\, w \in H_{1+2\beta -2\alpha}).\label{eq:IboundA2}
\end{align}
By the definition of $H_{1+2\beta-2\alpha}$ we have that $V$ is a subspace of $H_{1+2\beta-2\alpha}$, thus the mapping $v \mapsto \A v :=(\A_1+\A_2)v$ defines a bounded operator from $L^2(I;V)$ to $L^2(I;V')$.
Moreover, we suppose that $\fra_1$ is quasi-coercive; i.e., there exists constants $\eta>0$ and $\omega \in\R$ such that
\begin{align}
	\operatorname{Re}  \fra_1 (t, v, v ) + \omega \norm{v}_H  &\ge \eta \norm{v}_{V}^2  &(t\in I,\, v \in V). \label{eq:asA_21I}
\end{align}

\begin{theorem}\label{thm:MRinRforformsonI}
	In addition suppose that $\A_1(\cdot) \in \mathring W^{\alpha+\delta_0, \frac 1 {\alpha}}(I;\mathcal{L}(V,V'))$ 
	and if $\beta >0$ that $\A_2(\cdot) \in \mathring W^{\beta+\delta_0, \frac 1 {\beta}}(I;\mathcal{L}(V,H_{2\alpha-2\beta-1}))$ for some $\delta_0 >0$.
	For $t\in I$ we denote by $A(t)$ the part of $\A_1(t)$ in $H$ if $\beta = 0$ and the part of $\A(t)$ in $H$ if $\beta < 1/2$.
	Then for every $f \in L^2(I;H_{2\alpha-1})$ and $u_0 \in H_{2\alpha}$ for $\alpha <1/2$ and 
	$u_0 \in D(A(0)^{1/2})$ for $\alpha =  1 /2$,
	 there exists a unique $u \in \textit{MR}_\alpha(I;\A)$ with
	\begin{equation}\label{eq:alpha_sol}
		u' + \A u = f, \quad u(0) = u_0.
	\end{equation}
	Moreover, $\textit{MR}_\alpha(I;\A) \hookrightarrow H^{\alpha}(I;V)$, $\textit{MR}_\alpha(I;\A) \hookrightarrow C(I;H_{2\alpha})$ if $\alpha < \frac 1 2$ and
	$u(t) \in D(A(t)^{1/2})$ for every $u\in \textit{MR}_{1/ 2}(I;\A)$ and every $t \in I$.
\end{theorem}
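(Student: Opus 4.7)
The plan is to reduce the initial-value problem on $I$ to the whole-line result of Theorem~\ref{thm:MRinRforforms}, in the spirit of the proof of Theorem~\ref{thm:weakMRinRforformsonI}. First I would absorb the constant $\omega$ by the substitution $u \mapsto e^{-\omega \cdot}u$, which preserves the $\mathring W^{\alpha+\delta_0,1/\alpha}$-regularity of the coefficients, and thereby reduce to the case that $\fra_1$ is coercive. Next I would extend the two forms from $I$ to $\R$ by freezing them at the endpoints: $\tilde\A_i(t):=\A_i(0)$ for $t<0$ and $\tilde\A_i(t):=\A_i(T)$ for $t>T$. Pointwise in time this preserves the boundedness and coercivity constants, while finiteness of the Gagliardo seminorm on $\R$ reduces, up to the seminorm on $I$, to the one-sided Hardy-type quantity $\int_0^T \norm{\A_i(t)-\A_i(0)}^{1/\alpha}t^{-1-\delta_0/\alpha}\,\mathrm{d}{t}$ (and its counterpart at $T$), which is finite because $\A_i$ admits endpoint traces by the Sobolev embedding $\mathring W^{\alpha+\delta_0,1/\alpha}(I;\cdot)\hookrightarrow C(\overline I;\cdot)$.

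For $u_0=0$ I would extend $f$ by zero to $\R$ and apply Theorem~\ref{thm:MRinRforforms} to obtain $u\in\textit{MR}_\alpha(\tilde\A)$. Since $\tilde\A$ is pointwise in time it commutes with $\1_{(-\infty,t)}$, so Proposition~\ref{prop:evolution} forces $u\equiv 0$ on $(-\infty,0]$; the restriction $u|_I$ then lies in $\textit{MR}_\alpha(I;\A)$ and solves \eqref{eq:alpha_sol}, with uniqueness also coming from Proposition~\ref{prop:evolution}. For $u_0\neq 0$ I would subtract a lifting: construct $w_+ \in H^1(\R_+;H_{2\alpha-1})\cap L^2(\R_+;V)$ with $\tilde\A w_+ \in L^2(\R_+;H_{2\alpha-1})$ and $w_+(0)=u_0$. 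For $\alpha<1/2$ the space $H_{2\alpha}$ is, by Proposition~\ref{prop:complexinterpolation}, the real-interpolation trace space $(H_{2\alpha-1},H_{2\alpha+1})_{1/2,2}$; an explicit lifting is $w_+(t):=e^{-tB}u_0$, with $B$ the self-adjoint operator on $H$ associated with $(\cdot\mid\cdot)_V$, whose membership in $H^1(\R_+;H_{2\alpha-1})\cap L^2(\R_+;H_{2\alpha+1})$ follows from spectral calculus, and Proposition~\ref{prop:trace} then controls $\tilde\A w_+$ in $L^2(\R_+;H_{2\alpha-1})$. For $\alpha=1/2$ I would instead take $w_+(t):=e^{-tA(0)}u_0$, whose membership in $H^1(\R_+;H)\cap L^2(\R_+;D(A(0)))$ for $u_0\in D(A(0)^{1/2})$ is the classical autonomous $H$ maximal regularity for $A(0)$. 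Reflecting $w(t):=w_+(-t)$ onto $(-\infty,0]$ and setting $\tilde f := f\1_I + (w'+\tilde\A w)\1_{(-\infty,0)}$, I would then apply Theorem~\ref{thm:MRinRforforms} and, by the same causality argument, identify the resulting solution with $w$ on $(-\infty,0]$, so that its restriction to $I$ is the required solution with $u(0)=u_0$.

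The embedding $\textit{MR}_\alpha(I;\A)\hookrightarrow H^\alpha(I;V)$ descends from the $\R$-embedding $\textit{MR}_\alpha(\tilde\A)\hookrightarrow\V_\alpha$ established in Theorem~\ref{thm:MRinRforforms}, while $\textit{MR}_\alpha(I;\A)\hookrightarrow C(I;H_{2\alpha})$ for $\alpha<1/2$ is the classical trace theorem applied to $u\in H^1(I;H_{2\alpha-1})\cap L^2(I;H_{2\alpha+1})$, the latter inclusion being obtained from $\A u\in L^2(I;H_{2\alpha-1})$ via Proposition~\ref{prop:trace}. For $\alpha=1/2$ the pointwise assertion $u(t)\in D(A(t)^{1/2})$ holds for almost every $t$ directly from $u(t)\in V$ and $\A(t)u(t)\in H$, and then for every $t$ by the continuity of $u$ in a suitable intermediate space. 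The main technical obstacles I expect are (i) verifying that the endpoint-frozen extension of $\A_i$ actually belongs to $\mathring W^{\alpha+\delta_0,1/\alpha}(\R;\cdot)$, which requires the Hardy-type inequality indicated above, and (ii) constructing the initial lifting in the endpoint case $\alpha=1/2$, where the trace space $D(A(0)^{1/2})$ falls outside the scope of Proposition~\ref{prop:trace} and must be handled via the autonomous Kato/maximal regularity result for $A(0)$.
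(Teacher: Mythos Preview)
Your plan follows the paper's proof almost exactly: rescaling to kill $\omega$, extending the forms to $\R$ by freezing at the endpoints (this is precisely Proposition~\ref{prop:const_ext}, which already contains the Hardy-type estimate you anticipate), applying Theorem~\ref{thm:MRinRforforms}, and using causality via Proposition~\ref{prop:evolution}. Two points, however, need attention.

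First, after rescaling you only arrange that $\fra_1$ is coercive, but Theorem~\ref{thm:MRinRforforms} also requires condition~\eqref{eq:asA_22}, namely $\operatorname{Re}\fra_2(t,v,v)\ge -\eta_2\norm{v}_V^2$ with $\eta_2<\eta$, and this is \emph{not} among the hypotheses in Section~6. The paper handles this by a further shift $\A_2\mapsto\A_2+\lambda$ (equivalently, taking a larger $\omega$) and then uses \eqref{eq:IboundA2} together with the interpolation inequality $\norm{v}_{H_{1+2\beta-2\alpha}}\le \norm{v}_V^{1+2\beta-2\alpha}\norm{v}_H^{2\alpha-2\beta}$ and Young's inequality to produce such an $\eta_2$. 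You should make this step explicit.

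Second, your trace arguments are not quite right. For $\alpha<\tfrac12$ you invoke $u\in L^2(I;H_{2\alpha+1})$, but $H_\gamma$ is only defined for $\gamma\in[-1,1]$ and, more importantly, the domain of $\A(t)$ in $H_{2\alpha-1}$ is time dependent, so the classical autonomous trace theorem does not apply directly; the paper appeals to \cite{Zac05} here. For $\alpha=\tfrac12$, your argument ``$u(t)\in V$ and $\A(t)u(t)\in H$ for a.e.\ $t$, then pass to every $t$ by continuity'' does not work: the spaces $D(A(t)^{1/2})$ vary with $t$, and continuity of $u$ in a fixed space cannot place $u(t_0)$ in the $t_0$-dependent set $D(A(t_0)^{1/2})$. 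The paper's device is cleaner: since the extended operator is frozen at $\A(T)$ on $[T,\infty)$, the restriction $u|_{[T,\infty)}$ lies in the \emph{autonomous} space $\textit{MR}_{1/2}([T,\infty);\A(T))$, whence $u(T)\in D(A(T)^{1/2})$ by the real-interpolation trace method; repeating on $[0,t_0]$ gives every $t_0\in I$.
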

\begin{proof}
	It is our goal to apply Theorem~\ref{thm:MRinRforforms}.
	By the same rescaling argument as in the proof of Theorem~\ref{thm:weakMRinRforformsonI} we may assume that $\fra_1$ is coercive, i.e.\ $\omega=0$.
	We even may replace $\A_2$ by $\A_2+\lambda$ for some $\lambda \ge 0$.
	This will be necessary to obtain \eqref{eq:asA_22} for some $\eta_2 < \eta$.
	
	Since $\A_1(\cdot)$ belongs to the space $\mathring W^{\alpha+\delta_0,\frac 1 \alpha}(I; \mathcal{L}(V,V'))$ we may extend $\A_1(\cdot)$ to 
	$\mathring W^{\alpha+\delta_0,\frac 1 \alpha}(\R; \mathcal{L}(V,V'))$ by applying the operators $\mathcal E_0^l$ and $\mathcal E_T^r$ defined in Proposition~\ref{prop:const_ext}.
	Moreover, we extend $\A_2(\cdot)$ to $\mathring W^{\beta+\delta_0,\frac 1 \beta}(\R; \mathcal{L}(V, H_{2\alpha-2\beta-1}))$ in the same way if $\beta >0$ and by $0$ if $\beta=0$.
	To apply Theorem~\ref{thm:MRinRforforms} it remains to show that \eqref{eq:asA_22} holds for some $\eta_2 < \eta$ if we choose $\lambda$ sufficiently large.
	Indeed by \eqref{eq:IboundA2} and Young's inequality we obtain for sufficiently large $\lambda$ that
	\begin{multline*}
		\operatorname{Re} \langle \A_2(t) v,v \rangle + \lambda \norm{v}_H^2 \ge - \abs{ \langle \A_2(t) v,v \rangle} + \lambda \norm{v}_H^2
			\ge-  M_2 \norm{v}_V \norm{v}_{H_{1+2\beta-2\alpha}}\\ + \lambda \norm{v}_H^2
				\ge-  M_2 \norm{v}_V \norm{v}_H^{2\alpha-2\beta}\norm{v}^{1+2\beta-2\alpha}_V + \lambda \norm{v}_H^2
					 \ge -\frac \eta 2\norm{v}_V^2
	\end{multline*}
	for all $v \in V$ and all $t \in \R$.
	
	We proceed as in the proof of Theorem~\ref{thm:weakMRinRforformsonI}.
	In the case $u_0=0$ we extend $f$ by $0$ on the complement of $I$, 
	then the restriction to $I$ of the solution $u$ given by Theorem~\ref{thm:MRinRforforms} satisfies \eqref{eq:weakIVP} and is unique by
	Proposition~\ref{prop:evolution}.
	
	Next we consider the case $u_0 \neq 0$.
	By the trace method for real interpolation (see \cite[Corollary~1.14]{Lun09}) there exists some 
	\(
		v \in \begin{cases} \textit{MR}_\alpha([0,\infty);\A(0))  \!\! &: \beta >0\\ \textit{MR}_\alpha([0,\infty);\A_1(0)) \!\! &: \beta =0\end{cases}
	\)
	with $v(0) =u_0$,
	where we use Proposition~\ref{prop:trace} in the case $\alpha < 1/2$. Note that in the case $\beta = 0$ 
	we have $\textit{MR}_\alpha(I;\A) = \textit{MR}_\alpha(I;\A_1)$ with equivalent Norms, since $\A_2 \in \mathcal{L}(L^2(I;V); L^2(I;H))$.
	We set $w(t):=v(-t) \in 
	\begin{cases} \textit{MR}_\alpha((-\infty, 0];\A(0))   &: \beta >0\\ \textit{MR}_\alpha((-\infty, 0];\A_1(0))  &: \beta =0\end{cases}$ 
	and we extend $f$ to $\R$ by $w'(t)+ \A w(t)$ on $(-\infty, 0)$ and by $0$ on $(T,\infty)$.
	Finally, the restriction to $I$ of the solution $u$ given by Theorem~\ref{thm:MRinRforforms} satisfies \eqref{eq:alpha_sol} and is unique by
	Proposition~\ref{prop:evolution}.
	
	Moreover, $\textit{MR}_\alpha(I;\A) \hookrightarrow C(I;H_{2\alpha})$ if $\alpha < \frac 1 2$ (see \cite[Theorem 3.6.]{Zac05})
	and in the case $\alpha = \frac 1 2$ we have $u\vert_{[T,\infty)} \in \begin{cases} \textit{MR}_\alpha([T, \infty);\A(T))   &: \beta >0\\ \textit{MR}_\alpha([T, \infty);\A_1(T))  &: \beta =0\end{cases}$, 
	hence $u(T) \in D(A(T)^{1/2})$ again by \cite[Corollary~1.14]{Lun09}.
\end{proof}

\section{Applications}
This section is devoted to some applications of the results given in the previous
sections. We give examples illustrating the theory without seeking for generality.
Throughout this section we consider the field $\K=\R$.
\subsection*{Elliptic operators with time dependent $L^\infty$ coefficients}
Let $\Omega \subset \R^d$ be a bounded domain, where $d \in \N$.
Suppose $a_{jk} \in L^\infty(I \times \Omega)$, $j,k\in \{1,\dots,d\}$ satisfy  
\begin{equation*}
	\sum_{j,k=1}^d a_{jk}(t,x) \xi_j \xi_k \ge \eta \abs{\xi}^2 \quad (t \in I,\, x \in \Omega,\, \xi \in \R^d).
\end{equation*}
for some  $\eta>0$.
By $H^1_0(\Omega)$ we mean the closure of the test functions $\D(\Omega)$ in $H^1(\Omega)$.
We denote for $\beta \in [0,1)$ by  $H^\beta_0(\Omega)$ the space $[L^2(\Omega), H_0^{1}(\Omega)]_\beta$ and by $H^{-\beta}(\Omega)$ the space $[L^2(\Omega), H^{-1}(\Omega)]_\beta$, where $H^{-1}(\Omega) := (H^1_0(\Omega))'$. Note that $H^{-\beta}(\Omega) = (H^\beta_0(\Omega))'$ (see \cite[p. 72]{Tri95}).

\begin{corollary}
	Let $I=[0,T]$, $\alpha \in [0,1/2]$ and if $\alpha>0$ suppose that in addition $a_{jk} \in \mathring W^{\alpha+\delta,\frac1{\alpha}}(I; L^\infty(\Omega))$ for some $\delta>0$.
	If $\alpha= 1/2$ we also assume that $\Omega$ has Lipschitz boundary.
	Then for every $f \in L^2(I ;H^{2\alpha -1}(\Omega))$, $u_0 \in H_0^{2\alpha}(\Omega)$ there exists a unique $u \in H^\alpha(I;H^1_0(\Omega)) \cap H^1(I;H^{2\alpha-1}(\Omega))$ such that
	\begin{equation*}
          	u'  -  \operatorname{div} ((a_{jk}) \nabla u ) =  f, \quad u(0) =u_0.
	\end{equation*}
\end{corollary}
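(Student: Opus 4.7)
The plan is to realize this corollary as an instance of Theorem~\ref{thm:MRinRforformsonI}. I would take the Gelfand triple $V := H^1_0(\Omega)$, $H := L^2(\Omega)$, trivial perturbation $\fra_2 = 0$ (so that $\beta = 0$), and
\[
	\fra_1(t, v, w) := \int_\Omega \sum_{j,k=1}^d a_{jk}(t, x)\, \partial_k v(x)\, \partial_j w(x)\, \mathrm{d}{x}.
\]
Boundedness of $\fra_1$ on $V \times V$ is immediate from $a_{jk} \in L^\infty(I \times \Omega)$, and pointwise ellipticity together with Poincaré's inequality on the bounded domain $\Omega$ yields coercivity, so \eqref{eq:asA_21I} holds with $\omega = 0$. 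By Proposition~\ref{prop:complexinterpolation} the associated interpolation scale is $H_{2\alpha} = [L^2(\Omega), H^1_0(\Omega)]_{2\alpha} = H^{2\alpha}_0(\Omega)$ and $H_{2\alpha-1} = [L^2(\Omega), H^{-1}(\Omega)]_{1-2\alpha} = H^{2\alpha-1}(\Omega)$, matching the spaces in the statement; the embedding $\textit{MR}_\alpha(I;\A) \hookrightarrow H^\alpha(I;V)$ from Theorem~\ref{thm:MRinRforformsonI} then supplies the asserted solution regularity.

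The case $\alpha = 0$ is already covered by Theorem~\ref{thm:weakMRinRforformsonI}, so I would focus on $\alpha \in (0, 1/2]$. The main routine step is to transfer the time regularity of the coefficients to the operator-valued map $\A_1(\cdot)$. Directly from the definition of $\fra_1$ one has the pointwise bound
\[
	\norm{\A_1(t) - \A_1(s)}_{\mathcal{L}(V,V')} \le C\,\max_{j,k} \norm{a_{jk}(t,\cdot) - a_{jk}(s,\cdot)}_{L^\infty(\Omega)},
\]
so that raising to the power $1/\alpha$ and integrating against the Slobodetskii kernel $\abs{t-s}^{-(1+(\alpha+\delta)/\alpha)}$ gives $\A_1(\cdot) \in \mathring W^{\alpha+\delta, 1/\alpha}(I; \mathcal{L}(V,V'))$, precisely the hypothesis required by Theorem~\ref{thm:MRinRforformsonI}. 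For $\alpha \in (0, 1/2)$ the initial data assumption $u_0 \in H^{2\alpha}_0(\Omega) = H_{2\alpha}$ is exactly the one demanded by that theorem, and the conclusion is immediate.

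The main obstacle is the endpoint $\alpha = 1/2$: Theorem~\ref{thm:MRinRforformsonI} requires $u_0 \in D(A(0)^{1/2})$, whereas the corollary assumes only $u_0 \in H^1_0(\Omega) = V$. Closing this gap amounts to establishing the Kato square root identity $D(A(0)^{1/2}) = V$ with equivalent norms for the non-self-adjoint elliptic operator $A(0) = -\operatorname{div}\bigl((a_{jk}(0,\cdot))\nabla \,\cdot\,\bigr)$ with Dirichlet boundary conditions and merely bounded measurable, uniformly elliptic coefficients on $\Omega$. This is exactly why the Lipschitz boundary hypothesis is imposed at $\alpha = 1/2$: the resolution of the Kato square root conjecture (Auscher--Hofmann--Lacey--McIntosh--Tchamitchian) and its extensions to bounded Lipschitz domains with Dirichlet boundary conditions furnish the needed identification. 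Once this is invoked, Theorem~\ref{thm:MRinRforformsonI} applies verbatim and the corollary follows.
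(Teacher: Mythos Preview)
Your proposal is correct and follows essentially the same approach as the paper: define the form $\fra_1$ on $V=H^1_0(\Omega)$, $H=L^2(\Omega)$ with $\fra_2=0$, verify the hypotheses of Theorem~\ref{thm:MRinRforformsonI}, and for the endpoint $\alpha=1/2$ invoke the Kato square root identity $D(A(0)^{1/2})=H^1_0(\Omega)$ on Lipschitz domains (the paper cites \cite{AT03} for this). The paper's proof is simply a terser version of what you wrote.
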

\begin{proof}
	We define the non-autonomous form $\fra_1 \colon I \times H^1_0(\Omega) \times H^1_0(\Omega)\to \R$ by 
	\begin{equation*}
		\fra_1(t,v,w) = \int_\Omega \sum_{j,k=1}^d  a_{jk}(t,x) \partial_j v (t,x) \,{\partial_k w (t,x)} \ \mathrm{d}{x}.
	\end{equation*}
	Then $\fra_1$ satisfies the conditions of Theorem~\ref{thm:MRinRforformsonI}.
	Note for the case $\alpha = 1/2$ that $D(A(0)^{1/2})$ from Theorem~\ref{thm:MRinRforformsonI} coincides with the space $H^1_0(\Omega)$ 
	(see \cite{AT03}).
\end{proof}
Note that the domain of the part of $\A_1(t)$ in $H_0^{2\alpha -1}(\Omega)$ is time dependent for $\alpha >0$, where $\A_1 \sim \fra_1$.

\subsection*{Elliptic operators on $\R^d$ with mixed regularity}
Let $I=[0,T]$ with $T>0$, $d \in \N$, $0<\alpha_0<\alpha<\frac 1 2$ and $0<\beta_0<\beta< 1$ such that $2\alpha_0 + \beta_0 = 1$.
Moreover, let $a_{jk} \in L^\infty(I \times\R^d)$, $j,k\in \{1,\dots,d\}$ such that there exits some $\eta>0$ with
\begin{equation*}
	\sum_{j,k=1}^d a_{jk}(t,x) \xi_j \xi_k \ge \eta \abs{\xi}^2 \quad (t \in I,\, x \in \R^d,\, \xi \in \R^d).
\end{equation*}
\begin{corollary}
	Suppose in addition that $a_{jk} \in \mathring W^{\alpha,\frac1{\alpha_0}}(I; \mathring W^{\beta,\frac d{\beta_0}}(\R^d))$ for all $j,k \in \{1, \dots, d\}$.
	Then for every $f \in L^2(I; L^2(\R^d))$ and every $u_0 \in H^1(\R^d)$ there exists a unique $u \in H^1(I;L^2(\R^d)) \cap H^{1/2}(I; H^1(\R^d))$ such that
	\[
		u'- \operatorname{div}((a_{jk}) \nabla u) = f, \quad u(0)=u_0.
	\]
\end{corollary}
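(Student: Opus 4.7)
The plan is to apply Theorem~\ref{thm:MRinRforformsonI} with $V=H^1(\R^d)$, $H=L^2(\R^d)$, $\alpha=\tfrac12$ and $\beta:=\alpha_0\in(0,\tfrac12)$. Under these choices $H_{2\alpha-1}=L^2(\R^d)$, $H_{1+2\beta-2\alpha}=H^{2\alpha_0}(\R^d)$, $H_{2\alpha-2\beta-1}=H^{-2\alpha_0}(\R^d)$, and $D(A(0)^{1/2})=H^1(\R^d)$ by Kato's theorem for divergence-form elliptic operators with $L^\infty$ coefficients on $\R^d$ (\cite{AT03}). Thus the conclusion $u\in\textit{MR}_{1/2}(I;\A)\hookrightarrow H^{1/2}(I;H^1(\R^d))\cap H^1(I;L^2(\R^d))$ from the theorem is exactly the claimed regularity of the solution.

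First I would define $\fra(t,v,w)=\int_{\R^d}\sum_{j,k} a_{jk}(t,x)\,\partial_j v\,\partial_k w\,\mathrm{d}x$ on $I\times V\times V$. Boundedness follows from $W^{\beta,d/\beta_0}(\R^d)\hookrightarrow L^\infty(\R^d)$ (since $\beta>\beta_0$), and uniform ellipticity yields quasi-coercivity. To match the splitting hypothesis of the theorem, set $\fra_1(v,w)=\mu\int_{\R^d}\nabla v\cdot\nabla w\,\mathrm{d}x$ for some fixed $\mu\in(0,\eta)$; this is autonomous, quasi-coercive with constant $\mu$ and shift $\omega=\mu$, and trivially satisfies $\A_1\in\mathring W^{1/2+\delta_0,2}(I;\mathcal{L}(V,V'))$ for any $\delta_0>0$. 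The residual $\fra_2(t,v,w)=\int_{\R^d}(a_{jk}(t,x)-\mu\delta_{jk})\,\partial_j v\,\partial_k w\,\mathrm{d}x$ carries all of the $(t,x)$-dependence; note that only Sobolev \emph{seminorms} enter, so the constant shift $\mu\delta_{jk}$ does not alter the relevant spatial norms of the coefficients.

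The main task is to verify $\A_2(\cdot)\in\mathring W^{\beta+\delta_0,1/\beta}(I;\mathcal{L}(V,H^{-2\beta}))$. The key pointwise-in-$t$ estimate is a Sobolev multiplier / Kato--Ponce type inequality
\[
|\fra_2(t,v,w)|\leq C\,\|a_{jk}(t,\cdot)\|_{\mathring W^{\beta,d/\beta_0}(\R^d)}\,\|v\|_{H^1(\R^d)}\,\|w\|_{H^{2\beta}(\R^d)},
\]
and the analogous bound for the differences $\fra_2(t,\cdot,\cdot)-\fra_2(s,\cdot,\cdot)$. Here the scaling relation $2\alpha_0+\beta_0=1$ (equivalently $1-2\beta=\beta_0$) is precisely what makes the exponents fit: the product $a_{jk}(t,\cdot)\,\partial_j v$ lands in $H^{1-2\beta}(\R^d)=H^{\beta_0}(\R^d)$ by fractional Leibniz, and this is dual to $\partial_k w\in H^{2\beta-1}(\R^d)$. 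Once this operator bound is established, a Sobolev embedding in time $\mathring W^{\alpha,1/\alpha_0}(I)\hookrightarrow \mathring W^{\beta+\delta_0,1/\beta}(I)$---which with $\beta=\alpha_0$ reduces to the trivial embedding of same integrability and strictly smaller smoothness, valid for any $\delta_0\in(0,\alpha-\alpha_0)$---transfers the hypothesis $a_{jk}\in\mathring W^{\alpha,1/\alpha_0}(I;\mathring W^{\beta,d/\beta_0}(\R^d))$ into the required mixed regularity of $\A_2$.

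The main obstacle is the Sobolev multiplier estimate: the exponents must be tuned exactly to the parabolic-critical scaling $2\alpha_0+\beta_0=1$ so that the spatial Sobolev regularity $\mathring W^{\beta,d/\beta_0}$ of $a_{jk}$ compensates for the stronger target space $H^{-2\beta}\subsetneq V'$. With this estimate in hand, all hypotheses of Theorem~\ref{thm:MRinRforformsonI} are verified and the corollary follows directly.
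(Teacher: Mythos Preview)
Your proposed splitting cannot satisfy the hypotheses of Theorem~\ref{thm:MRinRforformsonI}. The crucial bound \eqref{eq:IboundA2}, which in your setting reads
\[
\lvert\fra_2(t,v,w)\rvert \le M_2\,\norm{v}_{H^1(\R^d)}\,\norm{w}_{H^{2\alpha_0}(\R^d)},
\]
is \emph{false}. With $\mu\in(0,\eta)$ the matrix $(a_{jk}-\mu\delta_{jk})$ is still uniformly elliptic, so $\fra_2(t,v,v)\ge(\eta-\mu)\norm{\nabla v}_{L^2}^2$; the bound above would then force $\norm{\nabla v}_{L^2}^2\lesssim\norm{v}_{H^1}\norm{v}_{H^{2\alpha_0}}$, which fails by scaling since $2\alpha_0<1$. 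Equivalently, the ``Kato--Ponce'' step you invoke asks that multiplication by $a_{jk}(t,\cdot)$ send $\partial_j v\in L^2(\R^d)$ into $H^{\beta_0}(\R^d)$: multiplication can never raise Sobolev regularity, regardless of how smooth the multiplier is. No splitting with $V=H^1$, $H=L^2$ and $\alpha=\tfrac12$ works here, because the full form only has time regularity $\mathring W^{\alpha,1/\alpha_0}$ with $\alpha<\tfrac12$, which does not embed into $\mathring W^{1/2+\delta_0,2}$, while any genuine second-order piece placed in $\fra_2$ violates \eqref{eq:IboundA2} as above.

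The paper circumvents this by \emph{shifting the Gelfand triple}: it takes $V=H^{1+\beta_0}(\R^d)$, $H=H^{\beta_0}(\R^d)$ (so that $H_{2\alpha_0-1}=L^2(\R^d)$), and replaces the $L^2$ pairing by the $H^{\beta_0}$ form $\frb$, defining $\fra(t,v,w)=\sum_{j,k}\frb(a_{jk}(t)\partial_k v,\partial_j w)$. Now the spatial regularity $a_{jk}\in\mathring W^{\beta,d/\beta_0}(\R^d)$ is exactly what makes this form bounded and quasi-coercive on $H^{1+\beta_0}\times H^{1+\beta_0}$, and the time regularity gives $\A(\cdot)\in\mathring W^{\alpha_0+\delta_0,1/\alpha_0}(I;\mathcal L(V,V'))$. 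Theorem~\ref{thm:MRinRforformsonI} is then applied with the \emph{smaller} exponent $\alpha_0$ and no perturbation $\fra_2$ at all, yielding $u\in H^1(I;L^2)\cap H^{\alpha_0}(I;H^{1+\beta_0})$; the claimed space $H^{1/2}(I;H^1)$ follows by interpolation, using $2\alpha_0+\beta_0=1$.
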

\begin{proof}
	We set $V:= H^{1+\beta_0}(\R^d)$, $H:= H^{\beta_0}(\R^d)$ and we identify $H$ with $H'$, then $V' = H^{\beta_0-1}(\R^d)$.
	Hence, $H_{2\alpha_0-1}=H_{-\beta_0} = [H, V']_{\beta_0} = L^2(\R^d)$.
	It is our goal to apply Theorem~\ref{thm:MRinRforformsonI} in this setting.
	We define $\frb \colon H^{\beta_0}(\R^d) \times H^{\beta_0}(\R^d) \to \R$ by
	\[
		\frb(v,w) = \int_{\R^d} \int_{\R^d} \frac{(v(x)-v(y)) {(w(x)-w(y))}}{\abs{x-y}^{2\beta_0}}\ \frac{\mathrm{d} x\ \mathrm{d} y}{\abs{x-y}^d} + (v \, \vert \, w)_{L^2(\R^d)}.
	\]
	Then $\frb$ is a scalar product on $H^{\beta_0}(\R^d)$. Let $\B \in \mathcal{L}( H^{\beta_0}(\R^d),( H^{\beta_0}(\R^d))')$ be its associated operator and $B \in \mathcal{L}(H^{2\beta_0}(\R^d), L^2(\R^d))$ the part of $\B$ in $L^2(\R^d)$.
	
	Moreover, we define the non-autonomous form
	\[
		\fra \colon I \times H^{1+\beta_0}(\R^d) \times H^{1+\beta_0}(\R^d) \to \R, \quad \fra(t,v,w) = \sum_{j,k=1}^d \frb \big(a_{jk}(t) \partial_k v, \partial_j w \big)
	\]
	and we denote its associated operator by $\A$.
	By our assumptions on the coefficients $a_{jk}$ we obtain that $\fra$ is bounded, quasi-coercive and that
	\[
		\A(\cdot) \in \mathring W^{\alpha,\frac1{\alpha_0}}(I; \mathcal{L}(H^{1+\beta_0}(\R^d), (H^{1+\beta_0}(\R^d))')).
	\]
	We only show that $\fra$ is quasi-coercive, the other properties are easy to check.
	Let $v \in H^{1+\beta_0}(\R^d)$, then
	\begin{align*}
		\fra(t,v,v)
		&=\sum_{j,k=1}^d \biggl[ \int_{\R^d} \int_{\R^d} a_{jk}(t,x)\frac{(\partial_k v(x)- \partial_kv(y)) {(\partial_j v(x)- \partial_j v(y))}}{\abs{x-y}^{2\beta_0}}\ \frac{\mathrm{d} x\ \mathrm{d} y}{\abs{x-y}^d}\\ 
		&\quad+\int_{\R^d} \int_{\R^d} \frac{(a_{jk}(t,x)- a_{jk}(t,y)) \partial_k v(y) {(\partial_j v(x)- \partial_j v(y))}}{\abs{x-y}^{2\beta_0}}\ \frac{\mathrm{d} x\ \mathrm{d} y}{\abs{x-y}^d} \\
		&\quad+\int_{\R^d} a_{jk}(t,x) \partial_k v(x) {\partial_j v(x)} \ \mathrm{d}{x} \biggr]\\
		&\ge   \sum_{k=1}^d \frac\eta2 \frb(\partial_k v, \partial_k v) \\
		&\quad- \sum_{j,k=1}^d \frac 1 {2\eta} \int_{\R^d} \int_{\R^d} \frac{\abs{a_{jk}(t,x)- a_{jk}(t,y)}^2 \abs{\partial_k v(y)}^2}{\abs{x-y}^{2\beta_0}}\ \frac{\mathrm{d} x\ \mathrm{d} y}{\abs{x-y}^d},
	\end{align*}
	where we used the ellipticity of $(a_{jk})$, Hölder's inequality and Young's inequality ($a\cdot b \le \frac\eta 2 a^2+\frac 1 {2\eta}b^2,\ a,b\in\R$). 
	It remains to estimate the last term by $\frac \eta 4 \sum_{j,k=1}^d \frb(\partial_k v, \partial_k v)+ C \norm{v}_{H^{\beta_0}}^2$ for some $C\ge0$.
	This can be done as in the proof of Lemma~\ref{lem:comest} using the regularity of $(a_{jk})$. Thus $\fra$ is quasi-coercive.
	
	Let $f \in L^2(I; L^2(\R^d))$ and $u_0 \in H^1(\R^d)$.
	By Theorem~\ref{thm:MRinRforformsonI} there exists a unique $u \in \textit{MR}_{\alpha_0}(\A)$ such that
	\(
		u'+\A u =f
	\) in $L^2(I;(H^{1+\beta_0}(\R^d))')$ and $u(0)=u_0$.
	Since the identity 
	\begin{equation*}
		\langle v, w \rangle_{(H^{1+\beta_0}(\R^d))', H^{1+\beta_0}(\R^d)}  = (v \, \vert \, B w)_{L^2(\R^d)} \quad (v \in L^2(\R^d),\, w \in H^{2\beta_0}(\R^d))
	\end{equation*}
	holds and since $\partial_j B v = B \partial_j v$ for every $v \in H^{1+2\beta_0}(\R^d)$, $j\in\{1,\dots, d\}$
	we see that the function $u$ is the desired solution.
\end{proof}


\subsection*{Parabolic systems}
Let $I=[0,T]$ with $T>0$, $d, n \in \N$.
Let $a_{jk}^{lm} \in L^\infty(I; BUC(\R^d))$ for $j,k \in \{1, \dots, d\}$, $l, m \in \{1, \dots, n\}$ and suppose that there exists $\eta >0$ such that
\begin{equation*}
	\sum_{j,k=1}^d \sum_{l,m=1}^n a_{jk}^{lm}(t,x) \zeta_l \zeta_m \xi_j \xi_k \ge \eta \abs{\zeta}^2 \abs{\xi}^2 \quad (t\in I,\ x \in \R^d,\ \zeta \in \R^n,\ \xi \in\R^d).
\end{equation*}
Note that this condition is called the Legendre-Hadamard ellipticity condition.

\begin{corollary}
	Let $\alpha \in [0,1/2]$ and if $\alpha>0$ suppose in addition that $a_{jk} \in \mathring W^{\alpha+\delta,\frac1{\alpha}}(I; L^\infty(\R^d))$ for some $\delta>0$.
	Then for every $f \in L^2(0,T;H^{2\alpha -1}(\R^d))^n$, $u_0 \in H^{2\alpha}(\R^d)^n$ there exists a unique $u \in H^\alpha(I;H^1(\R^d))^n \cap H^1(I;H^{2\alpha-1}(\R^d))^n$ such that
	\begin{equation*}
		u_l'  -  \sum_{m=1}^n \operatorname{div} \big((a_{jk}^{lm})_{j,k \in \{1,\dots,d\}} \nabla u_m \big) =  f_l \quad (l \in \{1,\dots, n\}),\quad
                       u(0) =u_0.
	\end{equation*}
\end{corollary}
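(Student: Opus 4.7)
The plan is to apply Theorem~\ref{thm:MRinRforformsonI} with $\beta=0$, the Hilbert spaces $V=H^1(\R^d)^n$ and $H=L^2(\R^d)^n$, the identification $V'=H^{-1}(\R^d)^n$, and the interpolation spaces $H_{2\alpha}=H^{2\alpha}(\R^d)^n$ and $H_{2\alpha-1}=H^{2\alpha-1}(\R^d)^n$. We consider the non-autonomous form
\[
	\fra(t,u,v) := \sum_{j,k=1}^d \sum_{l,m=1}^n \int_{\R^d} a_{jk}^{lm}(t,x)\, \partial_k u_m(x)\, \partial_j v_l(x)\ \mathrm{d}{x}
\]
on $I\times V\times V$, set $\A_1\sim\fra$ and $\A_2=0$ (so the assumption on $\A_2$ is vacuous), and we define $u_0$, $f$ as in the statement.

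Boundedness of $\fra$ with constant $M\le d^2 n^2 \sup_{j,k,l,m}\norm{a_{jk}^{lm}}_{L^\infty(I\times\R^d)}$ is immediate. The main analytic input is quasi-coercivity of $\fra$, i.e., the existence of $\eta'>0$ and $\omega\ge 0$ with $\operatorname{Re}\fra(t,u,u)+\omega\norm{u}_H^2\ge \eta'\norm{u}_V^2$ for all $t\in I$ and $u\in V$. Here pointwise ellipticity fails in general, but because $a_{jk}^{lm}(t,\cdot)\in BUC(\R^d)$ uniformly in $t$, this is precisely Gårding's inequality for systems under the Legendre--Hadamard condition. The standard proof proceeds by a freezing-coefficients argument: one covers $\R^d$ by balls of a small radius on which the oscillation of $a_{jk}^{lm}(t,\cdot)$ is less than a prescribed $\varepsilon$ (uniform in $t$ by $BUC$), uses a subordinated partition of unity, applies Plancherel to each constant-coefficient piece (where the Legendre--Hadamard condition yields a lower bound on the Fourier symbol), and absorbs the commutator terms with the partition into an $L^2$-correction. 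This is the main obstacle in the proof and is the only place where the $BUC$ assumption enters.

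The time regularity hypothesis $a_{jk}^{lm}\in\mathring W^{\alpha+\delta,1/\alpha}(I;L^\infty(\R^d))$ transfers directly to $\A_1(\cdot)\in \mathring W^{\alpha+\delta,1/\alpha}(I;\mathcal L(V,V'))$ because the assignment of coefficients to the associated operator is bounded linear from $L^\infty(\R^d)$ into $\mathcal L(V,V')$ by Cauchy--Schwarz. For $\alpha<1/2$, Proposition~\ref{prop:trace} identifies the trace space with $H_{2\alpha}=H^{2\alpha}(\R^d)^n$, matching the stated assumption on $u_0$. For $\alpha=1/2$, we need $u_0\in D(A(0)^{1/2})$ and the conclusion $D(A(0)^{1/2})=H^1(\R^d)^n$; this is the Kato square-root identity for elliptic systems in divergence form under Legendre--Hadamard ellipticity on $\R^d$, which holds in this setting (cf.\ the proof scheme used in the first corollary of this section). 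Applying Theorem~\ref{thm:MRinRforformsonI} then yields a unique $u\in\textit{MR}_\alpha(I;\A_1)$ with $u'+\A_1 u=f$ and $u(0)=u_0$, and the embeddings $\textit{MR}_\alpha(I;\A_1)\hookrightarrow H^\alpha(I;V)\cap H^1(I;H^{2\alpha-1}(\R^d)^n)$ give exactly the regularity claimed. Equivalence of the operator equation with the system of PDEs is verified componentwise by testing $\fra(t,u,v)$ against $v=\varphi\, e_l$ with $\varphi\in\D(\R^d)$.
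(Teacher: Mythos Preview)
Your proposal is correct and follows essentially the same route as the paper: define the form $\fra$ on $H^1(\R^d)^n$, check boundedness, obtain quasi-coercivity from G\r{a}rding's inequality via coefficient freezing and Plancherel (the paper cites \cite[Theorem~3.25]{GM05} for this), transfer the time regularity of the coefficients to $\A_1(\cdot)$, and invoke Theorem~\ref{thm:MRinRforformsonI}. Your write-up is in fact more explicit than the paper's own proof, which does not spell out the trace-space identification for $\alpha=1/2$; your observation that one needs $D(A(0)^{1/2})=H^1(\R^d)^n$, i.e.\ the Kato square-root property for Legendre--Hadamard systems on $\R^d$, is a point the paper leaves implicit.
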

Note that the domain of the elliptic operator is time dependent.
\begin{proof}
	We define the non-autonomous form $\fra \colon I \times H^1(\R^d)^n \times H^1(\R^d)^n\to \R$ by 
	\begin{equation*}
		\fra(t,v,w) = \int_{\R^d}\sum_{j,k=1}^d \sum_{l,m=1}^n  a_{jk}^{lm}(t,x) \partial_j v_m \, {\partial_k w_l} \ \mathrm{d}{x}. 
	\end{equation*}
	Then $\fra$ satisfies the conditions of Theorem~\ref{thm:MRinRforformsonI}.
	
	Indeed, boundedness and time regularity is a direct consequence of the assumptions above. Furthermore, quasi-coercivity may be obtained by localization of the coefficients $a_{jk}^{lm}(t,\cdot)$ and Plancherel's theorem (see \cite[Theorem~3.25]{GM05}).
\end{proof}


\subsection*{Time dependent generalized fractional Laplacians}
Let $I=[0,T]$ with $T>0$, $d \in \N$, $\alpha \in [0, 1/2]$ and $\beta \in (0,1)$.
Suppose $K \colon I \times \R^d\times \R^d \to \R$ is measurable, $K(t,\cdot,\cdot)$ is symmetric for all $t\in I$ and there exist constants $0<\eta<M$ such that $\eta\le K(t,x,y) \le M$ for all $t\in I$, $x,y \in \R^d$.
\begin{corollary}
	Let $\alpha \in [0, 1/2]$ and $\beta \in (0,1)$. In addition, suppose that $K \in \mathring W^{\alpha+\delta, \frac 1 \alpha}(I; L^\infty(\R^{2d}))$ for some $\delta>0$.
	Then for every $f \in L^2(I ;H^{(2\alpha -1)\beta}(\R^d))$, $u_0 \in H_0^{2\alpha \beta}(\R^d)$ there exists a unique $u \in H^\alpha(I;H^\beta(\R^d)) \cap H^1(I;H^{(2\alpha-1)\beta}(\R^d))$ such that
	\begin{equation*}
          	u'  +  \operatorname{p.v.} \int_{\R^d} K(t,x,y) \frac{(u(t,x)-u(t,y))}{\abs{x-y}^{2\beta+d}}\ \mathrm{d} y =  f, \quad u(0) =u_0.
	\end{equation*}
\end{corollary}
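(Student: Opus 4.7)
My plan is to apply Theorem~\ref{thm:MRinRforformsonI} with the Gelfand triple $V := H^\beta(\mathbb{R}^d) \hookrightarrow H := L^2(\mathbb{R}^d) \hookrightarrow V' = H^{-\beta}(\mathbb{R}^d)$ and with the perturbation $\mathfrak{a}_2 \equiv 0$, so that the parameter ``$\beta$'' appearing in that theorem is zero. The first thing to check is that the interpolation scale matches: by Proposition~\ref{prop:complexinterpolation} and the standard identifications $[L^2(\mathbb{R}^d), H^{\pm\beta}(\mathbb{R}^d)]_\gamma = H^{\pm\gamma\beta}(\mathbb{R}^d)$, one has $H_\gamma = H^{\gamma\beta}(\mathbb{R}^d)$ for $\gamma \in [-1,1]$. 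In particular $H_{2\alpha-1} = H^{(2\alpha-1)\beta}(\mathbb{R}^d)$ and $H_{2\alpha} = H^{2\alpha\beta}(\mathbb{R}^d) = H^{2\alpha\beta}_0(\mathbb{R}^d)$, which are precisely the spaces appearing in the statement; for $\alpha = 1/2$, the trace space in the theorem becomes $D(A(0)^{1/2}) = V = H^\beta(\mathbb{R}^d)$ by symmetry of the form and Kato's theorem (Proposition~\ref{prop:trace}).

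Next I would introduce the symmetric non-autonomous form
\[
	\mathfrak{a}(t,v,w) := \tfrac{1}{2}\int_{\mathbb{R}^d}\!\int_{\mathbb{R}^d} K(t,x,y)\,\frac{(v(x)-v(y))(w(x)-w(y))}{|x-y|^{2\beta+d}}\,\mathrm{d}y\,\mathrm{d}x.
\]
The pointwise two-sided bound $\eta \le K \le M$, together with the Gagliardo seminorm representation of $H^\beta(\mathbb{R}^d)$, immediately gives boundedness $|\mathfrak{a}(t,v,w)| \le M \|v\|_V \|w\|_V$ and the seminorm coercivity $\mathfrak{a}(t,v,v) \ge \tfrac{\eta}{2}[v]_{H^\beta}^2 = \tfrac{\eta}{2}\|v\|_V^2 - \tfrac{\eta}{2}\|v\|_H^2$, so $\mathfrak{a}$ is bounded and quasi-coercive. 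For time regularity I would estimate
\[
	\|\mathcal{A}(t)-\mathcal{A}(s)\|_{\mathcal{L}(V,V')} \le \tfrac{1}{2}\|K(t,\cdot,\cdot)-K(s,\cdot,\cdot)\|_{L^\infty(\mathbb{R}^{2d})},
\]
and hence the assumption $K \in \mathring W^{\alpha+\delta,1/\alpha}(I;L^\infty(\mathbb{R}^{2d}))$ transfers verbatim into $\mathcal{A}(\cdot) \in \mathring W^{\alpha+\delta,1/\alpha}(I;\mathcal{L}(V,V'))$, which is precisely the hypothesis of Theorem~\ref{thm:MRinRforformsonI} (with $\mathfrak{a}_2 = 0$, no further assumption is required). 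The case $\alpha = 0$ — where no regularity of $K$ is assumed — is covered directly by Theorem~\ref{thm:weakMRinRforformsonI}.

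It then remains to identify the abstract associated operator $\mathcal{A}(t)\colon V \to V'$ with the principal-value integral operator in the statement. A change of variables $x \leftrightarrow y$ using the symmetry $K(t,x,y) = K(t,y,x)$ gives, for test $v,w$,
\[
	\mathfrak{a}(t,v,w) = \int_{\mathbb{R}^d} w(x)\,\mathrm{p.v.}\!\int_{\mathbb{R}^d} K(t,x,y)\,\frac{v(x)-v(y)}{|x-y|^{2\beta+d}}\,\mathrm{d}y\,\mathrm{d}x,
\]
so $\mathcal{A}(t)v$ equals the stated principal-value integral in the sense of distributions, and the abstract evolution equation from Theorem~\ref{thm:MRinRforformsonI} becomes exactly the stated integro-differential problem. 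The main obstacle is not the abstract step — that one simply bookkeeps hypotheses — but the justification of the principal-value identification, where one must argue that the (a priori divergent) pointwise integrand is integrable against test functions by exploiting the antisymmetric combination $v(x)-v(y)$ paired with a test function $w$, and verify that the resulting distribution lies in $L^2(I;H^{(2\alpha-1)\beta}(\mathbb{R}^d))$ whenever $\mathcal{A} u$ does in the abstract sense.
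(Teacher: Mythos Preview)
Your proposal is correct and follows essentially the same route as the paper: set $V=H^\beta(\mathbb{R}^d)$, $H=L^2(\mathbb{R}^d)$, define the symmetric non-autonomous form with kernel $K$, verify boundedness and quasi-coercivity from the two-sided bound on $K$, transfer the $\mathring W^{\alpha+\delta,1/\alpha}$ regularity of $K$ to $\mathcal A(\cdot)$, and invoke Theorem~\ref{thm:MRinRforformsonI} (with symmetry giving $D(A(0)^{1/2})=H^\beta$ when $\alpha=1/2$). You actually supply more detail than the paper does---the identification $H_\gamma=H^{\gamma\beta}(\mathbb{R}^d)$, the explicit treatment of $\alpha=0$ via Theorem~\ref{thm:weakMRinRforformsonI}, and the principal-value identification of $\mathcal A(t)$---and your normalizing factor $\tfrac12$ in the form is the one that makes $\mathcal A(t)v$ match the stated integral operator exactly.
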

\begin{proof}
	We define the bounded and quasi-coercive non-autonomous form $\fra \colon I \times H^{\beta}(\R^d) \times H^{\beta}(\R^d) \to \R$ by
\[
	\fra(t,v,w) = \int_{\R^d} \int_{\R^d} K(t,x,y)\frac{(v(x)-v(y))(w(x)-w(y))}{\abs{x-y}^{2\beta}}\ \frac{\mathrm{d} x\ \mathrm{d} y}{\abs{x-y}^d}
\]
and we denote by $\A$ its associated operator. 
It is easy to check that $\A(\cdot) \in \mathring W^{\alpha+\delta, \frac 1 \alpha}(I;\mathcal{L}(H^{\beta}(\R^d), (H^{\beta}(\R^d))'))$.
Thus we may apply Theorem~\ref{thm:MRinRforformsonI}.
For the case $\alpha = \frac 1 2$, note that $D(A(0)^{1/2})=H^{\beta}(\R^d)$, where $A(0)$ denotes the part of $\A(0)$ in $H$, since $\fra(t,\cdot, \cdot)$ is symmetric.
\end{proof}

\section{Appendix: Vector valued fractional calculus}
The material covered in this appendix is well known, despite that we could not find all the needed results in the literature.
More results about vector valued Sobolev spaces can be found in \cite{Ama97}, \cite{MS12} and \cite{Sim90}.

Let $\alpha \in (0,1)$, let $p \in[1,\infty)$, let $I \subset \R$ be an interval and let $E$ be a Banach space.
Given a measurable function $f \colon I \to E$ we set
\begin{equation*}
	[f]_{W^{\alpha,p}(I;E)} := \left(\int_I\int_I \left(\frac{\norm{f(t)-f(s)}_{E}}{\abs{t-s}^{\alpha}} \right)^p \ \frac{ \mathrm{d}{t}\ \mathrm{d}{s}}{\abs{t-s}}\right)^{1/p}.
\end{equation*}
We define the homogeneous fractional Sobolev space
\[
	\mathring W^{\alpha,p}(I;E) := \{ f\colon I \to E \text{ measurable} : [f]_{W^{\alpha,p}(I;E)} < \infty\}
\]
and the fractional Sobolev space
\[
	 W^{\alpha,p}(I;E) := \{ f \in L^p(I; E) : [f]_{W^{\alpha,p}(I;E)} < \infty\}.
\]
Note that $[\cdot]_{W^{\alpha,p}(I;E)}$ is a seminorm
and $\bigl(W^{\alpha,p}(I;E) , \norm{\cdot}_{W^{\alpha,p}(I;E)} \bigr)$ is a Banach space where
\[
	\norm{f}_{W^{\alpha,p}(I;E)} := \left( \norm f_{L^{p}(I;E)}^p + [f]_{W^{\alpha,p}(I;E)}^p \right)^{1/p}.
\]
We collect some well known results about these spaces.
\begin{proposition}\label{prop:extension}
	Let $\alpha \in (0,1)$, let $p \in[1,\infty)$, let $I =(a,b)$, where $-\infty <a<b<\infty$ and let $E$ be a Banach space.
	There exists an operator $\mathcal E \in \mathcal{L}(W^{\alpha,p}(I;E), W^{\alpha,p}(\R;E))$, such that
	$(\mathcal Ef)\vert_I = f$ for all $f\in W^{\alpha,p}(I;E)$ and $\operatorname{supp} (\mathcal Ef) \subset (a-(b-a), b+ (b-a))$.
\end{proposition}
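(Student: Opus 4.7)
The plan is to construct $\mathcal E$ by reflection across the endpoints $a,b$ followed by multiplication with a smooth cutoff.

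\emph{Step 1 (Reflection).} Set $J := (2a-b,\, 2b-a)$ and define
\[
	(\mathcal R f)(t) := \begin{cases} f(2a-t), & t \in (2a-b, a), \\ f(t), & t \in [a,b], \\ f(2b-t), & t \in (b, 2b-a). \end{cases}
\]
I claim that $\mathcal R \in \mathcal L(W^{\alpha,p}(I;E), W^{\alpha,p}(J;E))$. The $L^p$ bound is clear from the substitutions $\tau = 2a-t$ and $\tau = 2b-t$. For the Gagliardo seminorm, I split the double integral over $J \times J$ into nine subregions according to which of $(2a-b,a)$, $(a,b)$, $(b,2b-a)$ the variables $t,s$ lie in. The three diagonal contributions reduce directly to $[f]_{W^{\alpha,p}(I;E)}^p$ by change of variables. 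For a typical off-diagonal piece with $t \in (2a-b,a)$ and $s \in (a,b)$, the substitution $\tau = 2a-t$ together with the elementary inequality $|\tau - s| \le |t-s|$ (writing $t = a-h$, $s = a+k$ with $h,k>0$ gives $|\tau-s| = |h-k| \le h+k = |t-s|$) yields
\[
	\int_{2a-b}^a \!\! \int_a^b \frac{\norm{f(2a-t)-f(s)}_E^p}{|t-s|^{1+\alpha p}} \,\mathrm d s \,\mathrm d t \le [f]_{W^{\alpha,p}(I;E)}^p.
\]
The remaining off-diagonal subregions (including the one involving reflections at both endpoints) are treated analogously.

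\emph{Step 2 (Cutoff).} I fix $\phi \in C_c^\infty(\R)$ with $\phi \equiv 1$ on $[a,b]$ and $\operatorname{supp} \phi$ a compact subset of $J$, and define $\mathcal E f := \phi \cdot \mathcal R f$, extended by $0$ outside $J$. The support condition and $(\mathcal E f)|_I = f$ are then immediate. To verify $\mathcal E f \in W^{\alpha,p}(\R;E)$, I split, for $t,s \in J$,
\[
	(\mathcal E f)(t) - (\mathcal E f)(s) = \phi(t) \bigl[(\mathcal R f)(t) - (\mathcal R f)(s)\bigr] + (\mathcal R f)(s) \bigl[\phi(t) - \phi(s)\bigr].
\]
The first summand contributes at most $\norm{\phi}_\infty^p [\mathcal R f]^p_{W^{\alpha,p}(J;E)}$ to the seminorm. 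For the second, I use $|\phi(t)-\phi(s)| \le C \min(1, |t-s|)$, which produces the integrable factor $|t-s|^{(1-\alpha)p-1}$ near the diagonal and $|t-s|^{-1-\alpha p}$ far from it, both controlled by a constant times $\norm{\mathcal R f}_{L^p(J;E)}^p$. When $t \notin J$ (or $s \notin J$) only $s \in \operatorname{supp}\phi$ (resp.\ $t \in \operatorname{supp}\phi$) gives a nonzero contribution, and then $|t-s| \ge \operatorname{dist}(\operatorname{supp}\phi, \partial J) > 0$, which again yields a bound by a constant times $\norm{\mathcal R f}_{L^p(J;E)}^p$.

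The only point requiring any care is the off-diagonal estimate in Step~1; once the geometric observation $|(2a-t)-s| \le |t-s|$ for $t < a < s$ is noted, the reflection bound is immediate, and the cutoff argument in Step~2 presents no further difficulty.
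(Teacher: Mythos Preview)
Your construction is exactly the one the paper uses---reflection across the endpoints followed by multiplication with a smooth cutoff supported in $(a-(b-a),b+(b-a))$---and your verification is correct; the paper merely writes down the formula for $\mathcal E f$ and asserts that ``it is easy to check'' the required bounds. Your Step~1 geometric observation $|(2a-t)-s|\le|t-s|$ for $t<a<s$ and the analogous inequalities for the other off-diagonal blocks are precisely the content of that omitted check.
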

\begin{proof}
	Let $\varphi \in \D(\R)$ with $\varphi(t)=1$ for all $t \in I$ and $\operatorname{supp} \varphi \subset (a-(b-a), b+ (b-a))$.
	It is easy to check that the mapping defined by
	\[
		(\mathcal Ef)(t) :=
		\begin{cases}
			f(t), & t\in I\\
			\varphi(t)f(2a-t), & t\in(a-(b-a),a)\\
			\varphi(t)f(2b-t), & t\in(b,b+(b-a))\\
			0, &\text{else}
		\end{cases}
	\]
	has the desired properties.
\end{proof}

\begin{proposition}\label{prop:dense}
	Let $\alpha \in (0,1)$, let $p \in[1,\infty)$, let $I \subset \R$ be an interval and let $E$ be a Banach space.
	The $E$ valued test functions $\D(\R;E)$ are dense in $W^{\alpha,p}(\R;E)$
	and the smooth functions $C^\infty(I;E)$ are dense in $W^{\alpha,p}(I;E)$.
\end{proposition}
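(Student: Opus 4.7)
The plan is to establish density on $\R$ by a standard cutoff-then-mollify construction, and then to reduce the interval case to the real-line case via Proposition~\ref{prop:extension}. Fix $f \in W^{\alpha,p}(\R;E)$ and take $\chi \in \D(\R)$ with $\chi \equiv 1$ on a neighbourhood of $0$; define $\chi_n(t):=\chi(t/n)$. I claim $\chi_n f \to f$ in $W^{\alpha,p}(\R;E)$. The $L^p$-convergence is standard dominated convergence. For the Gagliardo seminorm, use the decomposition
\[
(\chi_n-1)(t)f(t) - (\chi_n-1)(s)f(s) = (\chi_n(t)-1)(f(t)-f(s)) + (\chi_n(t)-\chi_n(s))f(s)
\]
and estimate the two resulting double integrals separately. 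The first tends to zero by dominated convergence since $\chi_n(t) \to 1$ pointwise while remaining uniformly bounded. For the second, split at $|t-s|=R$: on $\{|t-s|>R\}$ use $|\chi_n(t)-\chi_n(s)|\le 2\|\chi\|_\infty$ to obtain a bound $\lesssim R^{-\alpha p}\|f\|_{L^p(\R;E)}^p$; on $\{|t-s|\le R\}$ use the Lipschitz estimate $|\chi_n(t)-\chi_n(s)|\le \|\chi'\|_\infty|t-s|/n$ to obtain a bound $\lesssim R^{p(1-\alpha)}n^{-p}\|f\|_{L^p(\R;E)}^p$. Choosing first $R$ large and then $n$ large concludes this step.

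Next, given a compactly supported $g \in W^{\alpha,p}(\R;E)$, let $\rho_n$ be a standard mollifier and set $g_n := \rho_n * g \in \D(\R;E)$. The $L^p$ convergence is classical; for the seminorm, Minkowski's integral inequality applied in $L^p(\R^2, dt\,ds/|t-s|^{1+\alpha p}; E)$ yields
\[
[g_n - g]_{W^{\alpha,p}(\R;E)} \le \int_\R \rho_n(u)\,[\tau_u g - g]_{W^{\alpha,p}(\R;E)}\,du,
\]
where $\tau_u g(t) := g(t-u)$. It then suffices to prove the translation-continuity statement $[\tau_u g - g]_{W^{\alpha,p}(\R;E)} \to 0$ as $u \to 0$. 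The clean way is the change of variables $(t,s) \mapsto (m,r) := (s,\, t-s)$, which, since the weight depends only on $r$, gives
\[
[\tau_u g - g]_{W^{\alpha,p}(\R;E)}^p = \int_\R \frac{\|\tau_u \psi_r - \psi_r\|_{L^p(\R;E)}^p}{|r|^{1+\alpha p}}\,dr, \qquad \psi_r(m) := g(m+r) - g(m).
\]
For each fixed $r$, ordinary translation continuity in $L^p$ gives $\|\tau_u \psi_r - \psi_r\|_{L^p(\R;E)} \to 0$; the uniform bound $\|\tau_u \psi_r - \psi_r\|_{L^p(\R;E)}^p \le 2^p \|\psi_r\|_{L^p(\R;E)}^p$ together with the identity $\int_\R \|\psi_r\|_{L^p(\R;E)}^p / |r|^{1+\alpha p}\,dr = [g]_{W^{\alpha,p}(\R;E)}^p$ allows me to conclude by dominated convergence.

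For the interval case with $I = (a,b)$ bounded, I would apply the extension operator $\mathcal E$ from Proposition~\ref{prop:extension} to obtain $\mathcal E f \in W^{\alpha,p}(\R;E)$ with compact support, approximate $\mathcal E f$ by $\varphi_k \in \D(\R;E)$ in $W^{\alpha,p}(\R;E)$ as above, and observe that restriction is a bounded map $W^{\alpha,p}(\R;E) \to W^{\alpha,p}(I;E)$, so that $\varphi_k|_I \in C^\infty(I;E)$ converges to $f$ in $W^{\alpha,p}(I;E)$. The case $I=\R$ is the first part already proved, and half-line or other unbounded intervals are handled by the obvious analogue of Proposition~\ref{prop:extension} (reflection across the finite endpoint combined with a smooth cutoff). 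The main obstacle in the whole argument is the translation-continuity lemma, as it is the one place where the nonlocal double-integral structure of the Gagliardo seminorm must be handled directly rather than reduced to ordinary $L^p$ techniques; the change-of-variables trick above is what turns it back into a statement about $L^p$-continuity of translation.
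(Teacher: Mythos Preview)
Your argument is correct but takes a genuinely different route from the paper. The paper's proof is a two-line reduction: it invokes the real interpolation identity $W^{\alpha,p}(\R;E) = \bigl(L^p(\R;E), W^{1,p}(\R;E)\bigr)_{\alpha,p}$ together with the classical density of $\D(\R;E)$ in $W^{1,p}(\R;E)$ and a general density principle for real interpolation spaces (cited from Triebel), and then deduces the interval case from Proposition~\ref{prop:extension} exactly as you do. You instead give a direct, self-contained cutoff-then-mollify construction, handling the Gagliardo seminorm explicitly; the translation-continuity step for $[\cdot]_{W^{\alpha,p}}$ via the change of variables $(t,s)\mapsto(s,t-s)$ is the technical heart of your approach. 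The paper's route is far more economical but leans on the interpolation characterization of $W^{\alpha,p}$, which in the vector-valued setting is itself a nontrivial citation; your route is longer but entirely elementary and avoids interpolation theory altogether. Both proofs treat the interval case identically via extension; as you note, Proposition~\ref{prop:extension} is stated only for bounded intervals, so the half-line case needs the obvious one-sided reflection---a point the paper's proof leaves implicit.
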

\begin{proof}
	Since $\D(\R;E)$ is dense in $W^{1,p}(\R;E)$ and since
	\[
		W^{\alpha,p}(\R;E) = \left( L^p(\R;E) , W^{1,p}(\R;E) \right)_{\alpha, p}
	\]
	we obtain that $\D(\R;E)$ is also dense in $W^{\alpha,p}(\R;E)$ by \cite[p.~39]{Tri95}.
	
	The second statement follows by Proposition~\ref{prop:extension} and the first statement.
\end{proof}

Let $\alpha \in (0,1)$, let $p \in[1,\infty)$, let $I \subset \R$ be an interval and let $E$ be a Banach space.
Given a function $f \colon I \to E$ we set
\begin{equation*}
	[f]_{C^{\alpha}(I;E)} := \sup_{s,t \in I} \frac{\norm{f(t)-f(s)}_E}{\abs{t-s}^\alpha}.
\end{equation*}
We define the space of Hölder continuous functions
\[
	C^{\alpha}(I;E) := \{ f\in C(I;E) : [f]_{C^{\alpha}(I;E)} < \infty\}.
\]
Note that $C^{\alpha}(I;E)$ is a Banach space with respect to the norm
\[
	\norm{f}_{C^{\alpha}(I;E)} := \norm{f}_{L^\infty(I;E)} + [f]_{C^{\alpha}(I;E)}.
\]

By \cite[Corollary~26]{Sim90} we have what follows.
\begin{proposition}\label{prop:Hölder}
	Let $\alpha \in (0,1)$, let $p > \frac 1 \alpha$, let $I \subset \R$ be an interval and let $E$ be a Banach space.
	Then
	\begin{equation*}
		\mathring W^{\alpha,p}(I;E) \hookrightarrow \mathring C^{\alpha-\frac 1 p}(I;E).
	\end{equation*}
\end{proposition}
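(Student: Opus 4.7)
I will establish the pointwise bound $\norm{f(t)-f(s)}_E \leq C\abs{t-s}^{\alpha-1/p}\,[f]_{W^{\alpha,p}(I;E)}$ for $s,t \in I$ via a dyadic averaging and telescoping argument, the fractional counterpart of the classical Morrey embedding proof for $W^{1,p}$; the Hölder estimate is first obtained on the set of Lebesgue points of $f$ and then extended to $I$ by uniform continuity.

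The crux is an \emph{averaging lemma}: for subintervals $J_1, J_2 \subset I$ of comparable length $|J_i| \sim r$ and with $\operatorname{dist}(J_1, J_2) \lesssim r$, the integral means $\bar f_{J_i} := \frac{1}{|J_i|}\int_{J_i} f$ satisfy
\[
\norm{\bar f_{J_1} - \bar f_{J_2}}_E \leq C\, r^{\alpha - 1/p}\, [f]_{W^{\alpha,p}(J_1 \cup J_2;E)}.
\]
To prove this, I would rewrite the left-hand side as $\frac{1}{|J_1||J_2|}\int_{J_1}\int_{J_2}(f(u) - f(v))\,du\,dv$ and apply Hölder's inequality with exponents $p$ and $p/(p-1)$ after the factorization $\norm{f(u)-f(v)}_E = \frac{\norm{f(u)-f(v)}_E}{\abs{u-v}^{(1+\alpha p)/p}} \cdot \abs{u-v}^{(1+\alpha p)/p}$. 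The first factor contributes (to the $p$-th power) the local Gagliardo seminorm, while a direct computation of $\int_{J_1}\int_{J_2} \abs{u-v}^{(1+\alpha p)/(p-1)}\,du\,dv$ produces a geometric factor $r^{2 + \alpha - 1/p}$, exactly balancing the $r^{-2}$ prefactor.

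With this lemma in hand, fix Lebesgue points $s < t$ of $f$ with $h = t - s$, and consider the nested dyadic intervals $J_k^{(s)} := (s, s + 2^{-k}h)$ and $J_k^{(t)} := (t - 2^{-k}h, t)$, $k \geq 0$. By the Lebesgue differentiation theorem, $\bar f_{J_k^{(s)}} \to f(s)$ and $\bar f_{J_k^{(t)}} \to f(t)$, while $J_0^{(s)} = J_0^{(t)} = (s,t)$ makes the zeroth-order averages cancel, yielding
\[
f(t) - f(s) = \sum_{k \geq 0}\bigl(\bar f_{J_{k+1}^{(t)}} - \bar f_{J_k^{(t)}}\bigr) - \sum_{k \geq 0}\bigl(\bar f_{J_{k+1}^{(s)}} - \bar f_{J_k^{(s)}}\bigr).
\]
Applying the averaging lemma at scale $r_k = 2^{-k}h$ bounds each consecutive difference by $C(2^{-k}h)^{\alpha - 1/p}[f]_{W^{\alpha,p}(I;E)}$. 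Summability of $\sum_k 2^{-k(\alpha - 1/p)}$ is equivalent to $\alpha p > 1$, i.e.\ precisely to the hypothesis $p > 1/\alpha$; this gives the pointwise Hölder estimate on a full-measure subset, whence uniform continuity produces the continuous representative in $C^{\alpha - 1/p}(I;E)$.

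The main obstacle is the careful bookkeeping of exponents in the averaging lemma — the computation has to hit $r^{2+\alpha - 1/p}$ exactly — and the recognition that the telescoping sum collapses to a geometric series whose convergence is sharp: without $\alpha p > 1$ the dyadic scales cannot be summed, which is consistent with the failure of the embedding in that regime. Everything else is routine Hölder/Lebesgue manipulation.
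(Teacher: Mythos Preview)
Your argument is correct: the averaging lemma follows from H\"older's inequality exactly as you outline (the exponent bookkeeping yields $r^{2+\alpha-1/p}$ as claimed), the nested dyadic intervals $J_k^{(s)}, J_k^{(t)}$ have the right geometry for the lemma, and the telescoping series converges precisely when $\alpha p>1$. One small point you might make explicit is that any measurable $f$ with $[f]_{W^{\alpha,p}(I;E)}<\infty$ is automatically in $L^1_{\mathrm{loc}}(I;E)$, so that the averages $\bar f_J$ and the Lebesgue differentiation theorem are available; this follows by fixing one good $s$ and observing that $t\mapsto\norm{f(t)-f(s)}_E$ lies in $L^p$ on bounded subintervals.

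As for comparison with the paper: the paper does not actually prove this proposition, it simply invokes \cite[Corollary~26]{Sim90}. Your route is therefore genuinely different in that it is self-contained. What you gain is an explicit constant and a transparent explanation of why the threshold $p=1/\alpha$ is sharp (the geometric series diverges otherwise); what the citation buys is brevity and access to Simon's more general framework, which also covers the companion embeddings used elsewhere in the appendix. Both are perfectly legitimate for an appendix result of this kind.
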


If $\alpha > \frac 1 p$ we identify the function $f \in W^{\alpha,p}(I;E)$ with its continuous version.

\begin{proposition}\label{prop:const_ext}
	Let $\alpha \in (0,1)$, let $p > \frac 1 \alpha$, let $I \subset \R$ be an interval and let $E$ be a Banach space.
	For any $t \in \overline I$ the mappings
	\[
		\mathcal E_t^r \colon \mathring W^{\alpha,p}(I;E) \to \mathring W^{\alpha,p}(I \cup [t,\infty);E), \quad (\mathcal E_t^r f)(s) = \begin{cases} u(s), & s\in I,\, s<t\\ u(t), & s \ge t \end{cases}
	\]
	and
	\[
		\mathcal E_t^l \colon \mathring W^{\alpha,p}(I;E) \to \mathring W^{\alpha,p}(I \cup (-\infty,t];E), \quad (\mathcal E_t^l f)(s) = \begin{cases} u(s), & s\in I,\, s>t\\ u(t), & s \le t \end{cases}
	\]
	define bounded operators.
\end{proposition}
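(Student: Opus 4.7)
It suffices to treat $\mathcal E_t^r$, since $\mathcal E_t^l$ follows by the reflection $s\mapsto -s$. Set $u=\mathcal E_t^r f$, $J=I\cup[t,\infty)$ and $I_{<t}=I\cap(-\infty,t)$. The Gagliardo seminorm $[u]^p_{W^{\alpha,p}(J;E)}$ splits into contributions from $I_{<t}\times I_{<t}$ (bounded by $[f]^p_{W^{\alpha,p}(I;E)}$ because $u=f$ there), from $[t,\infty)\times[t,\infty)$ (vanishing since $u$ is constant there), and from the cross region $I_{<t}\times[t,\infty)$. Evaluating the inner $s_2$-integral in the cross region explicitly,
\[
\int_{I_{<t}}\!\int_t^\infty \frac{\|f(s_1)-f(t)\|_E^p}{(s_2-s_1)^{1+\alpha p}}\,ds_2\,ds_1 = \frac{1}{\alpha p}\,\mathcal H(t),\qquad \mathcal H(t):=\int_{I_{<t}}\frac{\|f(s)-f(t)\|_E^p}{(t-s)^{\alpha p}}\,ds,
\]
where $f(t)$ stands for the value of the continuous representative (available via Proposition~\ref{prop:Hölder} since $\alpha p>1$). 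Thus the boundedness of $\mathcal E_t^r$ reduces to a fractional Hardy-type inequality $\mathcal H(t)\le C\,[f]^p_{W^{\alpha,p}(I;E)}$ uniformly in $t\in\overline I$ and $f$.

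I plan to prove this Hardy bound by a one-step averaging argument with a scale parameter $\mu\in(0,1/2)$ to be chosen small. For $s\in I_{<t}$ put $h=t-s$ and $J_s=(t-\mu h,\, t-\mu h/2)$, a subinterval of $(s,t)\subset I$. For every $\sigma\in J_s$ the triangle inequality gives $\|f(s)-f(t)\|\le \|f(s)-f(\sigma)\|+\|f(\sigma)-f(t)\|$. Averaging $\sigma$ over $J_s$ and combining the $p$-convex splitting $(a+b)^p\le(1+\varepsilon)a^p+C_\varepsilon b^p$ with Jensen's inequality yields
\[
\|f(s)-f(t)\|^p\le \frac{2(1+\varepsilon)}{\mu h}\int_{J_s}\|f(s)-f(\sigma)\|^p\,d\sigma + \frac{2C_\varepsilon}{\mu h}\int_{J_s}\|f(\sigma)-f(t)\|^p\,d\sigma.
\]
Dividing by $h^{\alpha p}$ and integrating in $s$, the first double integral is controlled using $|s-\sigma|\asymp h$ and Fubini by $C_\mu(1+\varepsilon)\,[f]^p_{W^{\alpha,p}(I;E)}$. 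For the second, after exchanging the order of integration, I use $t-\sigma\asymp\mu h$: for fixed $\sigma$ the admissible $s$-fiber has length $\asymp(t-\sigma)/\mu$ and $h^{-\alpha p-1}\asymp\bigl(\mu/(t-\sigma)\bigr)^{\alpha p+1}$, so the double integral is bounded by $C\,\mu^{\alpha p-1}\,\mathcal H(t)$. Altogether,
\[
\mathcal H(t)\le \frac{C_1(1+\varepsilon)}{\mu}\,[f]^p_{W^{\alpha,p}(I;E)} + C_2 C_\varepsilon\,\mu^{\alpha p-1}\,\mathcal H(t).
\]

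Since $\alpha p>1$ by hypothesis, $\mu^{\alpha p-1}\to 0$ as $\mu\to 0^+$. Fixing $\varepsilon=1$ and choosing $\mu$ small enough that $C_2C_1\mu^{\alpha p-1}<\tfrac12$, the last term is absorbed, yielding $\mathcal H(t)\le C\,[f]^p_{W^{\alpha,p}(I;E)}$ with $C$ independent of $t\in\overline I$ and of $f$. To make the absorption legitimate I first establish the inequality for smooth $f$ (for which $\mathcal H(t)<\infty$ is easy, using the Hölder embedding of Proposition~\ref{prop:Hölder} together with a cut-off when $I$ is unbounded) and then pass to the general case by the density statement of Proposition~\ref{prop:dense}. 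The main obstacle is precisely the absorption step: the proof hinges on the shrinking factor $\mu^{\alpha p-1}$ being strictly small, which is exactly where the hypothesis $p>1/\alpha$ enters essentially.
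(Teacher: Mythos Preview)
Your argument is correct and follows the same skeleton as the paper's proof: split the Gagliardo seminorm of the extension into the interior piece, the (vanishing) constant piece, and the cross term, then reduce the cross term to the fractional Hardy quantity $\mathcal H(t)=\int_{I_{<t}}\|f(s)-f(t)\|_E^p\,(t-s)^{-\alpha p}\,ds$. The paper, after the same reduction and the normalisation $f(t)=0$, simply quotes the Hardy estimate
\[
\Bigl(\int_I \bigl(\|f(s)\|_E\, s^{-\alpha}\bigr)^p\,ds\Bigr)^{1/p}\le \frac{1+\alpha-1/p}{\alpha-1/p}\,[f]_{W^{\alpha,p}(I;E)}
\]
from \cite[(6.8)]{PSS07}, whereas you supply a self-contained proof via the one-scale averaging $J_s=(t-\mu h,\,t-\mu h/2)$ and absorption of the factor $\mu^{\alpha p-1}$. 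Your Fubini computation is correct (the $s$-fibre for fixed $\sigma$ has length $\asymp(t-\sigma)/\mu$ and $h\asymp(t-\sigma)/\mu$ there, giving precisely the $\mu^{\alpha p-1}\mathcal H(t)$ contribution), and the place where $p>1/\alpha$ enters is exactly as you identify. The a~priori finiteness needed for the absorption is handled just as in the paper, by working first with smooth $f$ (Lipschitz near $t$, so the integrand is $O((t-s)^{p-\alpha p})$; for unbounded $I$ the constant cited from \cite{PSS07} is interval-independent, and your constants $C_1,C_2$ are as well, so one may restrict to $I\cap[t-N,t]$ and let $N\to\infty$ by monotone convergence) and then invoking density. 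In short: same decomposition and same Hardy inequality; you prove it by hand, the paper cites it.
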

\begin{proof}
	We consider the interval $I=[0,T]$ and the operator $\mathcal E_0^l$, the other cases are similar.
	Let $f \in C^\infty(I;E)$. By subtracting $f(0)$ we may assume $f(0)=0$.
	We have
	\begin{multline*}
		[\mathcal E_0^l f]_{W^{\alpha,p}((-\infty,T];E)}^p\\ = \int_I\int_I \left(\frac{\norm{f(t)-f(s)}_{E}}{\abs{t-s}^{\alpha}} \right)^p \ \frac{ \mathrm{d}{t}\ \mathrm{d}{s}}{\abs{t-s}} 
			+ 2\int_I\int_{(-\infty,0)} \left(\frac{\norm{f(s)}_{E}}{(s-t)^{\alpha}} \right)^p \ \frac{ \mathrm{d}{t}\ \mathrm{d}{s}}{s-t}\\
			=[f]_{W^{\alpha,p}(I;E)}^p + \frac 2 {\alpha p} \int_I \left(\frac{\norm{f(s)}_{E}}{s^{\alpha}} \right)^p \ \mathrm{d}{s}.
	\end{multline*}
	Since $f$ is smooth and thus also Lipschitz continuous, the second term on the right hand side is finite.
	By \cite[p.~745, (6.8)]{PSS07} we have that 
	\[
		\left(\int_I \left(\frac{\norm{f(s)}_{E}}{s^{\alpha}} \right)^p \ \mathrm{d}{s} \right)^{1/p} \le \frac{1+\alpha- 1/ p}{\alpha- 1/ p} [f]_{W^{\alpha,p}(I;E)}.
	\]
	Hence $[\mathcal E_0^l f]_{W^{\alpha,p}((-\infty,T];E)}^p \le C [f]_{W^{\alpha,p}(I;E)}$ for some constant $C$ depending only on $\alpha$ and $p$.
	Finally, by Proposition~\ref{prop:dense} and Proposition~\ref{prop:Hölder} this estimate extends to arbitrary $f \in W^{\alpha,p}(I;E)$.
\end{proof}

Let $H$ be a Hilbert space and let $\alpha\in [0,\infty)$.
We define 
\[
	H^\alpha(\R;H):= \{ u \in L^2(\R;H) : \abs{\xi}^\alpha \hat u(\xi) \in L^2(\R;H) \}
\]
and for $u \in H^\alpha(\R;H)$ we define $\partial^\alpha u$ by $(\partial^\alpha u)\hat{}\,(\xi) = (i\xi)^\alpha\hat u (\xi)$
and $\abs{\partial}^\alpha u$ by $(\abs{\partial}^\alpha u)\hat{}\,(\xi) = \abs{\xi}^\alpha\hat u (\xi)$.
Then $H^\alpha(\R;H)$ is a Hilbert space with the norm
\[
	\norm{u}_{H^\alpha_H}^2 := \norm{u}_{L^2_H}^2 + \norm{\partial^\alpha u}_{L^2_H}^2.
\]
\begin{proposition}\label{prop:isometry}
	For $\alpha \in (0,1)$ we have $H^\alpha(\R;H)=W^{\alpha,2}(\R;H)$ with
	\[
		C_\alpha \norm{\partial^\alpha u}_{L^2_H}^2 = [f]_{W^{\alpha,2}(\R;H)}^2,
	\]
	where $C_\alpha := 2\int_\R \frac{1-\cos{s}}{\abs{s}^{1+2\alpha}} \ \mathrm{d}{s}$.
\end{proposition}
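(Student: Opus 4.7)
The natural approach is to compute both sides via the Fourier transform and reduce the identity to a concrete one-dimensional integral. The plan is first to rewrite the seminorm using the translation substitution $h = t-s$, giving
\[
    [f]_{W^{\alpha,2}(\R;H)}^2 = \int_\R \int_\R \frac{\norm{f(s+h)-f(s)}_H^2}{\abs{h}^{1+2\alpha}} \ \mathrm{d}{s}\ \mathrm{d}{h}.
\]
Then I would apply Fubini (the integrand is non-negative, so Tonelli applies without issue) and, for each fixed $h$, use the vector-valued Plancherel theorem together with the identity $\widehat{f(\cdot+h)}(\xi) = e^{i\xi h}\hat f(\xi)$ to obtain
\[
    \int_\R \norm{f(s+h)-f(s)}_H^2\ \mathrm{d}{s} = \int_\R \abs{e^{i\xi h}-1}^2 \norm{\hat f(\xi)}_H^2 \ \mathrm{d}{\xi} = \int_\R 2(1-\cos(\xi h)) \norm{\hat f(\xi)}_H^2 \ \mathrm{d}{\xi}.
\]

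Putting these together and swapping the order of integration once more, it remains to evaluate
\[
    \int_\R \frac{2(1-\cos(\xi h))}{\abs{h}^{1+2\alpha}}\ \mathrm{d}{h}
\]
for each fixed $\xi$. The substitution $s = \xi h$ (treating the cases $\xi>0$ and $\xi<0$ symmetrically) pulls out a factor $\abs{\xi}^{2\alpha}$, leaving exactly the constant $C_\alpha$ in the statement. By Plancherel's theorem and the definition $(\partial^\alpha u)\hat{}\,(\xi) = (i\xi)^\alpha \hat u(\xi)$, the remaining $\xi$-integral is $C_\alpha \norm{\partial^\alpha u}_{L^2_H}^2$, which yields the asserted identity.

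The only small points requiring care are the finiteness of $C_\alpha$ (the behavior $1-\cos s \sim s^2/2$ near $0$ and boundedness at infinity give integrability precisely for $\alpha\in(0,1)$) and the equality $H^\alpha(\R;H)=W^{\alpha,2}(\R;H)$ as sets; the latter follows because the established identity shows that either seminorm is finite if and only if the other is, and both spaces are normed by adding $\norm{u}_{L^2_H}^2$ to the respective squared seminorm. I do not expect any real obstacle here: the vector-valued Plancherel identity is standard for Hilbert-space-valued $L^2$ functions, and the Tonelli application is clean since all integrands are non-negative.
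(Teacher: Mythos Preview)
Your proposal is correct and follows essentially the same approach as the paper: Plancherel, the translation identity on the Fourier side, Fubini/Tonelli, and the substitution $s=\xi h$ to extract the factor $\abs{\xi}^{2\alpha}$. The only cosmetic difference is direction---the paper starts from $C_\alpha\norm{\partial^\alpha u}_{L^2_H}^2$ and unwinds to the seminorm, whereas you start from the seminorm---and you make explicit the finiteness of $C_\alpha$ and the set equality, which the paper leaves tacit.
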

\begin{proof}
	First note that
	\begin{equation*}
		C_\alpha \abs{\xi}^{2\alpha}  = 2\int_\R \frac{1-\cos(\xi h)}{\abs{h}^{1+2\alpha}} \ \mathrm{d}{h} = \int_\R \frac{\abs{1-e^{i\xi h}}^2}{\abs{h}^{1+2\alpha}} \ \mathrm{d}{h}
	\end{equation*}
	by substituting $s$ with  $h\abs{\xi}$.
	Let $u \in L^2(\R;H)$.
	By Plancherel's theorem and by Fubini's theorem we have
	\begin{multline*}
		C_\alpha \norm{\partial^\alpha u}_{L^2_H}^2 =  C_\alpha\norm{\abs{\xi}^\alpha \hat u(\xi)}_{L^2_H}^2
		= \int_\R C_\alpha \abs{\xi}^{2\alpha} \norm{ \hat u(\xi)}_{H}^2 \ \mathrm{d}{\xi} \\
		= \int_\R \int_\R \frac{\abs{1-e^{i\xi h}}^2}{\abs{h}^{1+2\alpha}} \ \mathrm{d}{h} \norm{ \hat u(\xi)}_{H}^2 \ \mathrm{d}{\xi}
		= \int_\R \int_\R \frac{\norm{(1-e^{i\xi h}) \hat u(\xi)}_{H}^2}{\abs{h}^{1+2\alpha}}   \ \mathrm{d}{\xi} \ \mathrm{d}{h}\\
		= \int_\R \int_\R \frac{\norm{u(t)-u(t+h)}_{H}^2}{\abs{h}^{1+2\alpha}}   \ \mathrm{d}{t} \ \mathrm{d}{h}
		= [f]_{W^{\alpha,2}(\R;H)}^2. \tag*\qedhere
	\end{multline*}
\end{proof}
By polarization we also have 
\[
	C_\alpha(\partial^\alpha u \, \vert \, \partial^\alpha v)_{L^2_H} = \int_\R \int_\R \frac{(u(t)-u(s) \, \vert \, v(t)-v(s))_{H}}{\abs{t-s}^{1+2\alpha}}   \ \mathrm{d}{t} \ \mathrm{d}{s} \quad (u,v \in H^\alpha(\R;H)).
\]

\begin{corollary}\label{cor:isometry}
	For every $u \in H^\alpha(\R;V')$ and every $v \in H^\alpha(\R;V)$ we have
	\[
		C_\alpha \int_\R \langle\partial^\alpha u , \partial^\alpha v\rangle \ \mathrm{d}t = \int_\R \int_\R \frac{\langle u(t)-u(s), v(t)-v(s)\rangle}{\abs{t-s}^{1+2\alpha}}   \ \mathrm{d}{t} \ \mathrm{d}{s}.
	\]
\end{corollary}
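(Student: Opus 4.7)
The plan is to derive the corollary from the polarized version of Proposition~\ref{prop:isometry} (stated between that proposition and the corollary) by a density argument that upgrades the scalar product $(\cdot\mid\cdot)_H$ to the duality pairing $\langle\cdot,\cdot\rangle$ in the Gelfand triple $V\hookrightarrow H\hookrightarrow V'$.

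First, I would verify the identity on the common dense subspace $\D(\R;V)\times\D(\R;V)$. If $u,v\in\D(\R;V)$, then since $V\hookrightarrow H$ both functions lie in $H^\alpha(\R;H)$, and the polarized identity just before the corollary gives
\[
  C_\alpha (\partial^\alpha u \mid \partial^\alpha v)_{L^2_H} = \int_\R\int_\R \frac{(u(t)-u(s)\mid v(t)-v(s))_H}{\abs{t-s}^{1+2\alpha}}\,\mathrm{d}t\,\mathrm{d}s.
\]
By the identification $j\colon H\hookrightarrow V'$ from Section~2, $(x\mid y)_H=\langle x,y\rangle$ for $x\in H\subset V'$ and $y\in V$. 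Applied pointwise in $t$ (and to $\partial^\alpha u(t)$, which lies in $H$ when $u\in\D(\R;V)$), this turns both inner products into duality pairings, so the corollary's identity holds on $\D(\R;V)\times\D(\R;V)$.

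Next, both sides of the desired identity are continuous bilinear forms on $H^\alpha(\R;V')\times H^\alpha(\R;V)$. Cauchy--Schwarz bounds the left hand side by $\norm{\partial^\alpha u}_{L^2_{V'}}\norm{\partial^\alpha v}_{L^2_V}$. For the right hand side, Cauchy--Schwarz on the double integral combined with Proposition~\ref{prop:isometry} applied separately in $V'$ and $V$ produces the bound $C_\alpha\norm{\partial^\alpha u}_{L^2_{V'}}\norm{\partial^\alpha v}_{L^2_V}$. Hence it suffices to extend the identity by density.

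Density of $\D(\R;V)$ in $H^\alpha(\R;V)$ is Proposition~\ref{prop:dense}. To extend in the first slot, I would show that $\D(\R;V)$ is also dense in $H^\alpha(\R;V')$: starting from $u\in H^\alpha(\R;V')$, first approximate by elements of $\D(\R;V')$ using Proposition~\ref{prop:dense}, and then approximate any $\varphi\in\D(\R;V')$ in the $H^\alpha(\R;V')$ norm by functions in $\D(\R;V)$ via the density of $V$ in $V'$ together with a partition-of-unity/mollification argument exploiting the compact range of $\varphi$ in $V'$. With density in hand, passing to the limit in both sides finishes the proof.

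I expect the last density assertion to be the only non-routine ingredient; should it prove awkward, a parallel Fourier-side argument bypasses it by mimicking the proof of Proposition~\ref{prop:isometry} in the Gelfand-triple setting, reducing both sides to $\int_\R C_\alpha\abs{\xi}^{2\alpha}\langle \hat u(\xi),\overline{\hat v(\xi)}\rangle\,\mathrm{d}\xi$ via Plancherel for Hilbert-space-valued functions and a $V'$--$V$ version of Parseval.
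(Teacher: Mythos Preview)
Your proposal is correct. The paper gives no proof for this corollary; it is stated immediately after the polarized identity in $H$ and evidently meant to follow from it. Your density argument works: once the identity holds on a dense subspace and both sides are continuous bilinear forms on $H^\alpha(\R;V')\times H^\alpha(\R;V)$, you are done. The only step that needs care is the density of $\D(\R;V)$ in $H^\alpha(\R;V')$; your sketch for this is fine, and in fact a cleaner variant is to note that the identity already holds on all of $H^\alpha(\R;H)\times H^\alpha(\R;V)$ (apply the polarized identity and use $(x\mid y)_H=\langle x,y\rangle$ for $x\in H$, $y\in V$), so you only need density of $H^\alpha(\R;H)$ in $H^\alpha(\R;V')$, which follows for instance from the strong convergence of the resolvent approximations $n(n+\B)^{-1}\colon V'\to V\subset H$.

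That said, there is an even shorter route that avoids density altogether and is presumably what the paper has in mind. Using the operators from Section~2, one has the identity $\langle f,g\rangle=(\B^{-1/2}f\mid \B^{1/2}g)_H$ for $f\in V'$, $g\in V$, with $\B^{-1/2}\colon V'\to H$ and $\B^{1/2}\colon V\to H$ isometric isomorphisms. Since these operators commute with $\partial^\alpha$, applying the polarized identity in $H$ to $\tilde u:=\B^{-1/2}u\in H^\alpha(\R;H)$ and $\tilde v:=\B^{1/2}v\in H^\alpha(\R;H)$ yields the corollary in one line. Your Fourier-side alternative is essentially a reformulation of the same idea.
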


\begin{proposition}\label{prop:weakderivative}
	Let $u \in L^2(\R;H)$ and $\alpha > 0$. Then $u \in H^\alpha(\R;H)$ if and only if there exists a $v \in L^2(\R;H)$ such that
	\[
		(u \, \vert \, \partial^{\alpha*} \varphi)_{L^2_H} = (v \, \vert \, \varphi)_{L^2_H} \quad (\varphi \in \D(\R;H)).
	\]
\end{proposition}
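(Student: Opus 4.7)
The plan is to reduce everything to the Fourier side via Plancherel, since on that side $\partial^\alpha$ is simply multiplication by the symbol $(i\xi)^\alpha$. As a preliminary step I would note that for $\varphi \in \D(\R;H)$ the Fourier transform $\hat\varphi$ decays faster than any polynomial, so $\overline{(i\xi)^\alpha}\hat\varphi \in L^2(\R;H)$; hence $\partial^{\alpha*}\varphi$ is well defined as the inverse Fourier transform of $\overline{(i\xi)^\alpha}\hat\varphi$, and the identity
\[
    (\partial^\alpha u \, \vert \, \varphi)_{L^2_H} = (u \, \vert \, \partial^{\alpha*}\varphi)_{L^2_H} \quad (u \in H^\alpha(\R;H),\ \varphi \in \D(\R;H))
\]
is immediate from Plancherel applied to both sides.

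For the ``only if'' direction, I would simply set $v := \partial^\alpha u$, which lies in $L^2(\R;H)$ by the definition of $H^\alpha(\R;H)$, and read off the required identity from the display above.

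For the ``if'' direction, suppose $v \in L^2(\R;H)$ satisfies the weak identity. Applying Plancherel to both sides turns it into
\[
    \int_\R (i\xi)^\alpha (\hat u(\xi) \, \vert \, \hat\varphi(\xi))_H \, \mathrm{d}\xi = \int_\R (\hat v(\xi) \, \vert \, \hat\varphi(\xi))_H \, \mathrm{d}\xi \quad (\varphi \in \D(\R;H)).
\]
Since the Fourier transform is a unitary isomorphism of $L^2(\R;H)$ and $\D(\R;H)$ is dense in $L^2(\R;H)$, the set $\{\hat\varphi : \varphi \in \D(\R;H)\}$ is dense in $L^2(\R;H)$, so we may conclude that $(i\xi)^\alpha \hat u(\xi) = \hat v(\xi)$ for almost every $\xi \in \R$. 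In particular $\abs{\xi}^\alpha \hat u(\xi) \in L^2(\R;H)$ with norm equal to $\norm{v}_{L^2_H}$, which means $u \in H^\alpha(\R;H)$ and $\partial^\alpha u = v$.

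The only step requiring care is ensuring that the Fourier side of $\partial^{\alpha*}\varphi$ really is $\overline{(i\xi)^\alpha}\hat\varphi$ for $\varphi \in \D(\R;H)$, and that the density argument is legitimate in the vector-valued setting; both follow from the unitarity of the Fourier transform on $L^2(\R;H)$ together with density of $\D(\R;H)$ in $L^2(\R;H)$. No further obstacle is expected.
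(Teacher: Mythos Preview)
Your proposal is correct and follows essentially the same approach as the paper: apply Plancherel to pass to the Fourier side, then use density of $\{\hat\varphi : \varphi \in \D(\R;H)\}$ in $L^2(\R;H)$ to conclude $(i\xi)^\alpha \hat u = \hat v$. The paper's proof is simply a more compressed version of yours, omitting the explicit treatment of the ``only if'' direction and the preliminary remarks about $\partial^{\alpha*}\varphi$.
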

\begin{proof}
	By Plancherel's theorem we obtain
	\[
		( \hat u \, \vert \, \overline{(i\xi)^\alpha} \hat \varphi)_{L^2_H} = ( \hat v \, \vert \, \hat \varphi)_{L^2_H} \quad (\varphi \in \D(\R;H)).
	\]
	Hence, by the density of $\D(\R;H)$ in $L^2(\R;H)$ it follows that $(i\xi)^\alpha \hat u = \hat v$.
\end{proof}

\begin{lemma}\label{lem:compembedding}
	Let $\alpha \in (0,\frac 1 2]$ and let $2<p< \frac 2{1-2\alpha}$ if $\alpha < \frac 1 2$ and $2<p< \infty$ if $\alpha = \frac 1 2$. 
	Then $H^{\alpha}(\R;H) \hookrightarrow L^p(\R;H)$ and for every $\varepsilon >0$ there exists $c_\varepsilon$ such that
	\begin{equation*}
		\norm{v}_{L^p(\R;H)} \le \varepsilon \norm{\partial^{\alpha} v}_{L^2(\R;H)} + c_\varepsilon\norm{v}_{L^2(\R;H)}
	\end{equation*}
	for all $v \in H^{\alpha}(\R;H)$.
\end{lemma}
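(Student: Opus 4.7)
The plan is to reduce the statement to the classical scalar Slobodeckij--Sobolev embedding by exploiting that the norm on $H$ is $1$-Lipschitz, and then to upgrade the resulting plain embedding to the stated $\varepsilon$-form by a dilation argument. The dilation works precisely in the subcritical range $p<2/(1-2\alpha)$ (resp.\ $p<\infty$ if $\alpha=1/2$), and this is where the range hypothesis enters quantitatively.

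For the embedding itself I would use Proposition~\ref{prop:isometry}, which identifies $H^\alpha(\R;H)$ with $W^{\alpha,2}(\R;H)$ up to the constant $C_\alpha$. Setting $g(t):=\norm{v(t)}_H$, the reverse triangle inequality gives $\abs{g(t)-g(s)}\le\norm{v(t)-v(s)}_H$, hence $g$ is a real-valued function with $\norm{g}_{L^2(\R)}=\norm{v}_{L^2_H}$ and $[g]_{W^{\alpha,2}(\R)}\le[v]_{W^{\alpha,2}(\R;H)}$. Invoking the classical scalar embedding $W^{\alpha,2}(\R)\hookrightarrow L^p(\R)$ (which holds for all $p\in[2,2/(1-2\alpha)]$ when $\alpha<1/2$ and for all $p\in[2,\infty)$ when $\alpha=1/2$) and noting that $\norm{v}_{L^p(\R;H)}=\norm{g}_{L^p(\R)}$ yields
\[
	\norm{v}_{L^p(\R;H)} \le C \bigl(\norm{v}_{L^2_H} + \norm{\partial^\alpha v}_{L^2_H}\bigr).
\]

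To pass to the $\varepsilon$-form I would apply this inequality to the dilation $v_\lambda(t):=v(\lambda t)$ for $\lambda>0$. A change of variables (together with Plancherel for the homogeneous term) gives the scaling
\[
	\norm{v_\lambda}_{L^p_H}=\lambda^{-1/p}\norm{v}_{L^p_H},\quad \norm{v_\lambda}_{L^2_H}=\lambda^{-1/2}\norm{v}_{L^2_H},\quad \norm{\partial^\alpha v_\lambda}_{L^2_H}=\lambda^{\alpha-1/2}\norm{\partial^\alpha v}_{L^2_H}.
\]
Substituting back turns the plain embedding into
\[
	\norm{v}_{L^p_H}\le C\bigl(\lambda^{1/p-1/2}\norm{v}_{L^2_H}+\lambda^{1/p+\alpha-1/2}\norm{\partial^\alpha v}_{L^2_H}\bigr)
\]
valid for every $\lambda>0$. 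The range condition on $p$ is exactly $1/p+\alpha-1/2>0$, so for any prescribed $\varepsilon>0$ I may pick $\lambda$ small so that $C\lambda^{1/p+\alpha-1/2}=\varepsilon$, and set $c_\varepsilon:=C\lambda^{1/p-1/2}$; the fact that $1/p-1/2<0$ reflects the expected blow-up of $c_\varepsilon$ as $\varepsilon\to 0$.

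The only nonroutine ingredient is the subcritical scalar Slobodeckij--Sobolev embedding, which is classical and will simply be cited; everything else is bookkeeping and a one-parameter scaling. The main conceptual point, and essentially the only place where the hypothesis on $p$ is used, is the positivity of the exponent $1/p+\alpha-1/2$, which allows the dilation to absorb the high-derivative term into $\varepsilon$ while loading the cost onto the $L^2$ term.
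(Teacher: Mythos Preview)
Your argument is correct, but it is genuinely different from the paper's. The paper works entirely on the Fourier side: it applies the (Hilbert-space-valued) Hausdorff--Young inequality $\norm{\hat{\hat v}}_{L^p_H}\le c_p\norm{\hat v}_{L^{p'}_H}$, then splits $\hat v$ by H\"older with the weight $(\rho^{1/\alpha}+\abs{\xi})^{\pm\alpha}$; the parameter $\rho$ plays the role your dilation $\lambda$ plays, and letting $\rho\to\infty$ yields the $\varepsilon$-form directly. Your route instead passes through the scalar function $g=\norm{v(\cdot)}_H$, uses the reverse triangle inequality to transfer the Gagliardo seminorm, cites the scalar Slobodeckij embedding as a black box, and then recovers the $\varepsilon$-form by rescaling. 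What your approach buys is that it avoids vector-valued Hausdorff--Young altogether and makes the role of subcriticality ($1/p+\alpha-1/2>0$) completely transparent via homogeneity; what the paper's approach buys is that it is self-contained (no external embedding is cited) and produces an explicit dependence of $c_\varepsilon$ on $\varepsilon$ in one stroke.
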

\begin{proof}
	Let $p'$ be such that $1=\frac 1 p + \frac 1 {p'}$ and $q$ such that $\frac1{p'}=\frac 1 2+ \frac 1 q$. 
	Note that $p' = \frac p{p-1}$ and $q=\frac{2}{1-2/p} > \frac 1 \alpha$.
	For $\rho >0$ and $v \in H^{\alpha}(\R;H)\cap L^p(\R;H)$ we have
	\begin{multline*}
		\norm{v}_{L^p(\R;H)} = \norm{\hat {\hat{v}}}_{L^p(\R;H)} \le c_p \norm{\hat{v}}_{L^{p'}(\R;H)} \\
		\le \norm{(\rho^{1/\alpha} + \abs\cdot)^{-\alpha}}_{L^{q}(\R)}   \norm{(\rho^{1/\alpha} + \abs\cdot)^{\alpha}\hat{v}}_{L^{2}(\R;H)}\\
		\le \left(\frac{2}{q\alpha -1}\right)^{1/q} \rho^{\frac{1}{q\alpha}-1} \left(\rho\norm{v}_{L^{2}(\R;H)}+ \norm{\partial^{\alpha} v}_{L^2(\R;H)} \right).
	\end{multline*}
	This estimate proves the claim for $\rho$ sufficiently large.
\end{proof}

Let $I \subset \R$ be an interval.
We define $H^\alpha(I;H) := \{f\vert_I : f \in H^\alpha(\R;H) \}$ with
\[
	\norm{f}_{H^\alpha(I;H)} := \inf  \{ \norm{g}_{H^\alpha(\R; H)} : g \in H^\alpha(\R;H),\, g\vert_I =f \}.
\]
Since $H^\alpha(I;H)$ is isometric isomorphic to the quotient space $H^\alpha(\R; H) / \{f \in H^\alpha(\R; H) : f \vert_I = 0 \}$ it is also a Hilbert space.
Furthermore, by Proposition~\ref{prop:extension} and Proposition~\ref{prop:isometry} we have $H^\alpha(I;H) = W^{\alpha,2}(I;H)$ with equivalent norms for $\alpha \in (0,1)$.

\def\cprime{$'$}
\providecommand{\bysame}{\leavevmode\hbox to3em{\hrulefill}\thinspace}

\noindent
\emph{Dominik Dier}, \emph{Rico Zacher}, Institute of Applied Analysis, 
University of Ulm, 89069 Ulm, Germany,
\texttt{dominik.dier@uni-ulm.de}, \texttt{rico.zacher@uni-ulm.de}


\begin{thebibliography}{ABHN11}

\bibitem[ABHN11]{ABHN11}
Wolfgang Arendt, Charles J.~K. Batty, Matthias Hieber, and Frank Neubrander,
  \emph{Vector-valued {L}aplace transforms and {C}auchy problems}, second ed.,
  Monographs in Mathematics, vol.~96, Birkh\"auser/Springer Basel AG, Basel,
  2011.

\bibitem[ACFP07]{ACFP07}
Wolfgang Arendt, Ralph Chill, Simona Fornaro, and C{\'e}sar Poupaud,
  \emph{{$L^p$}-maximal regularity for non-autonomous evolution equations}, J.
  Differential Equations \textbf{237} (2007), no.~1, 1--26.

\bibitem[ADLO14]{ADLO14}
Wolfgang Arendt, Dominik Dier, Hafida {Laasri}, and El~Maati Ouhabaz,
  \emph{{Maximal regularity for evolution equations governed by
  non-auton\-omous forms}}, Advances in Differential Equations \textbf{19}
  (2014), no.~11-12, 1043--1066.

\bibitem[ADO14]{ADO14}
Wolfgang Arendt, Dominik Dier, and El~Maati Ouhabaz, \emph{Invariance of convex
  sets for non-autonomous evolution equations governed by forms}, J. Lond.
  Math. Soc. (2) \textbf{89} (2014), no.~3, 903--916.

\bibitem[AM14]{AM14}
W.~{Arendt} and S.~{Monniaux}, \emph{{Maximal regularity for non-autonomous
  Robin boundary conditions}}, ArXiv e-prints (2014).

\bibitem[Ama97]{Ama97}
Herbert Amann, \emph{Operator-valued {F}ourier multipliers, vector-valued
  {B}esov spaces, and applications}, Math. Nachr. \textbf{186} (1997), 5--56.

\bibitem[Ama04]{Ama04}
\bysame, \emph{Maximal regularity for nonautonomous evolution equations}, Adv.
  Nonlinear Stud. \textbf{4} (2004), no.~4, 417--430.

\bibitem[AT03]{AT03}
P.~Auscher and Ph. Tchamitchian, \emph{Square roots of elliptic second order
  divergence operators on strongly {L}ipschitz domains: {$L^2$} theory}, J.
  Anal. Math. \textbf{90} (2003), 1--12.

\bibitem[Die14]{Die14}
Dominik Dier, \emph{Non-autonomous cauchy problems governed by forms: maximal
  regularity and invariance}, Ph.D. thesis, 2014.

\bibitem[Die15]{Die15}
\bysame, \emph{Non-autonomous maximal regularity for forms of bounded
  variation}, J. Math. Anal. Appl. \textbf{425} (2015), 33--54.

\bibitem[DL92]{DL92}
Robert Dautray and Jacques-Louis Lions, \emph{Mathematical analysis and
  numerical methods for science and technology. {V}ol. 5}, Springer-Verlag,
  Berlin, 1992.

\bibitem[DU77]{DU77}
J.~Diestel and J.~J. Uhl, Jr., \emph{Vector measures}, American Mathematical
  Society, Providence, R.I., 1977, With a foreword by B. J. Pettis,
  Mathematical Surveys, No. 15.

\bibitem[GM05]{GM05}
Mariano Giaquinta and Luca Martinazzi, \emph{An introduction to the regularity
  theory for elliptic systems, harmonic maps and minimal graphs}, Appunti.
  Scuola Normale Superiore di Pisa (Nuova Serie) [Lecture Notes. Scuola Normale
  Superiore di Pisa (New Series)], vol.~2, Edizioni della Normale, Pisa, 2005.

\bibitem[HO15]{HO14}
Bernhard~H. Haak and El~Maati Ouhabaz, \emph{Maximal regularity for
  non-autonomous evolution equations}, Math. Ann. \textbf{363} (2015),
  1117--1145.

\bibitem[Kat61]{Kat61}
Tosio Kato, \emph{Fractional powers of dissipative operators}, J. Math. Soc.
  Japan \textbf{13} (1961), 246--274.

\bibitem[Lio61]{Lio61}
Jacques-Louis Lions, \emph{\'{E}quations diff\'erentielles op\'erationnelles et
  probl\`emes aux limites}, Die Grundlehren der mathematischen Wissenschaften,
  Bd. 111, Springer-Verlag, Berlin-G\"ottingen-Heidelberg, 1961.

\bibitem[Lun09]{Lun09}
Alessandra Lunardi, \emph{Interpolation theory}, second ed., Appunti. Scuola
  Normale Superiore di Pisa (Nuova Serie). [Lecture Notes. Scuola Normale
  Superiore di Pisa (New Series)], Edizioni della Normale, Pisa, 2009.

\bibitem[MS12]{MS12}
Martin Meyries and Roland Schnaubelt, \emph{Interpolation, embeddings and
  traces of anisotropic fractional {S}obolev spaces with temporal weights}, J.
  Funct. Anal. \textbf{262} (2012), no.~3, 1200--1229.

\bibitem[OS10]{OS10}
El~Maati Ouhabaz and Chiara Spina, \emph{Maximal regularity for non-autonomous
  {S}chr\"odinger type equations}, J. Differential Equations \textbf{248}
  (2010), no.~7, 1668--1683.

\bibitem[Ouh05]{Ouh05}
El~Maati Ouhabaz, \emph{Analysis of heat equations on domains}, London
  Mathematical Society Monographs Series, vol.~31, Princeton University Press,
  Princeton, NJ, 2005.

\bibitem[Ouh15]{Ouh14}
\bysame, \emph{Maximal regularity for non-autonomous evolution equations
  governed by forms having less regularity}, Arch. Math. (Basel) \textbf{105}
  (2015), no.~1, 79--91.

\bibitem[PS01]{SP01}
Jan Pr{\"u}ss and Roland Schnaubelt, \emph{Solvability and maximal regularity
  of parabolic evolution equations with coefficients continuous in time}, J.
  Math. Anal. Appl. \textbf{256} (2001), no.~2, 405--430.

\bibitem[PSS07]{PSS07}
Jan Pr{\"u}ss, J{\"u}rgen Saal, and Gieri Simonett, \emph{Existence of analytic
  solutions for the classical {S}tefan problem}, Math. Ann. \textbf{338}
  (2007), no.~3, 703--755.

\bibitem[Sho97]{Sho97}
Ralph~E. Showalter, \emph{Monotone operators in {B}anach space and nonlinear
  partial differential equations}, Mathematical Surveys and Monographs,
  vol.~49, American Mathematical Society, Providence, RI, 1997.

\bibitem[Sim90]{Sim90}
Jacques Simon, \emph{Sobolev, {B}esov and {N}ikol\cprime ski\u\i\ fractional
  spaces: imbeddings and comparisons for vector valued spaces on an interval},
  Ann. Mat. Pura Appl. (4) \textbf{157} (1990), 117--148.

\bibitem[Tri95]{Tri95}
Hans Triebel, \emph{Interpolation theory, function spaces, differential
  operators}, second ed., Johann Ambrosius Barth, Heidelberg, 1995.

\bibitem[Zac05]{Zac05}
Rico Zacher, \emph{Maximal regularity of type {$L_p$} for abstract parabolic
  {V}olterra equations}, J. Evol. Equ. \textbf{5} (2005), no.~1, 79--103.

\end{thebibliography}
\end{document}